\let\dsp=\displaystyle
\newcommand{\N}{\mathbb{N}}
\newcommand{\R}{\mathbb{R}}
\newcommand{\eps}{\varepsilon}
\newcommand{\Leb}{{\mathscr L}}
\newcommand{\ind}{{\bf 1}\,}
\renewcommand{\div}{{\rm div}\,}
\newcommand{\curl}{{\rm curl}\,}
\newcommand{\D}{\mathcal{D}}
\newcommand{\loc}{{\rm loc}}
\def\mint
\newcommand{\spt}{{\rm spt}\,}
\newcommand{\lpn}{\left|\left|\left|}
\newcommand{\rpn}{\right|\right|\right|}
\renewcommand{\S}{\mathscr{S}}
\newtheorem{theorem}{Theorem}[section]
\newtheorem{lemma}[theorem]{Lemma}
\newtheorem{propos}[theorem]{Proposition}
\newtheorem{corol}[theorem]{Corollary}
\theoremstyle{definition}
\newtheorem{definition}[theorem]{Definition}
\theoremstyle{remark}
\newtheorem{remark}[theorem]{Remark}
\numberwithin{equation}{section}
\begin{document}
\title[Vector fields with gradient given by a singular integral]{Lagrangian flows for vector fields with 
gradient \\ given by a singular integral}

\author{Fran\c{c}ois Bouchut}
\address{Fran\c{c}ois Bouchut, LAMA, CNRS \& Universit\'e Paris-Est-Marne-la-Vall\'ee, 
5 boulevard Descartes, Cit\'e Descartes - Champs-sur-Marne,
77454 Marne-la-Vall\'ee cedex 2, France}
\email{francois.bouchut@univ-mlv.fr} 
\author{Gianluca Crippa}
\address{Gianluca Crippa, Departement Mathematik und Informatik, Universit\"at Basel, 
Rheinsprung 21, CH-4051, Basel, Switzerland}
\email{gianluca.crippa@unibas.ch}

\begin{abstract} We prove quantitative estimates on flows of ordinary differential equations with vector field with gradient given by a singular integral of an $L^1$ function. Such estimates allow to prove existence, uniqueness, quantitative stability and compactness for the flow, going beyond the $BV$ theory. We illustrate the related well-posedness theory of Lagrangian solutions to the continuity and transport equations.
\end{abstract}

\maketitle

\section{Introduction}

\subsection{Ordinary differential equations with non smooth vector field}

When $b : [0,T] \times \R^N \to \R^N$ is a bounded smooth vector field, the flow of $b$ is the smooth map $X : [0,T] \times \R^N \to \R^N$ satisfying
\begin{equation}\label{e:introode}
\begin{cases}
\displaystyle \frac{dX}{ds}(s,x) = b \big( s,X(s,x)\big) \,, \quad s\in[0,T], \\ \\
X(0,x) = x \,.
\end{cases}
\end{equation}
The possibility of going beyond the smooth framework \eqref{e:introode} has been studied in recent years by several authors. In the non smooth context, a convenient notion of generalized flow is that of regular Lagrangian flow. Roughly speaking, it amounts to requiring that \eqref{e:introode} is satisfied (in weak sense) for almost every $x \in \R^N$, and that for any time $s\in [0,T]$ the map $X(s,\cdot) : \R^N \to \R^N$ is almost preserving (lower bounds on) the Lebesgue measure of sets (see Definition~\ref{d:rlf} for the precise conditions).

Existence, uniqueness and stability of regular Lagrangian flows have been first proved by DiPerna and Lions \cite{DL} for Sobolev vector fields with bounded divergence. Such result was later extended by Ambrosio \cite{A} to $BV$ vector fields with bounded divergence. 
The argument in both proofs is quite indirect and exploits the connection between \eqref{e:introode} and the Cauchy problem for the continuity equation
\begin{equation}\label{e:intropde}
\partial_t u(t,x) + \div \big( b(t,x) u(t,x) \big) = 0\,,
\end{equation}
together with the theory of renormalized solutions for \eqref{e:intropde}.
In this approach, an important technical tool is the regularization by a smooth kernel and its commutator with the transport operator.
For a detailed account on these results, we suggest for instance \cite{notes,notes2,thesis,D}. Further results can be found in \cite{ALM,ADM,BC,hyp}.

\subsection{Quantitative estimates for Sobolev vector fields}\label{ss:sob}

In \cite{estimates} it was shown that many of the ODE results of the DiPerna-Lions theory can be recovered with simple a priori estimates and functional inequalities, directly in Lagrangian formulation, that is without exploiting the connection with the continuity equation~\eqref{e:intropde}. 

The basic idea of \cite{estimates} is to consider an integral functional measuring the distance between two eventual regular Lagrangian flows. If $X$ and $\bar X$ are regular Lagrangian flows associated to the same vector field $b$, given $\delta >0$ we introduce the quantity
\begin{equation}
	\Phi_\delta (s) = \int \log \left( 1 + \frac{|X(s,x) - \bar X(s,x)|}{\delta} \right) \, dx
	\label{e:Phidelta}
\end{equation}
(suitable truncations or localizations are necessary to make this integral convergent, but for simplicity we skip such technical point in this introductory presentation).

If uniqueness fails, it is possible to find a time $s$ and a set $A \subset \R^N$ with $\Leb^N(A) = \alpha >0$ such that $|X(s,x) - \bar X(s,x)| \geq \gamma >0$ for $x\in A$, implying the lower bound
\begin{equation}\label{e:condition}
\Phi_\delta (s) \geq \int_A \log \left( 1 + \frac{\gamma}{\delta} \right) \, dx = 
\alpha \log \left( 1 + \frac{\gamma}{\delta} \right) \,.
\end{equation}
However, time differentiation of $\Phi_\delta$ gives
\begin{equation}\label{e:differentiate}
\Phi'_\delta (s) \leq \int \frac{|b(X) - b (\bar X)|}{\delta + |X - \bar X|} \, dx \leq
\int \min \left\{ \frac{2 \|b\|_\infty}{\delta} \; ; \; \frac{|b(X) - b(\bar X)|}{|X - \bar X|} \right\} \, dx \,.
\end{equation}
The key remark of \cite{estimates} is that the estimate of the difference quotients in terms of the maximal function
\begin{equation}\label{e:quotients}
\frac{|b(x)-b(y)|}{|x-y|} \leq C \big( M Db(x) + M Db(y) \big) 
\quad \text{ for a.e.~$x, y \in \R^N$}
\end{equation}
allows to conclude from \eqref{e:differentiate} (in which we simply drop the first element in the minimum) that
\begin{equation}\label{e:tobebound}
\Phi'_\delta (s) \leq C \int \big( M Db (X) + M Db (\bar X) \big) \, dx \leq
C \int M Db \, dx \,,
\end{equation}
changing variable along the flow. Recalling that the maximal function enjoys the strong estimate 
\begin{equation}\label{e:strong}
\| M Db \|_{L^p} \leq C \| Db \|_{L^p} \quad \text{for any $p>1$,}
\end{equation}
we see that in the case $b \in W^{1,p}$ with $p>1$ the estimate \eqref{e:tobebound} gives an upper bound on $\Phi'_\delta$ (and hence on $\Phi_\delta$) uniformly with respect to $\delta$. But this is in contrast with the non uniqueness lower bound \eqref{e:condition} when $\delta\rightarrow 0$, hence we obtain uniqueness of the regular Lagrangian flow for vector fields in $W^{1,p}$ with $p>1$.

In the same fashion, estimates providing stability (with quantitative rates), compactness and some mild regularity of the flow have been obtained in \cite{estimates}. 

\subsection{The case when the gradient is a singular integral: sketch of the proof of uniqueness}\label{ss:sketch}

The proof of \cite{estimates} breaks down when $p=1$, since the strong estimate \eqref{e:strong} does not hold any more and only the weak estimate
\begin{equation}\label{e:weak}
\lpn M Db \rpn_{M^1} :=
\sup_{\lambda>0} \Big\{ \lambda \, \Leb^N \big( \big\{ x \in \R^N \; : \; |M Db (x)| > \lambda \big\} \big) \Big\}
\leq C \| Db \|_{L^1}
\end{equation}
is available (see Section~\ref{s:back} for a description of the space $M^1$). In general $M Db$ does not even belong to $L^1_\loc$, hence the integral in \eqref{e:tobebound} is not finite any more. This was the obstruction for the strategy of \cite{estimates} to reach the cases $W^{1,1}$ and $BV$.

\medskip

In the present paper we show how it is possible to modify the proof of \cite{estimates} in order to get the $W^{1,1}$ case, and indeed the same idea allows the treatment of vector fields whose derivative can be expressed as a singular integral of an $L^1$ function, that is
\begin{equation}
	Db = K * g \,,\quad g\in L^1,
	\label{e:Db=SL1}
\end{equation}
where the singular kernel $K$ is smooth out of the origin, grows at most as $|x|^{-N}$ in $\R^N$, and satisfies suitable cancellation properties (see Section~\ref{s:back} for the definition of singular integrals, and Section~\ref{s:estimate} for the precise description of the class of vector fields we are considering). Here we just notice that this class is natural in the context of the study of some nonlinear PDEs (see Section~\ref{ss:extapp} for some detail), and that such class is not contained in $BV$, neither contains it.
Nevertheless, \eqref{e:Db=SL1} contains the regularity $W^{1,1}$ (take $K=\delta_0$), already considered in \cite{Jab} (see also \cite{ChJ}).
Our main results were announced in \cite{BCX}.

\medskip

We now informally describe the key steps of our proof. Remember that in the $L^1$ context singular integrals do not enjoy strong estimates, but only the weak estimate analogue to \eqref{e:weak}. When looking at the estimate for the difference quotients \eqref{e:quotients}, we realize that the composition of the two operators (maximal function and singular integral) is involved: the quantity $M ( K*g)$ appears on the right hand side. In order to preserve in this composition the weak estimate (which holds separately for the maximal function and for the singular integral), it is convenient to consider a smooth variant of the usual maximal function: given $\rho \in C^\infty_c(\R^N)$ we set
\begin{equation}
	M_\rho (u) (x) = \sup_{\varepsilon>0} \left| \frac{1}{\varepsilon^N} \int_{\R^N} \rho\left( \frac{x-y}{\varepsilon} \right) u(y) \, dy \right| \,.
	\label{e:Mrho}
\end{equation}
We are now taking smooth averages, and we are taking the absolute value after having computed the average. This allows cancellations which play together with the cancellations of the singular kernel~$K$. With this definition, we can deduce that the composition satisfies the weak estimate
\begin{equation}\label{e:weak2}
\lpn M_\rho (K * g) \rpn_{M^1} := 
\sup_{\lambda>0} \Big\{ \lambda \, \Leb^N \big( \big\{ x \in \R^N 
\; : \; |M_\rho (K * g) (x)| > \lambda \big\} \big) \Big\}
\leq C\| g \|_{L^1}
\end{equation}
(which cannot be obtained simply by composing the two weak estimates), and we still have the analogue of \eqref{e:quotients}, which now reads for some $\rho$
\begin{equation}\label{e:quotients2}
\frac{|b(x)-b(y)|}{|x-y|} \leq C \big( M_\rho (K * g) (x) + M_\rho (K * g) (y) \big) 
\quad \text{ for a.e.~$x, y \in \R^N$.}
\end{equation}
Going back to \eqref{e:differentiate}, we see that now we have to estimate the integral of the minimum of two functions: the first one is $L^\infty$, but with a norm which grows when $\delta\rightarrow 0$, while the $M^1$ (pseudo)norm of the second one is bounded by $\|g\|_{L^1}$. None of the bounds by itself is sufficient, but an interpolation inequality allows to conclude
\begin{equation}\label{e:critical}
\Phi'_\delta (s) \leq C \|g\|_{L^1} \left[ 1 + \log \left( \frac{C}{\delta \|g\|_{L^1}} \right) \right] \,.
\end{equation}
Recalling \eqref{e:condition}, we discover that we are exactly on the critical rate for the uniqueness: both the lower and the upper bounds behave like $\log (1/\delta)$ for $\delta$ small. By itself, \eqref{e:critical} is not sufficient to imply uniqueness. But $g\in L^1$ can be decomposed as $g = g^1 + g^2$, where the $L^1$ norm of $g^1$ is as small as desired, and $g^2 \in L^2$. We apply to $g^2$ the arguments of the $W^{1,p}$ case (as in Section~\ref{ss:sob}), and we are left with $g^1$ instead of $g$ in \eqref{e:critical}: choosing its $L^1$ norm small enough, we obtain a contradiction with \eqref{e:condition}, deducing uniqueness also in this case.

\subsection{Consequences, applications and possible extensions}\label{ss:extapp}

The approach we have just presented allows a complete theory of regular Lagrangian flows for vector fields whose gradient is given by a sum of singular integrals of $L^1$ functions: existence, uniqueness, quantitative stability and compactness can be proved. This gives a well-posed notion of flow, which satisfies the usual semigroup property.

\medskip

Being our derivation performed at ODE level, no direct consequences are available for distributional solutions of the continuity equation \eqref{e:intropde} (or for the closely related transport equation). However, the ODE well-posedness brings as consequence the well-posedness for Lagrangian solutions to such PDEs, that is, for solutions which are naturally associated to flows. This enables us to construct a unique stable semigroup of PDE solutions.

\medskip

The class of vector fields we are considering is natural in view of some applications to nonlinear PDEs. As an example, the two-dimensional Euler equation in vorticity form reads
$$
\partial_t \omega + \div (v \, \omega) = 0 \,,
$$
where the vorticity $\omega$ is the rotational of the velocity $v$, that is, $\omega = \curl v$. This can be equivalently rewritten through the Biot-Savart law as
$$
v(t,x) = \frac{1}{2\pi} \int_{\R^2} \frac{(x-y)^\perp}{|x-y|^2} \, \omega (t,y) \, dy \,.
$$
If we look at vorticities $\omega \in L^\infty_t (L^1_x)$, the velocity $v$ is precisely in the setting under consideration in this paper. A standard smoothing procedure, together with the compactness of the associated ODE flows, imply existence of Lagrangian solutions for the Euler equation. This will be contained in the follow-up paper \cite{BC2}, together with similar applications to the Vlasov-Poisson equation. 

\medskip

A relevant case to be understood is that of singular integrals of measures, rather than of $L^1$ functions. Looking back at the streamline of the proof in Section~\ref{ss:sketch}, we see that we can carry on our analysis with $g$ being a measure until \eqref{e:critical}. Integrability of $g$ is just needed in the subsequent decomposition $g = g^1 + g^2$, in order to gain the required smallness. A full extension of our proof to the case of a measure would imply a proof of Bressan's compactness conjecture \cite{BRc,BRm}, together with other deep consequences.
There are however limitations to such theory, because of counterexamples given by Bressan \cite{BRc} and Depauw \cite{Depauw}.
In a next paper \cite{BC3} we plan to consider an intermediate case, in which space coordinates are split into different groups: along some of them the derivative is the singular integral of a measure, along the remaining of an $L^1$ function. Further implications for the Vlasov-Poisson equation, in the same spirit as in \cite{BC2}, will follow.

\subsection{Plan of the paper}

In Section~\ref{s:back} we introduce some background material about weak Lebesgue spaces, maximal functions, and singular integrals, together with an interpolation lemma. In Section~\ref{Cancellations} we introduce the smooth maximal function and we exploit the cancellation properties to prove the weak estimate for the composition of the smooth maximal function with the singular integral. Section~\ref{s:diffq} is devoted to the proof of the fact that the smooth maximal function is suited for the estimate of the difference quotients, as the usual maximal function. The core estimate is contained in Section~\ref{s:estimate}: after having introduced the notion of regular Lagrangian flow and having described its main properties, we describe the class of vector fields we are interested in and we prove the estimate for the integral functional $\Phi_\delta(s)$ (see Proposition~\ref{l:main}). Section~\ref{s:consequences} presents the corollaries of such estimate: existence, uniqueness, quantitative stability and compactness. The forward and backward flows and the Jacobian are studied. Finally, Section~\ref{s:lagrangian} contains the theory of Lagrangian solutions to the continuity and transport equations.

\subsection{About the value of the constants} In all the paper, we denote by $C$ a generic constant, whose value may vary from line to line. In particular cases, when we want to underline the dependence of the constant on relevant parameters, we use subscripts like for instance $C_N$, $C_p$ or $C_{N,p}$.

\section{Background material}\label{s:back}

This section is devoted to some classical estimates of harmonic analysis and few extensions.
Many proofs can be found in \cite{Stein}.

\subsection{Weak Lebesgue spaces}
We recall here the definition of the weak Lebesgue spaces $M^p(\Omega)$,
which are also known in the literature as Lorentz spaces, Marcinkiewicz spaces, and alternatively denoted by
$L^{p,\infty}(\Omega)$ or $L^p_w(\Omega)$.

As usual, $\Leb^N$ is the $N$-dimensional Lebesgue measure, and we shall denote by $B_R$ the ball of radius $R$ in $\R^N$ centered at the origin.

\begin{definition} Let $u$ be a measurable function defined
on an open set $\Omega \subset \R^N$. For any $1 \leq p < \infty$ we set
\begin{equation}\label{e:pn}
\lpn u \rpn_{M^p(\Omega)}^p = \sup_{\lambda >0} \Big\{ \lambda^p \Leb^N \big( \{ x \in \Omega \; : \; |u(x)| > \lambda \} \big) \Big\},
\end{equation}
and we define the {\em weak Lebesgue space} $M^p(\Omega)$ as the space consisting of all measurable functions $u : \Omega \to \R$ with $\lpn u \rpn_{M^p(\Omega)} < +\infty$. By convention, for $p=\infty$ we simply set $M^\infty(\Omega)=L^\infty(\Omega)$.
\end{definition}
In contrast to the case of Lebesgue spaces $L^p(\Omega)$, it happens that $\lpn \cdot \rpn_{M^p(\Omega)}$ is not subadditive and thus it is not a norm, hence $M^p(\Omega)$ is not a Banach space
(for this reason, we have chosen the notation $\lpn \cdot \rpn_{M^p(\Omega)}$ with three vertical bars, different
from the usual one for the norm). Nevertheless, the following inequality holds
$$ \lpn u+v \rpn_{M^p(\Omega)}^{p/(p+1)} \leq \lpn u \rpn_{M^p(\Omega)}^{p/(p+1)} + \lpn v \rpn_{M^p(\Omega)}^{p/(p+1)}, $$
and it implies in particular 
\begin{equation}\label{e:care}
\lpn u+v \rpn_{M^p(\Omega)} \leq C_p \left( \lpn u \rpn_{M^p(\Omega)} + \lpn v \rpn_{M^p(\Omega)} \right).
\end{equation}
Since for every $\lambda >0$
$$
\lambda^p \Leb^N \big( \{ x \in \Omega \; : \; |u(x)| > \lambda \} \big)
= \int_{|u|>\lambda} \lambda^p \, dx
\leq \int_{|u|>\lambda} |u(x)|^p \, dx \leq \| u \|_{L^p(\Omega)}^p,
$$
the inclusion $L^p(\Omega) \subset M^p(\Omega)$ holds, and in particular we have $\lpn u \rpn_{M^p(\Omega)} \leq \| u \|_{L^p(\Omega)}$.
This inclusion is however strict: for instance, the function $1/x$ defined on $]0,1[$ belongs to $M^1$ but not to $L^1$.

In the following lemma we show that we can interpolate $M^1$ and $M^p$,
for $p>1$, obtaining a bound on the $L^1$ norm, depending only logarithmically
on the $M^p$ norm.
\begin{lemma}
Let $u : \Omega \to [0,+\infty[$ be a nonnegative measurable function, where $\Omega \subset \R^N$ has finite measure. Then for every $1<p<\infty$ we have the interpolation estimate
\begin{equation}\label{e:interp}
\| u \|_{L^1(\Omega)} \leq  \frac{p}{p-1} \lpn u \rpn_{M^1(\Omega)} \left[ 1 +
\log \left( \frac{ \lpn u\rpn_{M^p(\Omega)}}{\lpn u \rpn_{M^1(\Omega)}} \Leb^N (\Omega)^{1-\frac{1}{p}} \right) \right]\,,
\end{equation}
and analogously for $p=\infty$
\begin{equation}\label{e:interpinfty}
\| u \|_{L^1(\Omega)} \leq  \lpn u \rpn_{M^1(\Omega)} \left[ 1 +
\log \left( \frac{ \lpn u\rpn_{L^\infty(\Omega)}}{\lpn u \rpn_{M^1(\Omega)}} \Leb^N (\Omega) \right) \right]\,.
\end{equation}
\label{Lemma InterpM1}
\end{lemma}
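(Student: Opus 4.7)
The plan is to use the layer-cake representation
$$\|u\|_{L^1(\Omega)} = \int_0^\infty \Leb^N(\{u > \lambda\})\, d\lambda,$$
and to exploit three distinct pointwise bounds on the distribution function: namely, the trivial bound $\Leb^N(\{u>\lambda\}) \leq \Leb^N(\Omega)$, the $M^1$ bound $\Leb^N(\{u>\lambda\}) \leq \lpn u \rpn_{M^1(\Omega)}/\lambda$, and the $M^p$ bound $\Leb^N(\{u>\lambda\}) \leq \lpn u \rpn_{M^p(\Omega)}^p/\lambda^p$. The first is best for small $\lambda$, the second for intermediate $\lambda$, and the third for large $\lambda$. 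Integrating the minimum of the three over $[0,\infty)$ will produce the announced estimate.

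I would first identify the two natural crossover thresholds
$$\lambda_0 = \frac{\lpn u \rpn_{M^1(\Omega)}}{\Leb^N(\Omega)}, \qquad
\lambda_1 = \left( \frac{\lpn u \rpn_{M^p(\Omega)}^p}{\lpn u \rpn_{M^1(\Omega)}} \right)^{1/(p-1)},$$
at which consecutive bounds coincide, and check that $\lambda_0 \le \lambda_1$. This reduces to $\lpn u \rpn_{M^1(\Omega)} \le \lpn u \rpn_{M^p(\Omega)} \Leb^N(\Omega)^{1-1/p}$, an elementary consequence of maximizing $\lambda \min(\Leb^N(\Omega), \lpn u \rpn_{M^p}^p/\lambda^p)$ in $\lambda$. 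So the split over $[0,\lambda_0] \cup [\lambda_0,\lambda_1] \cup [\lambda_1,\infty)$ is non-degenerate.

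I then integrate on each piece using the best available bound:
$$\int_0^{\lambda_0} \Leb^N(\Omega)\, d\lambda = \lpn u \rpn_{M^1(\Omega)}, \qquad
\int_{\lambda_0}^{\lambda_1} \frac{\lpn u \rpn_{M^1(\Omega)}}{\lambda}\, d\lambda = \lpn u \rpn_{M^1(\Omega)} \log\!\left(\frac{\lambda_1}{\lambda_0}\right),$$
and
$$\int_{\lambda_1}^\infty \frac{\lpn u \rpn_{M^p(\Omega)}^p}{\lambda^p}\, d\lambda = \frac{\lpn u \rpn_{M^p(\Omega)}^p}{(p-1)\lambda_1^{p-1}} = \frac{\lpn u \rpn_{M^1(\Omega)}}{p-1}.$$
Adding these three and using $1 + \frac{1}{p-1} = \frac{p}{p-1}$, together with
$$\frac{\lambda_1}{\lambda_0} = \left(\frac{\lpn u \rpn_{M^p(\Omega)}}{\lpn u \rpn_{M^1(\Omega)}}\right)^{p/(p-1)} \Leb^N(\Omega),$$
yields precisely the right-hand side of \eqref{e:interp}.

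For the case $p=\infty$ handled in \eqref{e:interpinfty}, the third interval is simply replaced by $[\lambda_0, \|u\|_{L^\infty(\Omega)}]$ (the integrand vanishes beyond $\|u\|_{L^\infty(\Omega)}$), so only the first two contributions survive and there is no leftover $1/(p-1)$ term; the same computation then gives the stated form. The only point requiring care is the verification $\lambda_0 \le \lambda_1$; once that is in hand, everything is routine integration.
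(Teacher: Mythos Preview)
Your proposal is correct and follows essentially the same approach as the paper: the layer-cake formula, the same three bounds on the distribution function, the same two thresholds (your $\lambda_0,\lambda_1$ are the paper's $\alpha,\beta$), the same verification that $\lambda_0\le\lambda_1$, and the same three integrals summed to give the stated bound. The treatment of $p=\infty$ also matches.
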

\begin{proof}
Setting
$$ m(\lambda) = \Leb^N \big( \{ u>\lambda \} \cap \Omega \big)\,, $$
we have the identity
\begin{equation}\label{e:lev1}
\| u \|_{L^1(\Omega)} = \int_0^{\infty} m(\lambda) \, d\lambda \,.
\end{equation}
Assume first that $p<\infty$. For every $\lambda > 0$, there holds
$$ m(\lambda) \leq \Leb^N(\Omega) \qquad \text{ and } \qquad m(\lambda) \leq \frac{ \lpn u \rpn_{M^p(\Omega)}^p}{\lambda^p} \,, $$
where the second estimate immediately follows from \eqref{e:pn}.
According to this, we now split the integral in \eqref{e:lev1}
in three parts. Let us set
$$ \alpha = \frac{\lpn u \rpn_{M^1(\Omega)}}{\Leb^N(\Omega)} \qquad \text{ and } \qquad 
\beta = \left( \frac{\lpn u \rpn_{M^p(\Omega)}^p}{\lpn u \rpn_{M^1(\Omega)}} \right)^{\frac{1}{p-1}}\,. $$
Since 
$$
\lambda \Leb^N \big( \{ u>\lambda\} \cap \Omega \big) \leq
\lambda \Leb^N \big( \{ u>\lambda\} \cap \Omega \big)^{\frac{1}{p}} \Leb^N(\Omega)^{1-\frac{1}{p}} \leq
\lpn u \rpn_{M^p(\Omega)} \Leb^N(\Omega)^{1-\frac{1}{p}}
$$
we have
$$ \lpn u \rpn_{M^1(\Omega)}\leq\lpn u\rpn_{M^p(\Omega)} \Leb^N (\Omega)^{1-\frac{1}{p}} \,, $$
so that $\alpha\leq\beta$ (we can assume that $\lpn u\rpn_{M^p(\Omega)}<\infty$).
By direct computations we have
\begin{equation}\label{e:i1}
\int_0^\alpha \Leb^N(\Omega) \, d\lambda = \lpn u \rpn_{M^1(\Omega)}\,,
\end{equation}
\begin{equation}\label{e:i2}
\int_\alpha^\beta \frac{\lpn u \rpn_{M^1(\Omega)}}{\lambda} \, d\lambda = \lpn u \rpn_{M^1(\Omega)} 
\log \left( \Leb^N (\Omega) \left( \frac{ \lpn u\rpn_{M^p(\Omega)}}{\lpn u \rpn_{M^1(\Omega)}} \right)^{\frac{p}{p-1}} \right),
\end{equation}
and 
\begin{equation}\label{e:i3}
\int_\beta^{\infty} \frac{ \lpn u \rpn_{M^p(\Omega)}^p}{\lambda^p} \, d\lambda = 
\frac{1}{p-1} \lpn u \rpn_{M^1(\Omega)}.
\end{equation}
Summing up \eqref{e:i1}, \eqref{e:i2}, \eqref{e:i3} and
recalling \eqref{e:lev1}, the desired formula \eqref{e:interp} is proved.
The case $p=\infty$ giving \eqref{e:interpinfty} is easier and is left to the reader.
\end{proof}

\subsection{Maximal function} We now introduce the concept of maximal function and present some relevant properties.

\begin{definition} Let $u$  be a measurable function defined on $\R^N$.
We define the {\em maximal function} of $u$ as
\begin{equation}
M u(x) = \sup_{\eps>0} \mint_{B_\eps(x)} |u(y)| \, dy \qquad\text{for every }x\in\R^N \,.
\label{eq:Mf}
\end{equation}
We define similarly the maximal function of a locally finite measure.
\end{definition}

\begin{propos}\label{p:classmax}
For every $1<p \leq \infty$ we have the strong estimate
\begin{equation}\label{e:maxp}
\| Mu \|_{L^p(\R^N)} \leq C_{N,p} \|u\|_{L^p(\R^N)}\,, 
\end{equation}
while for $p=1$ we have the weak estimate
\begin{equation}\label{e:max1}
\lpn Mu \rpn_{M^1(\R^N)} \leq C_N \| u \|_{L^1(\R^N)} \,. 
\end{equation}
The inequality \eqref{e:max1} also holds for a finite measure.
\end{propos}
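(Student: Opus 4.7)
The plan is to treat the three bounds in the standard harmonic-analytic order: first the trivial endpoint, then the weak $(1,1)$ estimate via a covering argument, and finally the strong $L^p$ estimate by Marcinkiewicz-type real interpolation.

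First I would dispose of the case $p=\infty$ immediately: since each average $\fint_{B_\varepsilon(x)}|u|\,dy$ is bounded by $\|u\|_{L^\infty}$, one has $\|Mu\|_{L^\infty}\leq\|u\|_{L^\infty}$ pointwise. This gives \eqref{e:maxp} at $p=\infty$ with $C_{N,\infty}=1$.

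Next I would prove \eqref{e:max1}. Fix $\lambda>0$ and let $E_\lambda=\{Mu>\lambda\}$. For every $x\in E_\lambda$ there is a radius $\varepsilon_x>0$ with
\[
\Leb^N\bigl(B_{\varepsilon_x}(x)\bigr)<\frac{1}{\lambda}\int_{B_{\varepsilon_x}(x)}|u(y)|\,dy.
\]
I would now invoke the Vitali $5r$-covering lemma to extract from $\{B_{\varepsilon_x}(x)\}_{x\in E_\lambda}$ a countable, pairwise disjoint subfamily $\{B_i\}$ such that the concentric dilations $\{5 B_i\}$ still cover $E_\lambda$ (using an appropriate compact exhaustion to avoid issues with unbounded radii, or restricting to a bounded open subset and then letting it exhaust $\R^N$). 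Then
\[
\Leb^N(E_\lambda)\leq 5^N\sum_i\Leb^N(B_i)\leq\frac{5^N}{\lambda}\sum_i\int_{B_i}|u|\,dy\leq\frac{5^N}{\lambda}\|u\|_{L^1(\R^N)},
\]
by disjointness of the $B_i$. This gives \eqref{e:max1} with $C_N=5^N$. The same argument applies verbatim to a finite measure $\mu$, replacing $\int_{B_i}|u|$ by $|\mu|(B_i)$ and $\|u\|_{L^1}$ by $|\mu|(\R^N)$.

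Finally, for $1<p<\infty$ I would obtain \eqref{e:maxp} by truncation combined with the two endpoint bounds. Given $\lambda>0$, decompose $u=u^\lambda+u_\lambda$ with $u^\lambda=u\,\ind_{\{|u|>\lambda/2\}}$ and $u_\lambda=u\,\ind_{\{|u|\leq\lambda/2\}}$; sublinearity of $M$ together with $\|Mu_\lambda\|_{L^\infty}\leq\lambda/2$ yields $\{Mu>\lambda\}\subset\{Mu^\lambda>\lambda/2\}$. Applying \eqref{e:max1} to $u^\lambda\in L^1$ gives
\[
\Leb^N\bigl(\{Mu>\lambda\}\bigr)\leq\frac{2C_N}{\lambda}\int_{\{|u|>\lambda/2\}}|u(y)|\,dy.
\]
Then using the layer-cake formula $\|Mu\|_{L^p}^p=p\int_0^\infty\lambda^{p-1}\Leb^N(\{Mu>\lambda\})\,d\lambda$ and Fubini,
\[
\|Mu\|_{L^p}^p\leq 2pC_N\int_{\R^N}|u(y)|\int_0^{2|u(y)|}\lambda^{p-2}\,d\lambda\,dy=\frac{2^p\,pC_N}{p-1}\|u\|_{L^p}^p,
\]
which is \eqref{e:maxp} with $C_{N,p}=(2^p p C_N/(p-1))^{1/p}$. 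The only nontrivial step is the Vitali covering argument underlying the weak estimate; everything else is sublinearity, truncation, and layer-cake.
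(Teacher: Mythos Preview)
Your argument is correct and follows the classical route (trivial $L^\infty$ bound, Vitali covering for the weak $(1,1)$ estimate, then real interpolation by truncation and layer-cake). The paper does not give its own proof of this proposition; it states it as background and refers the reader to Stein's book, where precisely this argument appears. So there is nothing to compare against beyond noting that your proof is the standard one the paper is implicitly invoking.
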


\begin{remark} We stress the fact that the strong estimate \eqref{e:maxp} does not hold when $p=1$. It is even possible to show that, if $u \in L^1(\R^N)$ and $u \not \equiv 0$, then $Mu \not \in L^1(\R^N)$.
Nevertheless, given $u\in L^1(\R^N)$, we have $Mu\in L^1_{\loc}(\R^N)$
if and only if $|u|\log^+|u|\in L^1_{\loc}(\R^N)$,
where $\log^+ t = \max \{\log t, 0\}$.
\end{remark}

\begin{lemma}\label{l:lemmapsi}
Let $\psi : ]0,\infty[ \to [0,\infty[$ be a nonincreasing function and assume that
$$
I \equiv \int_{\R^N} \psi (|y|) \, dy < \infty \,. 
$$
Then for every $u \in L^1_\loc (\R^N)$ and every $\eps > 0$ we have
$$ \int_{\R^N} | u(x-y) | \frac{1}{\eps^N} \psi \left( \frac{ |y| }{\eps} \right) \, dy \leq I \cdot Mu (x) 
\qquad \text{\rm for every $x \in \R^N$.} $$
\end{lemma}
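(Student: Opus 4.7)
The approach I would take is the classical ``layer cake'' (sublevel set) decomposition of $\psi$. Since $\psi$ is nonincreasing on $]0,\infty[$, for each $t>0$ the set $\{r>0 : \psi(r)>t\}$ is of the form $]0,R(t)[$ (or $]0,R(t)]$) for some $R(t)\geq 0$ with $R(t)\to 0$ as $t\to\infty$; hence $\{y\in\R^N : \psi(|y|)>t\}$ is (up to a negligible set) the open ball $B_{R(t)}(0)$. Writing
\[
\psi(|y|)=\int_0^{\infty}\ind_{\{\psi(|y|)>t\}}\,dt
=\int_0^{\infty}\ind_{B_{R(t)}(0)}(y)\,dt,
\]
Fubini's theorem gives $I=\int_0^\infty \Leb^N(B_{R(t)})\,dt$.

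Next I would reduce to $\eps=1$ by rescaling. Setting $\psi_\eps(r)=\eps^{-N}\psi(r/\eps)$, the function $\psi_\eps$ is again nonincreasing on $]0,\infty[$ and $\int_{\R^N}\psi_\eps(|y|)\,dy=I$ by the change of variables $y=\eps z$. Applying the layer cake formula to $\psi_\eps$ produces radii $R_\eps(t)=\eps R(\eps^N t)$ and the identity
\[
\int_{\R^N}|u(x-y)|\psi_\eps(|y|)\,dy
=\int_0^{\infty}\int_{B_{R_\eps(t)}(0)}|u(x-y)|\,dy\,dt,
\]
by Fubini (all integrands are nonnegative). For the inner integral I would simply invoke the definition of the maximal function: for every radius $r>0$,
\[
\int_{B_r(0)}|u(x-y)|\,dy=\int_{B_r(x)}|u(z)|\,dz\leq \Leb^N(B_r)\, Mu(x).
\]
Plugging this bound in and using $\int_0^\infty\Leb^N(B_{R_\eps(t)})\,dt=\int_{\R^N}\psi_\eps(|y|)\,dy=I$ yields exactly the claimed inequality.

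I do not anticipate any real obstacle; the only mild technical point is justifying the layer cake representation when $\psi$ may have jumps (handled by taking $R(t)=\sup\{r:\psi(r)>t\}$ and noting the resulting set equals $\{y:\psi(|y|)>t\}$ up to the spheres where $\psi$ jumps, which have Lebesgue measure zero), and checking the normalization of $\psi_\eps$ under rescaling. Everything else is Fubini plus the pointwise bound on averages by the maximal function.
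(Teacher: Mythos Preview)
Your proposal is correct and is precisely the standard argument the paper defers to (it cites Stein, Chapter~III, \S2.2, Theorem~2(a), without giving its own proof); Stein's version phrases the decomposition as an approximation of $\psi$ by nonnegative linear combinations of characteristic functions of balls, which is the discrete form of your layer cake representation. The reduction to $\eps=1$ and the Fubini step are exactly as in the cited reference, so there is nothing to add.
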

\noindent The proof of this lemma can be found in \cite{Stein},
Chapter III, Section 2.2, Theorem 2(a).

\subsection{Singular integral operators}\label{ss:singop} We now present different classes of singular kernels and describe the properties of the associated singular integral operators. As usual, $\S'(\R^N)$ is the space of tempered distributions on $\R^N$, and $\S(\R^N)$ the Schwartz space.

\begin{definition}[Singular kernel]\label{d:ker}
We say that $K$ is a {\em singular kernel} on $\R^N$ if
\begin{itemize}
\item[(i)] $K \in \S'(\R^N)$ and $\widehat K \in L^\infty (\R^N)$;
\item[(ii)] $K | _{\R^N \setminus \{0\}} \in L^1_\loc (\R^N \setminus \{0\})$ and there exists a constant $A \geq 0$ such that
$$
\int_{|x| > 2|y|} |K(x-y) - K(x)| \, dx \leq A \qquad \text{for every $y \in \R^N$.} $$
\end{itemize}
\end{definition}

\begin{theorem}[Calder\'on--Zygmund]\label{t:CZint} Let $K$ be a singular kernel and define
$$ Su = K * u \qquad \text{ for $u \in L^2(\R^N)$} \,, $$
in the sense of multiplication in the Fourier variable
(recall (i) in Definition~\ref{d:ker}).
Then for every $1<p < \infty$ we have the strong estimate
\begin{equation}\label{e:defintp}
\| Su \|_{L^p(\R^N)} \leq C_{N,p}(A+\|\widehat K\|_{L^\infty}) \|u\|_{L^p(\R^N)}\,, \qquad u \in L^p\cap L^2(\R^N)\,,
\end{equation}
while for $p=1$ we have the weak estimate
\begin{equation}\label{e:defint1}
\lpn Su \rpn_{M^1(\R^N)} \leq C_{N}(A+\|\widehat K\|_{L^\infty}) \| u \|_{L^1(\R^N)} \,, \qquad u \in L^1\cap L^2(\R^N) \,.
\end{equation}
\end{theorem}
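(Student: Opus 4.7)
The plan is to follow the classical Calder\'on--Zygmund scheme: establish an $L^2$ bound directly from (i), then upgrade it to the weak--$(1,1)$ estimate \eqref{e:defint1} by a Calder\'on--Zygmund decomposition exploiting (ii), and finally interpolate plus dualize to reach \eqref{e:defintp} for all $1<p<\infty$.

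First, the $L^2$ case. Since $Su=K*u$ is defined as $\widehat{Su}=\widehat K\,\widehat u$, Plancherel gives immediately
\[
\|Su\|_{L^2}=\|\widehat K\,\widehat u\|_{L^2}\leq \|\widehat K\|_{L^\infty}\|u\|_{L^2},
\]
which is \eqref{e:defintp} for $p=2$. This is the only point where assumption (i) is used in a quantitative way.

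Next, the weak--$(1,1)$ estimate, which is the heart of the theorem. Given $u\in L^1\cap L^2$ and $\lambda>0$, I would apply the Calder\'on--Zygmund decomposition at level $\lambda$: write $u=g+b$, where $g\in L^\infty\cap L^1$ satisfies $|g|\leq C_N\lambda$ and $\|g\|_{L^1}\leq\|u\|_{L^1}$, while $b=\sum_j b_j$ with each $b_j$ supported in a dyadic cube $Q_j$, having zero mean on $Q_j$, satisfying $\|b_j\|_{L^1}\leq C_N\lambda\,\Leb^N(Q_j)$, and with $\sum_j \Leb^N(Q_j)\leq\|u\|_{L^1}/\lambda$. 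Splitting $\{|Su|>\lambda\}\subset\{|Sg|>\lambda/2\}\cup\{|Sb|>\lambda/2\}$, the good part is handled by Chebyshev and the $L^2$ bound just proved, since
\[
\Leb^N\bigl(\{|Sg|>\lambda/2\}\bigr)\leq \frac{4}{\lambda^2}\|Sg\|_{L^2}^2
\leq \frac{C\,\|\widehat K\|_{L^\infty}^2}{\lambda^2}\|g\|_{L^2}^2
\leq \frac{C\,\|\widehat K\|_{L^\infty}^2}{\lambda}\|u\|_{L^1}.
\]
For the bad part, let $Q_j^\ast$ be a fixed dilate of $Q_j$ and $E=\bigcup_j Q_j^\ast$. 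Then $\Leb^N(E)\leq C_N\|u\|_{L^1}/\lambda$, and it remains to estimate $Sb$ on $\R^N\setminus E$. Using the mean-zero property of $b_j$, write
\[
Sb_j(x)=\int_{Q_j}\bigl(K(x-y)-K(x-y_j)\bigr)b_j(y)\,dy
\]
for $x\notin Q_j^\ast$ and $y_j$ the centre of $Q_j$. Integrating $|Sb_j(x)|$ over $x\notin Q_j^\ast$ and invoking the H\"ormander-type condition (ii) (after the change of variable $x\mapsto x-y_j$, noting that $|x-y_j|>2|y-y_j|$ on the complement of $Q_j^\ast$) yields $\int_{\R^N\setminus Q_j^\ast}|Sb_j|\leq A\|b_j\|_{L^1}$. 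Summing in $j$ and applying Chebyshev gives $\Leb^N(\{|Sb|>\lambda/2\}\setminus E)\leq CA\|u\|_{L^1}/\lambda$. Combining with the measure of $E$ and the good-part estimate proves \eqref{e:defint1} with constant $C_N(A+\|\widehat K\|_{L^\infty})$.

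Finally, for \eqref{e:defintp} with $1<p<\infty$. The range $1<p<2$ follows from Marcinkiewicz interpolation between weak--$(1,1)$ and strong--$(2,2)$. For $2<p<\infty$, I would dualize: the formal adjoint of $S$ is convolution against $\widetilde K(x)=K(-x)$, which satisfies (i) with $\widehat{\widetilde K}(\xi)=\widehat K(-\xi)\in L^\infty$ of the same norm, and (ii) with the same constant $A$ (the condition is symmetric in $x,y$ after the substitution $x\mapsto -x$, $y\mapsto -y$). Hence $S^\ast$ is bounded on $L^{p'}$ for $1<p'<2$ by what was just proved, so $S$ is bounded on $L^p$ for $2<p<\infty$ with the stated constant. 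The main delicate point of the whole argument is the cancellation step in the bad-part estimate, where mean-zero of $b_j$ is combined with the H\"ormander integral in (ii); everything else is bookkeeping.
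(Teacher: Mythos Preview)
The paper does not actually prove Theorem~\ref{t:CZint}: it is stated as classical background material, with the proofs referred to Stein \cite{Stein}. Your sketch is precisely the standard Calder\'on--Zygmund argument one finds there, and it is essentially correct. It is also worth noting that the paper \emph{does} prove in full detail the closely related interpolation Lemma~\ref{l:interp}, and the Calder\'on--Zygmund decomposition there follows exactly the pattern you outline, so in spirit your approach matches the paper's methods.

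Two small remarks. First, in your bad-part estimate you write $Sb_j(x)=\int_{Q_j}(K(x-y)-K(x-y_j))\,b_j(y)\,dy$ for $x\notin Q_j^\ast$; since $S$ is defined as a Fourier multiplier rather than a pointwise convolution, one should say a word about why the two agree for $x$ away from $\spt b_j$ (this is standard, using $b_j\in L^2$ and $K\in L^1_{\loc}(\R^N\setminus\{0\})$), and similarly justify $Sb=\sum_j Sb_j$ via $L^2$ convergence. Second, your good-part estimate produces the constant $C_N\|\widehat K\|_{L^\infty}^2$ rather than the linear $C_N\|\widehat K\|_{L^\infty}$ claimed in \eqref{e:defint1}. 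This is easily fixed either by a homogeneity argument (normalize so that $A+\|\widehat K\|_{L^\infty}=1$, then scale back), or---as the paper does in the proof of Lemma~\ref{l:interp}---by performing the Calder\'on--Zygmund decomposition at a level $\alpha$ decoupled from the threshold $\lambda$ and then optimizing in $\alpha$ at the end.
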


\begin{corol}\label{c:CZint}
The operator $S$ can be extended to the whole $L^p(\R^N)$ for any $1<p< \infty$, with values in $L^p(\R^N)$, and estimate \eqref{e:defintp} holds for every $u \in L^p(\R^N)$. Moreover, the operator $S$ can be extended to the whole $L^1(\R^N)$, with values in $M^1(\R^N)$, and estimate \eqref{e:defint1} holds for every $u \in L^1(\R^N)$.
\end{corol}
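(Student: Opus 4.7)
The plan is to extend $S$ from the dense subspace $L^p\cap L^2$ to all of $L^p$ by continuity, using Theorem~\ref{t:CZint} applied to differences of approximants. For density, given $u\in L^p(\R^N)$ with $1\leq p<\infty$, the truncations $u_n := u\,\ind_{\{|u|\leq n\}\cap B_n}$ belong to $L^\infty$ with compact support, hence to $L^2$, and $u_n\to u$ in $L^p$ by dominated convergence.

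For $1<p<\infty$ the argument is routine. Applying \eqref{e:defintp} to $u_n - u_m\in L^p\cap L^2$ gives
$$
\| Su_n - Su_m \|_{L^p} \leq C_{N,p}(A+\|\widehat K\|_{L^\infty})\, \| u_n - u_m \|_{L^p} \to 0,
$$
so $\{Su_n\}$ is Cauchy in $L^p$; I would define $Su$ as its limit, check independence of the approximating sequence via the same estimate applied to $u_n - v_n$, and pass to the limit in \eqref{e:defintp} for $u_n$ to obtain the estimate for $u$.

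The case $p=1$ is the delicate one, because $M^1(\R^N)$ is only a quasi-normed space, not a Banach space. Applying \eqref{e:defint1} to $u_n-u_m$ still yields
$$
\lpn Su_n - Su_m \rpn_{M^1} \leq C_N(A+\|\widehat K\|_{L^\infty})\,\| u_n - u_m \|_{L^1} \to 0,
$$
and by Chebyshev's inequality $\Leb^N(\{|f|>\lambda\})\leq \lpn f\rpn_{M^1}/\lambda$, so convergence in $M^1$ quasi-norm implies convergence in measure. I would therefore extract a subsequence of $\{Su_{n_k}\}$ converging almost everywhere, and define $Su$ as this pointwise a.e.\ limit. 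Fatou's lemma applied to the level sets $\{|Su_{n_k}|>\lambda\}$ then yields \eqref{e:defint1} for $u\in L^1$ directly, since $\Leb^N(\{|Su|>\lambda\})\leq \liminf_k \Leb^N(\{|Su_{n_k}|>\lambda\})$.

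The main obstacle is verifying that $Su$ is well defined independently of the approximating sequence, given the failure of the usual triangle inequality in $M^1$. Here one invokes the quasi-triangle inequality \eqref{e:care}: if $u_n, v_n\to u$ in $L^1$ are two approximating sequences with a.e.\ limits $Su$ and $\widetilde{Su}$ respectively, then for every $n$,
$$
\lpn Su - \widetilde{Su} \rpn_{M^1} \leq C_1\left(\lpn Su - Su_n \rpn_{M^1} + \lpn Su_n - Sv_n\rpn_{M^1} + \lpn Sv_n - \widetilde{Su}\rpn_{M^1}\right),
$$
after iterating \eqref{e:care} twice, and all three terms vanish as $n\to\infty$ (the middle one by \eqref{e:defint1}, the outer two by Fatou). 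Hence $Su = \widetilde{Su}$ a.e., which completes the extension.
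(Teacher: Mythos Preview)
Your argument is correct. The paper does not supply a proof of this corollary at all; it is stated immediately after Theorem~\ref{t:CZint} as a standard consequence, in keeping with the classical nature of the result (cf.\ \cite{Stein}). Your density-and-completion argument is exactly the conventional route: for $1<p<\infty$ the extension is a routine bounded linear extension, and for $p=1$ you correctly handle the fact that $M^1$ is only quasi-normed by passing through convergence in measure, extracting an a.e.\ convergent subsequence, and using Fatou's lemma on the distribution functions (the paper itself uses this Fatou-on-level-sets device elsewhere, e.g.\ in the proofs of Lemma~\ref{l:interp} and Theorem~\ref{t:mainest}(ii)). The verification of well-definedness via the quasi-triangle inequality \eqref{e:care} combined with Fatou on the outer terms is sound.

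One minor remark: in the Fatou step for the final estimate, you implicitly use that if $Su_{n_k}(x)\to Su(x)$ and $|Su(x)|>\lambda$, then $|Su_{n_k}(x)|>\lambda$ eventually; this is fine since the inequality is strict, but it is worth stating. Otherwise there is nothing to add---you have filled in a detail the paper deliberately left to the reader.
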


\begin{definition}
The operator $S$ constructed in Corollary~\ref{c:CZint} is called the {\em singular integral operator} associated to the singular kernel $K$.
\end{definition}

\begin{remark} The case $p=1$ deserves some comments.
Indeed, the extension $S^{M^1}$ defined on $L^1$ with values in $M^1$
can induce some confusion, due to the fact
that a function in $M^1$ is in general not locally integrable, thus
does not define a distribution. We can observe that for $u\in L^1(\R^N)$,
we can define a tempered distribution $S^D u\in\S'(\R^N)$ by the formula
\begin{equation}
	\left\langle S^D u,\varphi\right\rangle=\left\langle u,\widetilde S\varphi\right\rangle
	\qquad\text{for every }\varphi\in\S(\R^N),
	\label{eq:S^D}
\end{equation}
where $\widetilde S$ is the singular integral operator associated to the kernel
$\widetilde K(x)=K(-x)$. Indeed, for $\varphi\in\S(\R^N)$, we have
$\widetilde S\varphi\in H^q(\R^N)\subset C_0(\R^N)$ for $q>N/2$.
Then $S^D:L^1(\R^N)\rightarrow \S'(\R^N)$ is an extension of $S$,
with values tempered distributions. The operators $S^{M^1}$ and $S^D$
are different and cannot be identified.
Observe also that $S^D u\in\S'(\R^N)$ can also be defined by
\eqref{eq:S^D} for $u$ a finite measure on $\R^N$. Also notice that
the definition in \eqref{eq:S^D} is equivalent to the definition in
Fourier variables
$$
\widehat{S^Du} = \widehat K \widehat u \,,
$$
for which we use that $\widehat K \in L^\infty$ and $\widehat u \in L^\infty$.
\label{Rk extM1D}
\end{remark}

The following characterization of singular kernels is available.

\begin{propos}Consider a function $K\in L^1_{\loc}(\R^N \setminus \{0\})$ satisfying the 
following conditions:
\begin{itemize}
\item[(i)] There exists a constant $A \geq 0$ such that
$$ \int_{|x| > 2|y|} |K(x-y) - K(x)| \, dx \leq A \qquad \text{for every $y \in \R^N$}; $$
\item[(ii)] There exists a constant $A_0\ge 0$ such that
$$ \int_{|x|\leq R}|x||K(x)|\,dx\leq A_0R \qquad\text{for every }R>0; $$
\item[(iii)] There exists a constant $A_2 \geq 0$ such that
$$
\left| \int_{R_1 < |x| < R_2} K(x) \, dx \right| \leq A_2
\qquad\text{for every }0 < R_1 < R_2 < \infty.
$$
\end{itemize}
Then $K$ can be extended to a tempered distribution on $\R^N$ which
is a singular kernel, unique up to a constant times a Dirac mass
at the origin. Conversely, any singular kernel on $\R^N$ has
a restriction on $\R^N \setminus \{0\}$ that satisfies the previous
conditions (i), (ii), (iii).
\label{p:singkernels}
\end{propos}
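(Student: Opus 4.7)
The plan is to construct the tempered extension $\tilde K$ by truncating $K$ in $L^1(\R^N)$, bounding uniformly the Fourier transforms of the truncations, and extracting a weak-$*$ limit. For $0<\eps<R$ set
\[
K_{\eps,R}(x)=K(x)\,\mathbf{1}_{\eps<|x|<R}.
\]
This belongs to $L^1(\R^N)$, since on $\{|x|>\eps\}$ we have $|K(x)|\le|x||K(x)|/\eps$, so by (ii)
\[
\|K_{\eps,R}\|_{L^1}\le\frac{1}{\eps}\int_{|x|\le R}|x||K(x)|\,dx\le\frac{A_0R}{\eps}.
\]
The heart of the argument, and the step I expect to be the main obstacle, is the uniform estimate $\sup_{0<\eps<R,\,\xi\in\R^N}\bigl|\widehat{K_{\eps,R}}(\xi)\bigr|\le C(A,A_0,A_2)$.

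To prove it, fix $\xi\ne 0$, set $\delta=1/(2|\xi|)$, and split $\widehat{K_{\eps,R}}(\xi)$ at the scale $\delta$. On the inner piece $\eps<|x|<\delta$ I would decompose $e^{-2\pi ix\cdot\xi}=1+(e^{-2\pi ix\cdot\xi}-1)$: the constant part is bounded by $A_2$ via (iii), and the remainder by $2\pi|\xi|\int_{|x|<\delta}|x||K(x)|\,dx\le\pi A_0$ via (ii). On the outer piece, denoting by $K^{\mathrm{out}}$ the restriction of $K_{\eps,R}$ to $\{|x|>\delta\}$, I would choose $y=\xi/(2|\xi|^2)$ so that $|y|=\delta$ and $e^{-2\pi iy\cdot\xi}=-1$, and apply the classical translation trick
\[
2\widehat{K^{\mathrm{out}}}(\xi)=\int\bigl[K^{\mathrm{out}}(x)-K^{\mathrm{out}}(x-y)\bigr]e^{-2\pi ix\cdot\xi}\,dx.
\]
The $L^1$ bound on the integrand splits into three contributions: the region where both $x$ and $x-y$ lie in the support and $|x|>2|y|$, handled by the Hörmander condition (i) with bound $A$; a thin annulus $\delta<|x|\le 2|y|$ of width $O(\delta)$; and two boundary layers near $|x|=\delta$ and $|x|=R$ where only one of $x,x-y$ lies in the support, each bounded by a multiple of $A_0$ via the trivial inequality $|K(x)|\le|x||K(x)|/a$ for $|x|\ge a$ combined with (ii).

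Once this uniform bound is in hand, Banach--Alaoglu yields a sequence $(\eps_n,R_n)\to(0,\infty)$ along which $\widehat{K_{\eps_n,R_n}}\to m$ weak-$*$ in $L^\infty$; dualising via the Fourier transform, $K_{\eps_n,R_n}\to\tilde K$ in $\S'(\R^N)$ with $\widehat{\tilde K}=m\in L^\infty$, verifying (i) of Definition~\ref{d:ker}. Since each truncation agrees with $K$ on its support, $\tilde K|_{\R^N\setminus\{0\}}=K$, and (ii) of Definition~\ref{d:ker} reduces to our hypothesis (i). For uniqueness, two extensions of $K$ differ by a distribution supported at the origin, whose Fourier transform is a polynomial; requiring this polynomial to be bounded forces it to be constant, so the difference is a multiple of $\delta_0$. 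For the converse direction, given a singular kernel $\tilde K$ with restriction $K$, condition (i) is part of the definition, while (ii) and (iii) follow from the classical Calder\'on--Zygmund theory by testing $\tilde K$ against suitable smooth approximations of indicators of balls and annuli and exploiting $\widehat{\tilde K}\in L^\infty$ (equivalently the $L^2$-boundedness of $S$) together with the Hörmander condition; see~\cite{Stein}.
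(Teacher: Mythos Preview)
The paper does not actually supply a proof of this proposition: it is stated in Section~\ref{s:back} as classical background, with the blanket remark that ``many proofs can be found in \cite{Stein}'', and the text proceeds immediately to Definition~\ref{d:fundker}. So there is no ``paper's own proof'' to compare against; your sketch is in effect filling in what the authors leave to the reference.

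That said, your argument is the standard one and is essentially correct for the forward direction. The truncation $K_{\eps,R}$, the splitting of $\widehat{K_{\eps,R}}(\xi)$ at the scale $\delta=1/(2|\xi|)$, the use of (iii) and (ii) on the inner piece, and the translation trick with the H\"ormander condition (i) on the outer piece are exactly how this is done in \cite{Stein}. Your treatment of the boundary layers is a little terse but works: for the outer layer $R-\delta<|x|<R$ one needs $R\ge 2\delta$ to get a clean bound $\le 2A_0$, but when $R<2\delta$ the entire outer region already sits in the small annulus $\delta<|x|<2\delta$ handled separately. The weak-$*$ extraction and the identification of the restriction with $K$ (using that $K_{\eps_n,R_n}$ eventually agrees with $K$ on any compact set away from the origin) are fine, as is the uniqueness argument via the structure of point-supported distributions.

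The only place where the sketch is genuinely incomplete is the converse direction: deducing (ii) and (iii) from $\widehat{\tilde K}\in L^\infty$ together with the H\"ormander condition is not entirely trivial, and ``testing against smooth approximations of indicators'' together with a pointer to \cite{Stein} is more of a gesture than an argument. Since the paper itself defers the whole proposition to \cite{Stein}, this is not a defect relative to the paper, but if you want a self-contained write-up you should either spell this out or give a precise reference (e.g.\ the relevant sections of Chapter~I or II in \cite{Stein}).
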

For our purpose we introduce a more regular class of kernels.

\begin{definition}\label{d:fundker} 
We say that a kernal $K$ is a {\em singular kernel of fundamental type} if the following properties hold:
\begin{itemize}
\item[(i)] $K |_{\R^N \setminus \{0\}} \in C^1(\R^N \setminus \{0\})$;
\item[(ii)] There exists a constant $C_0 \geq 0$ such that
\begin{equation}\label{e:fund0}
|K(x)| \leq \frac{C_0}{|x|^N} \qquad \text{ for every $x \not = 0$;}
\end{equation}
\item[(iii)] There exists a constant $C_1 \geq 0$ such that
\begin{equation}\label{e:fund1}
|\nabla K(x)| \leq \frac{C_1}{|x|^{N+1}} \qquad \text{ for every $x \not = 0$;}
\end{equation}
\item[(iv)] There exists a constant $A_2 \geq 0$ such that
\begin{equation}\label{e:fund2}
\left| \int_{R_1 < |x| < R_2} K(x) \, dx \right| \leq A_2
\qquad\text{for every }0 < R_1 < R_2 < \infty.
\end{equation}
\end{itemize}
These conditions imply those in Proposition~\ref{p:singkernels}.
\end{definition}

\subsection{An interpolation lemma}

The following interpolation lemma is a generalization
of classical results on singular integrals, see for instance Section II.2 of \cite{Stein}.
We give its full proof for completeness.

\begin{lemma}\label{l:interp}
Let $T_+ : L^2(\R^N) \to L^2(\R^N)$ be a (nonlinear) operator satisfying
\begin{itemize}
\item[(i)] $T_+ (u) \geq 0$ for every $u \in L^2(\R^N)$;
\item[(ii)] $T_+(u+v) \leq T_+(u) + T_+(v)$ for every $u, v \in L^2(\R^N)$;
\item[(iii)] $T_+(\lambda u) = |\lambda| T_+(u)$ for every $u \in L^2(\R^N)$ and every $\lambda \in \R$;
\item[(iv)] There exists a constant $P_2\ge 0$ such that
$$ \| T_+(u)\|_{L^2(\R^N)} \leq P_2 \|u\|_{L^2(\R^N)}
\qquad\text{for every $u \in L^2(\R^N)$;} $$
\item[(v)] There exists a constant $P_1\ge 0$ such that
if $u \in L^2(\R^N)$ satisfies $\spt u \subset \overline{B}_R(x_0)$
for some $x_0\in\R^N$ and $R>0$, and $\int_{\R^N} u = 0$, then
$$
\int_{|x-x_0| > 2R} T_+(u) \, dx \leq P_1 \|u\|_{L^1(\R^N)} \,.
$$
\end{itemize}
Then there exists a constant $C_N$, which depends only on the dimension $N$, such that
$$
	\lpn T_+ (u) \rpn_{M^1(\R^N)} \leq C_N(P_1+P_2) \|u\|_{L^1(\R^N)}
	\qquad\text{for every }u \in L^1 \cap L^2 (\R^N).
$$
\end{lemma}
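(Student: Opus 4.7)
The plan is to prove this weak-type $(1,1)$ bound by the standard Calder\'on--Zygmund decomposition of $u$ combined with the sublinearity of $T_+$, but with the decomposition threshold tuned to $P_2$ so that the constant comes out linear in $P_1+P_2$: a naive decomposition at level $\lambda$ instead produces $P_2^2$, which is incompatible with the scaling of the statement.

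Fix $\lambda>0$ and a free parameter $\alpha>0$; the case $P_2=0$ is trivial (then $T_+u\equiv 0$ by (iv)), so assume $P_2>0$. I would apply the Calder\'on--Zygmund stopping-time decomposition to $|u|$ at level $\alpha\lambda$: this produces pairwise disjoint dyadic cubes $\{Q_j\}$ with centers $x_j$ and enclosing balls $B_{R_j}(x_j)$, $R_j=\sqrt{N}\,\ell_j/2$, and a splitting $u=g+b$, $b=\sum_j b_j$, with $\spt b_j\subset Q_j$, $\int b_j=0$, $\|g\|_{L^\infty}\leq 2^N\alpha\lambda$, $\|g\|_{L^1}\leq\|u\|_{L^1}$, $\sum_j\|b_j\|_{L^1}\leq 2\|u\|_{L^1}$ and $\sum_j R_j^N\leq C_N\|u\|_{L^1}/(\alpha\lambda)$. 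Since $u,g\in L^2$ we have $b\in L^2$, and disjointness of the $Q_j$ gives $\|b\|_{L^2}^2=\sum_j\|b_j\|_{L^2}^2<\infty$, so the partial sums $b_{[M]}:=\sum_{j\leq M} b_j$ converge to $b$ in $L^2$.

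From (i)--(iii) one derives $|T_+(v)-T_+(w)|\leq T_+(v-w)$, which combined with (iv) makes $T_+\colon L^2\to L^2$ Lipschitz with constant $P_2$; hence $T_+(b_{[M]})\to T_+(b)$ in $L^2$, and passing to an a.e.\ subsequence in the sublinear bound $T_+(b_{[M]})\leq\sum_{j\leq M}T_+(b_j)$ yields $T_+(b)\leq\sum_j T_+(b_j)$ a.e., while $T_+(u)\leq T_+(g)+T_+(b)$ a.e.\ by the same sublinearity. Setting $E=\bigcup_j B_{2R_j}(x_j)$, so that $|E|\leq C_N\|u\|_{L^1}/(\alpha\lambda)$, the usual dichotomy gives
$$
\Leb^N\bigl(\{T_+u>\lambda\}\bigr)\leq \Leb^N\bigl(\{T_+g>\lambda/2\}\bigr)+\Leb^N\bigl(\{T_+b>\lambda/2\}\cap E^c\bigr)+|E|.
$$
Chebyshev with (iv) and $\|g\|_{L^2}^2\leq\|g\|_{L^\infty}\|g\|_{L^1}\leq 2^N\alpha\lambda\|u\|_{L^1}$ controls the good part by $C_NP_2^2\alpha\|u\|_{L^1}/\lambda$, while hypothesis (v) applied to each $b_j$ yields $\int_{E^c}T_+(b)\,dx\leq P_1\sum_j\|b_j\|_{L^1}\leq 2P_1\|u\|_{L^1}$, so Chebyshev bounds the bad part by $4P_1\|u\|_{L^1}/\lambda$.

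Summing and multiplying by $\lambda$,
$$
\lambda\,\Leb^N\bigl(\{T_+u>\lambda\}\bigr)\leq C_N\bigl(P_2^2\alpha+P_1+\alpha^{-1}\bigr)\|u\|_{L^1};
$$
choosing $\alpha=1/P_2$ balances the first and last terms and yields $C_N(P_1+P_2)\|u\|_{L^1}$, which upon supremum over $\lambda>0$ is exactly $\lpn T_+(u)\rpn_{M^1(\R^N)}\leq C_N(P_1+P_2)\|u\|_{L^1}$. The main technical obstacles are: (a) making sublinearity compatible with the possibly infinite sum $b=\sum_j b_j$, handled via the $L^2$-Lipschitz property of $T_+$ together with the $L^2$-orthogonality of the $b_j$; and (b) the tuning of $\alpha$, which is precisely what converts the apparent $P_2^2$ into the required linear dependence on $P_2$.
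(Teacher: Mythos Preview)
Your proof is correct and follows essentially the same route as the paper: Calder\'on--Zygmund decomposition at a tunable level, sublinear splitting into good and bad parts, Chebyshev with (iv) on the good part, hypothesis (v) on the bad part away from the dilated cubes, and optimization of the threshold to get the linear dependence on $P_1+P_2$. The only cosmetic difference is in the passage to infinitely many bad pieces: the paper estimates $T_+\bigl(v+\sum_{k\le m}w_k\bigr)$ directly, passes to the limit in the superlevel-set measure via Fatou, and optimizes afterwards, whereas you pass to the limit first in the pointwise inequality $T_+(b)\le\sum_j T_+(b_j)$ using the $L^2$-Lipschitz continuity of $T_+$; both are valid and your parametrization $\alpha\lambda$ with $\alpha=1/P_2$ is equivalent to the paper's choice $\alpha\sim\lambda/P_2$.
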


\begin{proof}
We preliminarily notice that from assumptions (i), (ii), (iii) it follows that
\begin{equation}\label{e:prelsub}
T_+(-u) = T_+(u), \qquad |T_+(u) - T_+(v) | \leq T_+(u-v),
\end{equation}
for every $u, v \in L^2(\R^N)$.

\medskip

{\sc Step 1. Calder\'on--Zygmund decomposition.} Given $u \in L^1(\R^N)\cap L^2 (\R^N)$
and $\alpha>0$, we perform the so-called Calder\'on--Zygmund decomposition in cubes of $\R^N$ (see for instance Section I.3 of \cite{Stein}).
We find a family $\{I_k\}_{k=1}^\infty$ of closed cubes with disjoint interiors such that
$$
\alpha \Leb^N (I_k) < \int_{I_k} |u| \leq 2^N \alpha \Leb^N(I_k) \qquad \text{ for every $k$},
$$
and
$$
|u| \leq \alpha \qquad \text{a.e. outside } \cup_k I_k \,.
$$
We then set
\begin{equation}\label{e:defwk}
w_k = \left( u - \mint_{I_k} u \right) \ind_{I_k}
\end{equation}
and
\begin{equation}\label{e:defv}
v = \left\{
\begin{array}{cl} u & \text{ for $x \not \in \cup_k I_k$}, \\
\displaystyle\mint_{I_k} u & \text{ for $x \in I_k$.} \end{array} \right.
\end{equation}
Then we obviously have $v,w_k\in L^1(\R^N)$,
\begin{equation}\label{e:propw}
\spt w_k \subset I_k \,, \quad \int_{I_k} w_k = 0 \,, \quad \|w_k\|_{L^1(\R^N)} \leq 2 \int_{I_k} |u| \,, \quad\sum_k \|w_k\|_{L^1(\R^N)} \leq 2 \| u \|_{L^1(\R^N)}.
\end{equation}
Moreover, it is readily checked that
\begin{equation}
	u = v + \sum_k w_k \qquad \text{$\Leb^N$-a.e. and in $L^1(\R^N)$,}
	\label{eq:ueqsum}
\end{equation}
and that
\begin{equation}\label{e:propv}
\| v \|_{L^1(\R^N)} \leq \| u \|_{L^1(\R^N)} \,, \quad\quad  \| v \|_{L^\infty(\R^N)} \leq 2^N \alpha\,.
\end{equation}
We notice also that
\begin{equation}
	\Leb^N\Bigl(\cup_k I_k\Bigr)\leq\sum_k\Leb^N(I_k)\leq \frac{1}{\alpha}\| u \|_{L^1(\R^N)}.
	\label{eq:measUIk}
\end{equation}
Now, for every $k$ consider an open ball $B_k\equiv B_{r_k} (y_k)$ containing $I_k$ and with the same center $y_k$, such that for some dimensional constant $\beta_N$ we have
$$ \Leb^N(B_k) \leq \beta_N \Leb^N(I_k) \,. $$
Moreover, we set
$$ V_k = B_{2r_k} (y_k), \qquad V = \cup_k V_k \,. $$
Then, similarly as in \eqref{eq:measUIk}, we have
\begin{equation}\label{e:measV}
\Leb^N (V) \leq \sum_k \Leb^N (V_k) \leq \sum_k 2^N \beta_N\Leb^N (I_k) \leq 2^N \beta_N \frac{1}{\alpha} \| u \|_{L^1(\R^N)} \,.
\end{equation}
Since by \eqref{e:propw} $\spt w_k \subset \overline{B}_{k}$ and $\int_{\R^N} w_k = 0$,
from assumption (v) we get
\begin{equation}\label{e:propT}
\int_{\R^N \setminus \overline {V_k}} T_+ (w_k) \leq P_1 \|w_k\|_{L^1(\R^N)} \,.
\end{equation}

\medskip

{\sc Step 2. Proof of the weak estimate.} We fix an arbitrary $m \in \N$ and we first estimate 
$T_+ \left( v + \sum_{k=1}^m w_k \right)$. From (ii) it follows that
\begin{equation}\label{e:su1}
T_+ \left( v + \sum_{k=1}^m w_k \right) \leq T_+ (v) + \sum_{k=1}^m T_+(w_k) \,.
\end{equation}
{}From \eqref{e:propT} and \eqref{e:propw} we deduce that
\begin{equation}\label{e:su2}
\left\| \sum_{k=1}^m T_+ (w_k) \right\|_{L^1(\R^N \setminus V)} \leq \sum_{k=1}^m P_1 \| w_k \|_{L^1(\R^N)} \leq 2P_1 \| u \|_{L^1(\R^N)} \,.
\end{equation}
Moreover, noticing that \eqref{e:propv} implies
$$
\| v \|_{L^2(\R^N)} \leq \left( 2^N \alpha \|u\|_{L^1(\R^N)} \right)^{1/2} \,,
$$
and using assumption (iv), we obtain
\begin{equation}\label{e:su3}
\left\| T_+ (v) \right\|_{L^2(\R^N)} \leq P_2 \left( 2^N \alpha \|u\|_{L^1(\R^N)} \right)^{1/2} \,.
\end{equation}
For every $\lambda > 0$ we can estimate
\begin{eqnarray}\label{e:estsuplev}
&& \Leb^N \left( \left\{ x \in \R^N \; : \; T_+ \biggl( v + \sum_{k=1}^m w_k \biggr)(x) > \lambda \right\} \right) \nonumber \\
& \stackrel{\eqref{e:su1}}{\leq} & \Leb^N \left( \left\{ x \in \R^N \; : \; T_+(v)(x) > \frac{\lambda}{2} \right\} \right) +
\Leb^N \left( \left\{ x \in V \; : \; \sum_{k=1}^m T_+ (w_k)(x) > \frac{\lambda}{2} \right\} \right) \nonumber  \\
&& + \Leb^N \left( \left\{ x \not \in V \; : \; \sum_{k=1}^m T_+ (w_k)(x) > \frac{\lambda}{2} \right\} \right) \\
& \stackrel{\eqref{e:su3}, \eqref{e:su2}}{\leq} & \frac{1}{(\lambda/2)^2} P_2^2 2^N \alpha \| u \|_{L^1(\R^N)} 
+ \Leb^N(V) + \frac{1}{\lambda/2} 2 P_1 \| u \|_{L^1(\R^N)} \nonumber \\
& \stackrel{\eqref{e:measV}}{\leq} & \left[ \frac{\alpha}{\lambda^2} 2^{N+2} P_2^2 
+ \frac{1}{\alpha} 2^N \beta_N  + \frac{1}{\lambda} 4 P_1 \right] \| u \|_{L^1(\R^N)} \,.\nonumber 
\end{eqnarray}
Now, according to \eqref{e:defwk}, \eqref{e:defv} we have 
$$
\left| \sum_{k=1}^m w_k \right| \leq |u| + |v| \in L^2 (\R^N),
$$
and since by \eqref{eq:ueqsum} $\sum_{k=1}^m w_k \to \sum_{k=1}^\infty w_k=u-v$ a.e.~in $\R^N$,
we deduce by Lebesgue's theorem that, as $m\rightarrow \infty$,
$$
v + \sum_{k=1}^m w_k \to v + \sum_{k=1}^\infty w_k = u \qquad \text{ in $L^2(\R^N)$.}
$$ 
According to assumption (iv) and to \eqref{e:prelsub}, this implies that
$$
T_+ \left( v + \sum_{k=1}^m w_k \right) \to T_+ (u) \qquad \text{ in $L^2(\R^N)$.}
$$
Then (up to the extraction of a subsequence) we also have $T_+ \left( v + \sum_{k=1}^m w_k \right) 
\to T_+ (u)$ pointwise a.e.~in $\R^N$, and this implies that
$$ \ind_{\{ x \, : \, T_+ ( u )(x) > \lambda \}}
\leq\liminf_{m\rightarrow\infty}\,\ind_{\{ x \, : \, T_+ ( v + \sum_{k=1}^m w_k )(x) > \lambda \}}
\qquad\text{for a.e. } x\in\R^N. $$
Using Fatou's lemma and \eqref{e:estsuplev}, we get
\begin{equation}\label{e:toopt}
\Leb^N \left( \left\{ x \in \R^N \; : \; T_+ \left( u \right)(x) > \lambda \right\} \right) 
\leq \left[ \frac{\alpha}{\lambda^2} 2^{N+2} P_2^2 
+ \frac{1}{\alpha} 2^N \beta_N  + \frac{1}{\lambda} 4 P_1 \right] \| u \|_{L^1(\R^N)} \,.
\end{equation}
Since $\lambda,\alpha > 0$ are arbitrary, we choose $\alpha$ in order to optimize the estimate \eqref{e:toopt},
by setting
$ \alpha = \sqrt{\beta_N} \lambda/2 P_2$.
This yields that for any $\lambda>0$
$$ \Leb^N \left( \left\{ x \in \R^N \; : \; T_+ \left( u \right)(x) > \lambda \right\} \right) 
\leq \frac{1}{\lambda} \left[ 2^{N+2} \sqrt{\beta_N} P_2 + 4P_1 \right] \| u \|_{L^1(\R^N)} \,, $$
which is the thesis of the lemma.
\end{proof}

\section{Cancellations in maximal functions and singular integrals}\label{Cancellations}

In this section we provide a key estimate that states that
there are some cancellations in the composition of a singular
integral operator and a maximal function.

The idea of such cancellation is the following.
To simplify, consider singular integral operators associated to smooth kernels $K$,
such that $\widehat K\in C^\infty(\R^N\backslash\{0\})$, with
\begin{equation}
	\Bigl|\partial_\alpha\widehat K(\xi)\Bigr|\leq\frac{C_{|\alpha|}}{|\xi|^{|\alpha|}}
	\qquad\text{for all }\alpha\in\N^N \text{ and }\xi\in\R^N\backslash\{0\}.
	\label{eq:Fouriermultiplier}
\end{equation}
It is well-known that such $K$ is a singular kernel satisfying the conditions of Definition~\ref{d:fundker},
thus to $K$ we can associate a singular integral operator $S$, that
satisfies the weak estimate \eqref{e:defint1}. Now, if $K_1$ and
$K_2$ are two such operators, we can consider the composition
$S_2S_1$. Then, we can see that for all $u\in L^2$,
$S_2S_1u=Su$, where $S$ is associated to the kernel $K$ defined
by $\widehat K=\widehat K_2\widehat K_1$, that
again satisfies \eqref{eq:Fouriermultiplier}. Therefore,
$S_2S_1$ also satisfies the weak estimate \eqref{e:defint1}.
However, it is not possible to get this information just by
composition, since when $u\in L^1\cap L^2$, $S_1u$ is not controlled
in $L^1$ by the $L^1$ norm of $u$. The explanation of this
phenomenon lies in the cancellations that hold in the composition
$S_2S_1$ (i.e. in the formal convolution $K_2*K_1$), due to 
condition \eqref{e:fund2}.

The main result of this section, Theorem~\ref{t:mainest} states
that the same kind of cancellation occurs in the composition of
a singular integral operator by a maximal function.
However, the usual maximal function \eqref{eq:Mf} is too rough to allow
such cancellation. Therefore we consider now {\em smooth maximal functions},
and moreover we put the absolute value {\em outside the integral},
instead of inside the integral as in \eqref{eq:Mf}. This smooth maximal function
is also known as {\em grand maximal function}, and is an important
tool in the theory of Hardy spaces (see for instance \cite{semmes}).

\begin{definition}\label{d:rhomax} Given a family of functions $\{\rho^\nu\}_\nu \subset L^\infty_c(\R^N)$, for every function $u \in L^1_\loc (\R^N)$ we define the {\em $\{\rho^\nu\}$-maximal function} of $u$ as
\begin{equation}
	M_{\{\rho^\nu\}} (u) (x) = \sup_\nu \sup_{\eps > 0} \left| \int_{\R^N} \rho^\nu_\eps (x-y) u(y) \, dy \right|
	=\sup_\nu \sup_{\eps > 0} \Bigl|(\rho^\nu_\eps*u)(x)\Bigr|
	\qquad \text{ for every $x \in \R^N$},
	\label{eq:maxrho}
\end{equation}
where we use the notation
$$ \rho^\nu_\eps (x)\equiv\frac{1}{\eps^N}\rho^\nu\left(\frac{x}{\eps}\right) \,. $$
In the case when $\{\rho^\nu\}_\nu \subset C^\infty_c(\R^N)$,
we can use the same definition in the case of distributions $u \in \D'(\R^N)$,
more precisely we set
$$ M_{\{\rho^\nu\}} (u) (x) = \sup_\nu \sup_{\eps > 0} \Bigl| \langle u , \rho^\nu_\eps (x-\cdot) \rangle  \Bigr|
	\qquad \text{ for every $x \in \R^N$.} $$
\end{definition}
\begin{remark} Taking $\rho^\nu(x)=\ind_{|x|<1}/\Leb^N(B_1)$ in \eqref{eq:maxrho} gives
the maximal function \eqref {eq:Mf}, except that now the absolute
value is outside the integral.
\label{Rk weightindicator}
\end{remark}

The announced cancellation between the singular integral and this maximal function are described in the following theorem.

\begin{theorem}\label{t:mainest}
Let $K$ be a singular kernel of fundamental type as in Definition~\ref{d:fundker} and set $Su = K * u$ for every $u \in L^2(\R^N)$.
Let $\{\rho^\nu\}_\nu \subset L^\infty(\R^N)$ be a family of kernels such that 
\begin{equation}\label{e:unifrho}
\spt \rho^\nu \subset B_1 \qquad\text{and}\qquad \|\rho^\nu\|_{L^1(\R^N)} \leq Q_1 \qquad 
\text{for every $\nu$.}
\end{equation}
Assume that for every $\eps > 0$ and for every $\nu$, there holds
$\Big( \eps^N K(\eps \cdot) \Big) * \rho^\nu \in C_b(\R^N)$
with the norm estimate
\begin{equation}\label{e:normcb}
\left\| \Big( \eps^N K(\eps \cdot) \Big) * \rho^\nu \right\|_{C_b(\R^N)} \leq Q_2
\qquad\text{for every }\eps > 0 \text{ and every }\nu.
\end{equation}
Then the following estimates hold:
\begin{itemize}
\item[(i)] There exists a constants $C_N$, depending on the dimension $N$ only, such that
\begin{equation}\label{e:weakest}
\lpn M_{\{\rho^\nu\}} (Su) \rpn_{M^1(\R^N)} 
\leq C_N\Bigl( Q_2 + Q_1\bigl( C_0 + C_1+\|\widehat K\|_\infty\bigr) \Bigr) \|u\|_{L^1(\R^N)}
\quad \text{for every $u \in L^1 \cap L^2 (\R^N)$;}
\end{equation}
\item[(ii)] If in addition $\{\rho^\nu\} \subset C^\infty_c (\R^N)$,
the estimate \eqref{e:weakest} holds for all $u$ finite measure
on $\R^N$, with the same constant $C_N$,
where $Su$ is defined as a distribution, according to \eqref{eq:S^D};
\item[(iii)] If $Q_3\equiv\sup_\nu \| \rho^\nu \|_{L^\infty(\R^N)}$ is finite,
then there exists a constant $C_N$, depending on the dimension $N$, such that
\begin{equation}\label{e:strongest}
\left\| M_{\{\rho^\nu\}} (Su) \right\|_{L^2(\R^N)} 
\leq C_NQ_3 \|\widehat K\|_\infty \|u\|_{L^2(\R^N)}
\qquad \text{for every $u \in L^2 (\R^N)$.}
\end{equation}
\end{itemize}
\end{theorem}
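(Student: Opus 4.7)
I plan to prove all three parts via the Calder\'on--Zygmund interpolation Lemma~\ref{l:interp}, applied to the composed operator $T_+ := M_{\{\rho^\nu\}} \circ S$. For (iii), the hypotheses $\spt\rho^\nu\subset B_1$ and $\|\rho^\nu\|_{L^\infty}\leq Q_3$ give the pointwise domination $|\rho^\nu_\eps(y)|\leq Q_3\eps^{-N}\ind_{B_\eps}(y)$, so Lemma~\ref{l:lemmapsi} yields $M_{\{\rho^\nu\}}(f)\leq C_N Q_3\,Mf$; composition with the Plancherel bound $\|S\|_{L^2\to L^2}\leq\|\widehat K\|_\infty$ and Proposition~\ref{p:classmax} completes the proof of (iii).

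For (i), I verify the five hypotheses of Lemma~\ref{l:interp} for $T_+$. Nonnegativity, sublinearity and homogeneity are immediate from the definition of $M_{\{\rho^\nu\}}$ and the linearity of $S$. For the $L^2\to L^2$ bound (iv), I would first reduce to the subcase $\sup_\nu\|\rho^\nu\|_{L^\infty}<\infty$ by mollifying $\rho^\nu\mapsto\rho^\nu*\eta_\delta$: the smoothed family has $\|\rho^{\nu,\delta}\|_{L^\infty}<\infty$ while preserving the parameters $Q_1$ and $Q_2$ (since $\|\eta_\delta\|_{L^1}=1$), so (iii) supplies the required $L^2$ bound. Once Lemma~\ref{l:interp} is applied to the mollified family, I would pass $\delta\to 0$ via the Fatou property $\lpn\liminf f_n\rpn_{M^1}\leq\liminf\lpn f_n\rpn_{M^1}$, using that the final constant depends only on the $\delta$-stable quantities $Q_1, Q_2, C_0, C_1$ and $\|\widehat K\|_\infty$ thanks to the analysis of (v) below.

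The core step is verifying the H\"ormander cancellation condition (v). Set $\kappa^{\nu,\eps}:=\rho^\nu_\eps*K$. From the hypotheses I extract three pointwise bounds: the global estimate $|\kappa^{\nu,\eps}(z)|\leq Q_2\eps^{-N}$ via the rescaling $\kappa^{\nu,\eps}(z)=\eps^{-N}(K_\eps*\rho^\nu)(z/\eps)$; and, for $|z|>2\eps$, the size and gradient estimates $|\kappa^{\nu,\eps}(z)|\leq C_N Q_1 C_0|z|^{-N}$ and $|\nabla\kappa^{\nu,\eps}(z)|\leq C_N Q_1 C_1|z|^{-N-1}$ obtained from Definition~\ref{d:fundker} by differentiating under the convolution away from the singularity. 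For $u$ supported in $\overline B_R(x_0)$ with $\int u=0$, the mean-zero identity
$$ (\rho^\nu_\eps*Su)(x) = \int u(y)\,\bigl[\kappa^{\nu,\eps}(x-y)-\kappa^{\nu,\eps}(x-x_0)\bigr]\,dy $$
and Fubini reduce (v) to the uniform bound $\int_{|z|>2R}\sup_{\nu,\eps}|\kappa^{\nu,\eps}(z-y')-\kappa^{\nu,\eps}(z)|\,dz\leq C_N\bigl(Q_2+Q_1(C_0+C_1)\bigr)$ for $|y'|\leq R$. I split according to the size of $\eps$: on $\{\eps\leq|z|/4\}$ the gradient bound gives the standard $|y'|/|z|^{N+1}$ integrand, integrable over $\{|z|>2R\}$; on $\{\eps>|z|/4\}$ one combines the bound $Q_2\eps^{-N}\lesssim|z|^{-N}$ with the cancellation of $K$ itself, via the rewriting $\kappa^{\nu,\eps}(z)-\kappa^{\nu,\eps}(z-y') = \int\rho^\nu_\eps(z-w)[K(w)-K(w-y')]\,dw$ and the Calder\'on--Zygmund translation-difference estimate from Definition~\ref{d:ker}(ii).

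Finally, (ii) follows from (i) by approximation: for $u$ a finite measure and $\rho^\nu\in C^\infty_c$, one mollifies $u_j:=u*\eta_{1/j}\in L^1\cap L^2$ with $\|u_j\|_{L^1}\leq\|u\|$, applies (i) to each $u_j$, and passes to the limit through Fatou in the $M^1$ pseudonorm, using the pointwise convergence $(\rho^\nu_\eps*Su_j)(x)\to\int\kappa^{\nu,\eps}(x-z)\,du(z)=(\rho^\nu_\eps*S^Du)(x)$, valid since $\kappa^{\nu,\eps}\in C_0(\R^N)$ when $\rho^\nu\in C^\infty_c$. I expect the main obstacle to be the large-$\eps$ regime of (v): the gradient bound on $\kappa^{\nu,\eps}$ is only valid for $|z|>2\eps$, and the pure size estimate $Q_2\eps^{-N}$ does not decay in $|z|$, so one must extract the cancellation from the base kernel $K$ itself rather than from the smoothness of the mollified convolution at its natural scale.
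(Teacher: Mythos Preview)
Your approaches to (iii) and (ii) are correct and match the paper's. The difficulty is with (i), where both steps you flag---the $L^2$ bound (iv) and the large-$\eps$ regime of (v)---contain genuine gaps.

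\textbf{The $L^2$ bound.} Your proposal obtains $P_2$ from part (iii), giving $P_2 = C_N Q_3 \|\widehat K\|_\infty$. But the output of Lemma~\ref{l:interp} is $\lpn T_+(u)\rpn_{M^1} \leq C_N(P_1+P_2)\|u\|_{L^1}$, so your $M^1$ constant depends on $Q_3$, contrary to the statement of (i). The mollification-then-Fatou trick does not repair this: after replacing $\rho^\nu$ by $\rho^\nu*\eta_\delta$ you have $Q_3^{(\delta)}\leq Q_1\|\eta_\delta\|_{L^\infty}\sim Q_1\delta^{-N}$, so the constant produced by Lemma~\ref{l:interp} for the mollified operator blows up as $\delta\to 0$. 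Your assertion that ``the final constant depends only on the $\delta$-stable quantities \ldots\ thanks to the analysis of (v)'' is wrong: the analysis of (v) controls only $P_1$, and $P_2$ enters the conclusion with equal weight.

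\textbf{The large-$\eps$ regime of (v).} Your Case~1 ($\eps\leq|z|/4$) is fine. In Case~2 you invoke the rewriting $\kappa^{\nu,\eps}(z)-\kappa^{\nu,\eps}(z-y') = \int\rho^\nu_\eps(z-w)\bigl[K(w)-K(w-y')\bigr]\,dw$ and the H\"ormander condition on $K$. But you need the \emph{supremum} over $\nu,\eps$ inside the $dz$-integral before integrating, so you cannot Fubini and absorb $\rho^\nu_\eps$ via $\|\rho^\nu_\eps\|_{L^1}\leq Q_1$; and you have no uniform pointwise bound on $|\rho^\nu_\eps|$ to pull out. Even ignoring the supremum, the full integral $\int_{\R^N}|K(w)-K(w-y')|\,dw$ is infinite: Definition~\ref{d:ker}(ii) controls only $|w|>2|y'|$, and the integrand is singular near $w=0$ and $w=y'$. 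The size estimate $|\kappa^{\nu,\eps}|\leq Q_2\eps^{-N}\lesssim Q_2|z|^{-N}$ alone is not integrable over $\{|z|>2R\}$. In fact one can check that $\sup_{\nu,\eps}|\kappa^{\nu,\eps}(z)-\kappa^{\nu,\eps}(z-y')|$ genuinely behaves like $|z|^{-N}$, so the uniform H\"ormander condition for the composed kernel $\kappa^{\nu,\eps}$ fails.

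\textbf{How the paper resolves both issues.} The paper does \emph{not} apply Lemma~\ref{l:interp} to $M_{\{\rho^\nu\}}\circ S$. It introduces a fixed smooth cutoff $\chi$ (zero near the origin, one far away) and writes
\[
\rho^\nu_\eps*K \;=\; \Delta^\nu_\eps \;+\; \Bigl(\textstyle\int\rho^\nu\Bigr)\,\chi(\cdot/\eps)\,K,
\]
proving the pointwise bound $|\Delta^\nu_\eps(x)|\leq C_N\bigl(Q_2+Q_1(C_0+C_1)\bigr)\,\eps^{-N}\bigl(1+|x|/\eps\bigr)^{-(N+1)}$, whence $\sup_{\nu,\eps}|\Delta^\nu_\eps*u|\leq C_N\bigl(Q_2+Q_1(C_0+C_1)\bigr)\,Mu$ by Lemma~\ref{l:lemmapsi}. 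This piece is thus controlled simultaneously in $M^1$ and $L^2$ via Proposition~\ref{p:classmax}, with no interpolation needed. Lemma~\ref{l:interp} is applied only to the remaining, $\rho^\nu$-\emph{independent} maximal truncated singular integral $T_+(u)=\sup_\eps\bigl|(\chi(\cdot/\eps)K)*u\bigr|$. For this operator, (v) is the standard H\"ormander estimate for a smoothly truncated kernel (the cutoff $\chi$ localizes cleanly, so there is no large-$\eps$ obstacle), and the $L^2$ bound (iv) is obtained by running the same decomposition \emph{backwards} with a single fixed $\tilde\rho\in C^\infty_c$, yielding $P_2=C_N(C_0+C_1+\|\widehat K\|_\infty)$---no $Q_3$ anywhere. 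Decoupling the $\rho^\nu$-dependence from the interpolation step is precisely what makes the constant in \eqref{e:weakest} independent of $Q_3$.
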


\begin{remark}
The assumption \eqref{e:normcb} on $\rho^\nu$ is a regularity assumption.
For instance, it is satisfied if $\rho^\nu \in H^q$ for some $q > N/2$ with
uniform bounds.
Indeed, in this case we have $\widehat{\rho}^\nu\in L^1$
with uniform bounds, thus since $\widehat K\in L^\infty$, we get that
$\widehat K(\xi/\eps)\widehat{\rho}^\nu(\xi)\in L^1$
with uniform bounds.
\label{Rk Hq}
\end{remark}

\begin{proof}[Proof of Theorem~\ref{t:mainest}]

\medskip

{\sc Step 1. Definition of the quantity $\Delta_\eps^\nu $ and estimates.}
Fix a radial function $\chi \in C^\infty(\R^N)$, $0\leq\chi\leq 1$, such that $\chi(x) = 0$ for $|x| \leq 1/2$, $\chi(x) = 1$ for $|x| \geq 1$, and with $\| \nabla \chi\|_\infty \leq 3$. We define
\begin{equation}\label{e:deltaeps}
\Delta_\eps^\nu  (x) = \Big[ K(\eps\cdot) * \rho^\nu  \Big] \left( \frac{x}{\eps} \right) - \left( \int_{\R^N} \rho^\nu (y) \, dy \right) \chi\left( \frac{x}{\eps} \right) K(x) \,.
\end{equation}
{}From assumption \eqref{e:normcb}, the definition of $\chi$ and assumptions \eqref{e:unifrho} and \eqref{e:fund0}, we deduce that for every $\eps>0$ and every $\nu$,
we have $\Delta_\eps^\nu\in C_b(\R^N)$ and
\begin{equation}\label{e:deltainfty}
| \Delta_\eps^\nu  (x) | \leq \frac{Q_2}{\eps^N} + \frac{Q_1 C_0 2^N}{\eps^N} \qquad \text{for every $x \in \R^N$.}
\end{equation}
For $|x| > 2\eps$ we have
$$ \Delta_\eps^\nu  (x) = \int_{\R^N} \big[ K(y) - K(x) \big] \rho^\nu _\eps (x-y) \, dy \,, $$
and this implies the estimate
\begin{equation}\label{e:deltaone}
|\Delta_\eps^\nu  (x) | \leq \int_{\R^N} \frac{C_1 |y-x|}{(|x| / 2)^{N+1}} |\rho^\nu _\eps (x-y)| \, dy \leq \frac{2^{N+1} C_1 Q_1\eps}{|x|^{N+1}},
\qquad \text{for every $x \in \R^N$ with $|x| > 2\eps$}.
\end{equation}
In obtaining \eqref{e:deltaone} we have used assumptions \eqref{e:fund1} and \eqref{e:unifrho},
the fact that the integral is indeed performed on the set $B_\eps(x)$,
and that for all $x$ and $y$ under consideration we have
for every $s \in [0,1]$
$$ |x+s(y-x)| \geq|x|-|y-x|\geq|x|-\eps\geq|x|-|x|/2= |x| / 2 \,. $$
Putting together \eqref{e:deltainfty} and \eqref{e:deltaone},
we obtain the existence of a dimensional constant $C_N$ such that
$$
| \Delta_\eps^\nu  (x) | \leq 
C_N \frac{Q_2 + Q_1(C_0 + C_1)}{\eps^N \left( 1 + \left( \frac{|x|}{\eps} \right)^{N+1} \right)} \,
\qquad \text{for every $x \in \R^N$}.
$$
This in particular gives $\Delta_\eps^\nu  \in L^1 \cap L^\infty (\R^N)$.
Applying Lemma~\ref{l:lemmapsi} with
$$ \psi(z) = C_N \frac{Q_2 + Q_1(C_0 + C_1)}{ \left( 1 + z^{N+1} \right)} \,, $$
we deduce the existence of a dimensional constant $C_N$ such that
for every $u \in L^1_\loc(\R^N)$,
\begin{equation}\label{e:deltaest}
\sup_{\nu}\sup_{\eps > 0} |(\Delta^\nu _\eps * u)(x)| \leq C_N \big( Q_2 + Q_1( C_0 + C_1 ) \big) Mu (x)
\qquad \text{for every $x \in \R^N$}.
\end{equation}
Thus, recalling Proposition~\ref{p:classmax}, we deduce that
\begin{equation}\label{e:weakdelta}
\lpn \sup_{\nu} \sup_{\eps > 0} |\Delta^\nu _\eps * u| \rpn_{M^1(\R^N)}
\leq C_N \big( Q_2 + Q_1( C_0 + C_1 ) \big) \| u \|_{L^1(\R^N)} 
\qquad \text{for every $u \in L^1(\R^N)$}
\end{equation}
and
$$
\left\| \sup_{\nu} \sup_{\eps > 0} |\Delta^\nu _\eps * u| \right\|_{L^2(\R^N)}
\leq C_N \big( Q_2 + Q_1( C_0 + C_1 ) \big) \| u \|_{L^2(\R^N)} 
\qquad \text{for every $u \in L^2(\R^N)$}.
$$

\medskip

{\sc Step 2. Definition of the operator $T_+$, interpolation lemma and conclusion of the proof.}
We first notice that from the definition of $\Delta_\eps^\nu $ in \eqref{e:deltaeps}
it follows that
\begin{equation}\label{e:split}
\big( \rho_\eps^\nu  * K \big) (x) = \Delta_\eps^\nu  (x) + \left( \int_{\R^N} \rho^\nu (y) \, dy \right)
\chi \left( \frac{x}{\eps} \right) K(x)\qquad\text{for every }x\in\R^N.
\end{equation}
Then, since $\rho_\eps^\nu  * K\in L^2$, we have
\begin{equation}\label{e:associativity}
\rho_\eps^\nu * \left( Su \right) = \left( \rho_\eps^\nu * K \right) * u
\qquad\text{for every $u \in L^2(\R^N)$},
\end{equation}
as can be easily seen by using the Fourier transform.
Therefore, according to \eqref{eq:maxrho},
\begin{equation}
	M_{\{\rho^\nu\}} (Su)
	=\sup_\nu \sup_{\eps > 0} \Bigl|\rho^\nu_\eps*(Su)\Bigr|
	=\sup_\nu \sup_{\eps > 0} \Bigl|\left( \rho_\eps^\nu * K \right) * u\Bigr|
	\qquad\text{for every $u \in L^2(\R^N)$},
	\label{eq:MSu}
\end{equation}
and the left-hand side of \eqref{e:split} is precisely the kernel we have to study.
The term $\Delta_\eps^\nu$ in \eqref{e:split} has been treated in Step 1.
Thus since $\left| \int \rho^\nu (y) \, dy \right| \leq Q_1$,
it remains to study the operator 
\begin{equation}\label{e:opt}
T_+ (u) \equiv \sup_{\eps > 0} \biggl| \left( \chi\left( \frac{\cdot}{\eps} \right) K \right) * u \biggr|
\qquad \text{ for } u\in L^2(\R^N).
\end{equation}
We are going to apply the interpolation Lemma~\ref{l:interp} to the operator $T_+$.
The only assumptions of the lemma that are not immediate are (iv) and (v),
which will be checked in Steps 4 and 3 respectively, obtaining constants
$P_1=C_N(C_0+C_1)$ and $P_2=C_N(C_0+C_1+\|\widehat K\|_\infty)$.
Assuming for a moment these estimates, Lemma~\ref{l:interp} yields that
\begin{equation}\label{e:weakT}
\lpn T_+ (u) \rpn_{M^1(\R^N)} \leq C_N\Bigl(C_0+C_1+\|\widehat K\|_\infty\Bigr) \| u \|_{L^1(\R^N)}
\qquad\text{ for every }u\in L^1\cap L^2(\R^N).
\end{equation}
This, together with \eqref{eq:MSu}, \eqref{e:split}, \eqref{e:weakdelta}
and recalling \eqref{e:care}, gives \eqref{e:weakest} and proves (i).

Then, in order to prove (ii), given a finite measure $u$,
take a smoothing sequence $\zeta_n$ and define $u_n\equiv\zeta_n * u$.
Then since $u_n\in L^1\cap L^2(\R^N)$,
we can apply \eqref{e:weakest} to $u_n$.
We observe that $Su_n\rightarrow Su$ in $\S'(\R^N)$. Therefore, for fixed $\eps$, $\nu$
and $x$, $(\rho_\eps^\nu * (Su_n))(x)\rightarrow (\rho_\eps^\nu * (Su))(x)$
as $n\rightarrow\infty$. This implies that
$$ \ind\!\left\{\sup_\nu\sup_{\eps>0}\Bigl|\rho_\eps^\nu * (Su)\Bigr|>\lambda\right\}
	\leq\liminf_{n\rightarrow\infty}\,\ind\!\left\{\sup_\nu\sup_{\eps>0}\Bigl|\rho_\eps^\nu * (Su_n)\Bigr|>\lambda\right\}
	\qquad\text{for all }\lambda>0 \,.
$$
By applying Fatou's lemma, we conclude that property (ii) holds.

Finally, to show (iii), we observe that
$$ | \rho^\nu (x) | \leq Q_3\, \ind_{B_1}(x) \qquad \text{ for a.e.~$x \in \R^N$,} $$
which implies according to Remark~\ref{Rk weightindicator} that for all $u\in L^2(\R^N)$
\begin{equation}\label{e:forstrongest}
M_{\{\rho^\nu\}} (u)(x) \leq Q_3 \Leb^N(B_1) Mu (x)
\qquad\text{ for every $x\in\R^N$}.
\end{equation}
It follows from Proposition~\ref{p:classmax} that
$$ \|M_{\{\rho^\nu\}} (u)\|_{L^2}\leq Q_3 \Leb^N(B_1) \|Mu\|_{L^2}
	\leq C_NQ_3\|u\|_{L^2}
	\qquad\text{for every }u\in L^2(\R^N) \,. $$
Combining this with the trivial estimate
$\|Su\|_2\leq\|\widehat K\|_\infty\|u\|_2$ yields \eqref{e:strongest}
and property (iii).

\medskip

{\sc Step 3. Checking of assumption (v) of Lemma~\ref{l:interp}.}
In this step we show the existence of a constant $P_1$ such that
if $u \in L^2(\R^N)$ satisfies $\spt u \subset \overline{B}_R(x_0)$
and $\int_{\R^N} u = 0$ then
\begin{equation}
	\int_{|x-x_0| > 2R} T_+(u) \, dx \leq P_1 \|u\|_{L^1(\R^N)} \,.
	\label{eq:estT+1}
\end{equation}
Since $\spt u \subset \overline{B}_R(x_0)$ and $\int_{\R^N} u=0$,
we can write for $x$ such that $|x-x_0|>2R$,
\begin{equation}\begin{array}{l}
	\dsp\hphantom{=\ } \Biggl(\left( \chi\Bigl( \frac{\cdot}{\eps} \Bigr) K \right) * u\Biggr) (x) \\
\dsp = \int_{|y-x_0| \leq R} \left[ \chi\left( \frac{x-y}{\eps} \right) K(x-y) - \chi \left( \frac{x-x_0}{\eps} \right) K(x-x_0) \right] u(y) \, dy \\
\dsp= \int_{|y-x_0| \leq R} \chi\left( \frac{x-y}{\eps} \right) \Bigl[  K(x-y) - K(x-x_0)\Bigr] u(y) \, dy \\
\dsp\hphantom{=\ } + \int_{|y-x_0| \leq R} \left[ \chi\left( \frac{x-y}{\eps} \right) - \chi\left( \frac{x-x_0}{\eps} \right) \right]  K(x-x_0)  u(y) \, dy \,.
	\label{eq:estregchi}
	\end{array}
\end{equation}
In order to get an estimate for \eqref{e:opt},
we are going to estimate separately the two terms
in the last line in \eqref{eq:estregchi}.
We are interested only in those $x$ and $y$
which satisfy $|x-x_0|>2R$ and $|y-x_0| \leq R$,
which implies that for any $0\leq s\leq 1$, we have
\begin{equation}
	|x-x_0 +s(x_0-y)|\geq |x-x_0|-|x_0-y|\geq |x-x_0|-R
	\geq |x-x_0|-|x-x_0|/2=|x-x_0|/2.
	\label{eq:estx-y}
\end{equation}
\noindent For the first term in \eqref{eq:estregchi},
using \eqref{e:fund1} we can estimate the variation of $K$ as
$$ | K(x-y) - K(x-x_0) | \leq \int_0^1 \frac{ C_1 |y-x_0| }{ |x-x_0 +s(x_0-y)|^{N+1} } \, ds \leq \frac{2^{N+1} C_1 R }{|x-x_0|^{N+1}} \,. $$
Thus we obtain
\begin{equation}\begin{array}{l}
	\dsp\hphantom{\leq\ }  \left| \int_{|y-x_0| \leq R} \chi\left( \frac{x-y}{\eps} \right) \Bigl[  K(x-y) - K(x-x_0)\Bigr] u(y) \, dy \right| \\
\dsp \leq  \int_{|y-x_0| \leq R} \frac{2^{N+1} C_1 R }{|x-x_0|^{N+1}} |u(y)| \, dy 
\;\; = \;\;  \frac{2^{N+1} C_1 R }{|x-x_0|^{N+1}}\| u \|_{L^1(\R^N)} \,.
	\label{eq:estKvari}
	\end{array}
\end{equation}
In order to estimate the second term, we first notice that
\begin{equation}
	\left| \chi\left( \frac{x-y}{\eps} \right) - \chi \left( \frac{x-x_0}{\eps} \right) \right| \leq \| \nabla \chi\|_\infty \frac{R}{\eps} \,.
	\label{eq:esttranschi}
\end{equation}
Moreover, the above variation of $\chi$ vanishes as soon as
$$ \left| \frac{x-y}{\eps} \right| > 1 \qquad \text{ and } \qquad \left| \frac{x-x_0}{\eps} \right| > 1 \,. $$
Since from \eqref{eq:estx-y} we have $|x-x_0| \leq 2|x-y| $, we deduce that
whenever $|x-y|\leq \eps$ or $|x-x_0|\leq \eps $ we have $\eps\geq  |x-x_0|/2$.
This improves \eqref{eq:esttranschi} to
$$ \left| \chi\left( \frac{x-y}{\eps} \right) - \chi \left( \frac{x-x_0}{\eps} \right) \right| \leq \| \nabla \chi\|_\infty \frac{2R}{|x-x_0|} \,. $$
Thus, we estimate
\begin{equation}\begin{array}{l}
	\dsp\hphantom{\leq\ } \left| \int_{|y-x_0| \leq R} \left[ \chi\left( \frac{x-y}{\eps} \right) - \chi\left( \frac{x-x_0}{\eps} \right) \right]  K(x-x_0)  u(y) \, dy \right| \\
\dsp\vphantom{\Biggl|} \leq  \int_{|y-x_0| \leq R}   
\frac{6R}{|x-x_0|}\frac{C_0}{|x-x_0|^N}|u(y)| \, dy\\
\dsp\vphantom{\Biggl|} =  \frac{6 C_0 R }{|x-x_0|^{N+1}}\| u \|_{L^1(\R^N)} \,.
	\label{eq:est2chi}
	\end{array}
\end{equation}
Therefore, \eqref{eq:estKvari} and \eqref{eq:est2chi} yield
$$ T_+(u)(x)\leq \Bigl(6 C_0+2^{N+1}C_1\Bigr)\frac{R}{|x-x_0|^{N+1}}\| u \|_{L^1(\R^N)} \quad\text{for every $x$ such that $|x-x_0|>2R$}. $$
Next we integrate over $|x-x_0| > 2R$, and we obtain
\eqref{eq:estT+1} with $P_1=C_N(C_0+C_1)$.

\medskip

{\sc Step 4. Checking of assumption (iv) of Lemma~\ref{l:interp}.}
Let us fix a nonnegative convolution kernel $\widetilde{\rho}\in C^\infty_c(B_1)$
with $\int_{\R^N} \widetilde \rho =1$. Defining $\widetilde{\Delta}_\eps$ as in \eqref{e:deltaeps}
with $\rho^\nu $ substituted with $\widetilde{\rho}$, the inequality \eqref{e:deltaest} obtained in Step 1
is valid, thus given $u\in L^2$,
\begin{equation}\label{e:redone}
\sup_{\eps > 0} \Bigl|(\widetilde{\Delta}_\eps * u)(x)\Bigr| \leq C_N \big( C_0 + C_1  + \| \widehat K \|_\infty\big) Mu (x) \,.
\end{equation}
Similarly, we can use estimate \eqref{e:forstrongest}, which yields
\begin{equation}
	\sup_{\eps>0}\left|(\widetilde\rho_\eps*Su)(x)\right|\leq C_N M(Su)(x).
	\label{eq:fulltilde}
\end{equation}
But from \eqref{e:split} and \eqref{e:associativity} it follows that
$$
T_+ (u)= \sup_{\eps > 0} \biggl| \left( \chi\left( \frac{\cdot}{\eps} \right) K \right) * u \biggr|
 \leq \sup_{\eps>0} \Bigl| \widetilde \Delta_\eps * u \Bigr|
+\sup_{\eps > 0} \Bigl| \widetilde \rho_\eps * (Su) \Bigr| \,.
$$
Therefore, \eqref{e:redone}, \eqref{eq:fulltilde} and Proposition~\ref{p:classmax} yield
$$ \left\| T_+ (u) \right\|_{L^2(\R^N)}
\leq C_N \big( C_0 + C_1 +  \| \widehat K \|_\infty\big) \| u \|_{L^2(\R^N)} \,, $$
which is precisely assumption (iv) of Lemma~\ref{l:interp}
with $P_2=C_N \big( C_0 + C_1 +  \| \widehat K \|_\infty\big)$.
\end{proof}
\begin{remark}
We remark that in Proposition 2 of Section I.7 of \cite{Steinbig},
an estimate on $T_+$ similar to \eqref{e:weakT} is given in the case
of a discontinuous cutoff function $\chi$, obtaining in fact pointwise bounds
on the truncated singular integral operator.
\end{remark}

\section{Estimate of difference quotients}\label{s:diffq}

This section is devoted to the proof of a generalization of
the estimate (that can be found for instance 
in \cite{Stein}) of the difference quotients of a given function
in terms of the maximal function of its gradient.
\begin{lemma} If $u\in BV(\R^N)$, then there exists 
an $\Leb^N$-negligible set ${\mathcal N} \subset \R^N$ such that
\begin{equation}
	|u(x) - u(y)| \leq C_N|x-y| \Bigl((MDu)(x)+(MDu)(y)\Bigr)
	\qquad\text{ for every $x$, $y \in \R^N \setminus {\mathcal N}$,}
	\label{eq:quotBV}
\end{equation}
where $Du$ stands for the distributional derivative of $u$,
which is a measure here,
and $M$ is the maximal function \eqref{eq:Mf}.
\label{Lemma quotients BV}
\end{lemma}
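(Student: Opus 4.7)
The plan is to establish the inequality at every pair of Lebesgue points of $u$, taking $\mathcal{N}$ to be the $\Leb^N$-negligible exceptional set provided by Lebesgue's differentiation theorem applied to $u \in L^1_{\loc}(\R^N)$. For $x, y \in \R^N \setminus \mathcal{N}$, set $r = |x-y|$ and insert the intermediate averages on $B_r(x)$, $B_{2r}(x)$, $B_r(y)$ to write
$$ u(x) - u(y) = \Bigl(u(x) - \mint_{B_r(x)} u\Bigr) + \Bigl(\mint_{B_r(x)} u - \mint_{B_{2r}(x)} u\Bigr) + \Bigl(\mint_{B_{2r}(x)} u - \mint_{B_r(y)} u\Bigr) + \Bigl(\mint_{B_r(y)} u - u(y)\Bigr). $$
The triangle inequality then reduces the task to bounding each of the four terms by a constant times $r\bigl(MDu(x) + MDu(y)\bigr)$.

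The core quantitative ingredient is the $BV$-Poincar\'e inequality on a ball $B_\rho(w)$,
$$ \mint_{B_\rho(w)} \Bigl| u(z) - \mint_{B_\rho(w)} u \Bigr|\,dz \leq C_N\, \rho\, \frac{|Du|(B_\rho(w))}{\Leb^N(B_\rho(w))} \leq C_N\, \rho\, MDu(w). $$
Since $B_r(x) \subset B_{2r}(x)$ and, using $|z-x|\leq|z-y|+|y-x|\leq 2r$ whenever $|z-y|\leq r$, also $B_r(y) \subset B_{2r}(x)$, both smaller balls have measure comparable to $B_{2r}(x)$. Applying the Poincar\'e estimate with $w=x$ and $\rho = 2r$ and losing only a dimensional factor $2^N$ when restricting the averaging domain, both middle terms are controlled by $C_N r\, MDu(x)$.

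For the outer terms I would use a dyadic telescoping. The same Poincar\'e argument applied at the scales $r/2^k$ gives
$$ \Bigl|\mint_{B_{r/2^{k+1}}(x)} u - \mint_{B_{r/2^k}(x)} u\Bigr| \leq C_N\,\frac{r}{2^k}\, MDu(x), $$
and summing the geometric series $\sum_k 2^{-k}$, together with the fact that $\mint_{B_{r/2^k}(x)} u \to u(x)$ because $x$ is a Lebesgue point of $u$, yields $|u(x) - \mint_{B_r(x)} u| \leq C_N\, r\, MDu(x)$. The analogous telescoping at $y$ controls the last term, and combining the four bounds yields \eqref{eq:quotBV}.

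I do not expect a serious obstacle: the only item needing mild care is the $BV$-Poincar\'e inequality on arbitrary balls (standard, obtainable by smooth approximation in $BV$ reducing to the classical Sobolev-Poincar\'e estimate), together with the routine observation that the maximal function $MDu$ defined for the finite vector measure $Du$, as introduced right after \eqref{eq:Mf}, is precisely the object that appears naturally when dividing $|Du|(B_\rho(w))$ by $\Leb^N(B_\rho(w))$.
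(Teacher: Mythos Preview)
Your proof is correct and follows the classical route (essentially the one in Stein): Lebesgue points, insertion of averages over nested balls, the $BV$-Poincar\'e inequality, and dyadic telescoping. Every step is sound, including the inclusions $B_r(x),\,B_r(y)\subset B_{2r}(x)$ and the geometric summation.

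The paper, however, does not prove this lemma directly. It quotes it as classical and then, in the remark following Proposition~\ref{p:estdiff}, recovers it as the special case $m=1$, $S_{jk}=\delta_0$ of that proposition, using the bound $M_{\{\Upsilon^{\xi,j}\}}(g_j)\leq C_N\,Mg_j$ from \eqref{e:forstrongest}. The mechanism there is different: one writes
\[
u(x)-u(y)=\int h_r\!\left(z-\tfrac{x+y}{2}\right)\bigl(u(x)-u(z)\bigr)\,dz
+\int h_r\!\left(z-\tfrac{x+y}{2}\right)\bigl(u(z)-u(y)\bigr)\,dz
\]
with a smooth kernel $h$, expands $u(x)-u(z)$ along the segment, and arrives at a convolution $\Upsilon^{\xi,j}_\eps * \partial_j u$ which is then dominated by a \emph{smooth} maximal function of $Du$. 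Your argument is more elementary and self-contained for the $BV$ case; the paper's detour through Proposition~\ref{p:estdiff} is what allows the generalization to derivatives of the form $S_{jk}g_{jk}$, where the cancellation between the smooth maximal operator and the singular integral (Theorem~\ref{t:mainest}) becomes essential. For the bare $BV$ statement either approach works equally well.
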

Our result below shows that it is possible to include
some singular integral operators in the gradient, without
any significant loss in the estimate \eqref{eq:quotBV}.
\begin{propos}[Estimate of difference quotients]\label{p:estdiff}
Let $u \in L^1_\loc(\R^N)$ and assume that for every $j = 1 , \ldots , N$ we have
\begin{equation}\label{e:derivative}
\partial_j u = \sum_{k=1}^m S_{jk} g_{jk} \qquad \text{in $\D'(\R^N)$,}
\end{equation}
where $S_{jk}$ are singular integral operators of fundamental type
in $\R^N$ and $g_{jk}$ are finite measures in $\R^N$, for all $j = 1,\ldots,N$, $k = 1,\ldots,m$,
and where $S_{jk}g_{jk}$ is defined in the distribution sense, as in \eqref{eq:S^D}.
Then there exists a nonnegative function $U \in M^1(\R^N)$ and an $\Leb^N$-negligible set ${\mathcal N} \subset \R^N$ such that
\begin{equation}\label{e:incremest}
|u(x) - u(y)| \leq |x-y| \Bigl(U(x) + U(y)\Bigr)
\qquad\text{ for every $x$, $y \in \R^N \setminus {\mathcal N}$.}
\end{equation}
Moreover, we can take $U$ explicitly given by
\begin{equation}\label{e:exprU}
U = \sum_{j=1}^N \sum_{k = 1}^m M_{\{\Upsilon^{\xi,j} , \; \xi \in {\mathbb S}^{N-1}\}} (S_{jk} g_{jk}) \,,
\end{equation}
where the maximal function relative to a family of kernels
is defined in Definition~\ref{d:rhomax},
the functions $\Upsilon^{\xi,j} \in C^\infty_c (\R^N)$ are explicitly defined
for $\xi\in{\mathbb S}^{N-1}$ and  $j = 1,\ldots,N$ by
\begin{equation}\label{e:ups}
\Upsilon^{\xi , j} (w) = h \left( \frac{\xi}{2} - w \right) w_j,
\end{equation}
and the kernel $h$ is chosen such that
\begin{equation}
	h \in C^\infty_c (\R^N),\qquad
	\int_{\R^N} h(y) \, dy = 1,\qquad
	\spt h \subset B_{1/2}.
	\label{eq:kernelh}
\end{equation}
\end{propos}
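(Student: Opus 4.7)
The plan is to adapt the classical argument for Lemma~\ref{Lemma quotients BV} by replacing the rough cube-average with a smooth average $A_r$ that can be recovered from either of the two endpoints. Fix $x\neq y$, set $\xi=(y-x)/|y-x|\in{\mathbb S}^{N-1}$ and $\eps=|y-x|$, and for $r\in(0,\eps]$ define
$$
A_r = \int_{\R^N} u(x+rw)\,h(\xi/2-w)\,dw.
$$
Since $\spt h\subset B_{1/2}$, the integrand is supported on $w\in B_{1/2}(\xi/2)$, so $x+rw$ stays in $B_{r/2}(x+r\xi/2)$; at $r=\eps$ this ball is centered at the midpoint of $x,y$. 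A change of variable $w\mapsto w-\xi$, combined with $y-\eps\xi=x$, identifies $A_\eps$ with the analogous expression based at $y$ with direction $-\xi$, which is the symmetry that will couple $x$ and $y$. I also choose $h$ even (which is compatible with \eqref{eq:kernelh}), so that $\{\Upsilon^{\xi,j}\}_{\xi\in{\mathbb S}^{N-1}}$ is closed, up to sign, under the reflection $w\mapsto -w$.

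The core step is to differentiate $r\mapsto A_r$ and to recognize the derivative as a distributional pairing of $\partial_j u$ against the test kernel $\Upsilon^{\xi,j}_r(\cdot-x)$. Writing $A_r=\int u(z)H_r(z)\,dz$ with $H_r(z)=r^{-N}h(\xi/2-(z-x)/r)$ and differentiating under the integral in $r$, a direct computation based on \eqref{e:ups} yields the algebraic identity
$$
\frac{dH_r}{dr}(z) = -\,r^{-N-1}\sum_{j=1}^{N}(\partial_j\Upsilon^{\xi,j})\!\left(\frac{z-x}{r}\right),
$$
which by integration by parts in $w$ is dual to the smooth-case formula $\frac{dA_r}{dr}=\sum_j\int\partial_j u(x+rw)\,\Upsilon^{\xi,j}(w)\,dw$. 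Since $\Upsilon^{\xi,j}_r(\cdot-x)\in C^\infty_c(\R^N)$, this gives, for $u\in L^1_{\loc}$ with $\partial_j u=\sum_k S_{jk}g_{jk}$ distributionally,
$$
\frac{dA_r}{dr} = \sum_{j=1}^{N}\bigl\langle\partial_j u,\,\Upsilon^{\xi,j}_r(\cdot-x)\bigr\rangle.
$$
Using the evenness of $h$ to reflect the test kernel, Definition~\ref{d:rhomax} bounds $|\langle\partial_j u,\Upsilon^{\xi,j}_r(\cdot-x)\rangle|\le M_{\{\Upsilon^{\xi',j},\,\xi'\in{\mathbb S}^{N-1}\}}(\partial_j u)(x)$, uniformly in $r$ and $\xi$.

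At a Lebesgue point $x$ of $u$, $A_r\to u(x)$ as $r\to 0$ (by the standard estimate $|A_r-u(x)|\le\|h\|_\infty r^{-N}\int_{B_r(x)}|u-u(x)|$). Integrating $dA_r/dr$ over $(0,\eps]$ therefore yields $|A_\eps-u(x)|\le\eps\sum_j M_{\{\Upsilon^{\xi',j}\}}(\partial_j u)(x)$, and symmetrically $|A_\eps-u(y)|\le\eps\sum_j M_{\{\Upsilon^{\xi',j}\}}(\partial_j u)(y)$. Combining via triangle inequality, substituting $\partial_j u=\sum_k S_{jk}g_{jk}$, and invoking sublinearity of the maximal function delivers \eqref{e:incremest} with $U$ as in \eqref{e:exprU}. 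To prove $U\in M^1(\R^N)$, I verify that $\{\Upsilon^{\xi,j}\}\subset C^\infty_c(\R^N)$ is uniformly supported in $B_1$ (since $|\xi/2|\le 1/2$ and $\spt h\subset B_{1/2}$), has $\|\Upsilon^{\xi,j}\|_{L^1}\le\|h\|_{L^1}=1$, and is uniformly bounded in $H^q$ for any $q>N/2$, so Remark~\ref{Rk Hq} gives \eqref{e:normcb}; then Theorem~\ref{t:mainest}(ii) applied to each finite measure $g_{jk}$ gives $M_{\{\Upsilon^{\xi,j}\}}(S_{jk}g_{jk})\in M^1(\R^N)$, and \eqref{e:care} promotes the finite sum to $U\in M^1(\R^N)$. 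The exceptional set $\mathcal{N}$ is taken to be the union of the non-Lebesgue points of $u$ and the level set $\{U=+\infty\}$, both $\Leb^N$-null. The hardest point in the whole argument is the distributional reading of $dA_r/dr$: one must perform the explicit computation of $dH_r/dr$ and recognize it as the rescaled divergence $-r^{-N-1}\sum_j\partial_j\Upsilon^{\xi,j}$, an identity which is precisely what allows the classical FTC-along-a-line reasoning to be carried out at the distributional level and which singles out the particular ansatz \eqref{e:ups}.
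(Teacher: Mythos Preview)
Your argument is correct and rests on the same identity as the paper's proof: after the change of variables $r'=s|x-y|$, the paper's formula
\[
\int h_r\Bigl(z-\tfrac{x+y}{2}\Bigr)\bigl(u(x)-u(z)\bigr)\,dz
= |x-y|\sum_j\int_0^1\Bigl[\Upsilon^{\frac{x-y}{|x-y|},j}_{s|x-y|}*\partial_j u\Bigr](x)\,ds
\]
is exactly your $u(x)-A_\eps=\int_0^\eps\frac{dA_r}{dr}\,dr$. The organizational differences are two. First, the paper parametrizes the fundamental theorem of calculus along the segments $x\to z$ and then changes variables to recognize the convolution, while you differentiate the scale parameter $r$ and read off the convolution from the algebraic identity $\partial_r H_r=-r^{-N-1}\sum_j\partial_j\Upsilon^{\xi,j}((\cdot-x)/r)$. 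Second, the paper first carries out the computation for smooth $u$, $g_{jk}$ and then runs a mollification argument (its Step~2) to justify \eqref{eq:formdiffups} in general; you bypass this by interpreting $\frac{dA_r}{dr}=\sum_j\langle\partial_j u,\Upsilon^{\xi,j}_r(\cdot-x)\rangle$ directly as a distributional pairing and noting that $A_r$ is $C^1$ in $r>0$ with derivative bounded by $U(x)$, hence Lipschitz down to $r=0$ at Lebesgue points where $U(x)<\infty$. This is a genuine streamlining of the paper's proof.

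The price you pay is the additional requirement that $h$ be even, needed because your pairing $\langle\partial_j u,\Upsilon^{\xi,j}_r(\cdot-x)\rangle$ is a \emph{correlation} rather than a convolution, and the reflected kernel $\Upsilon^{\xi,j}(-\,\cdot)$ must land back in the family $\{\pm\Upsilon^{\xi',j}\}_{\xi'\in{\mathbb S}^{N-1}}$ to invoke Definition~\ref{d:rhomax}. The paper avoids this by orienting $\xi$ as $(x-y)/|x-y|$, which produces the convolution form directly. Since the statement only asks for \emph{some} $h$ satisfying \eqref{eq:kernelh}, your extra evenness assumption is harmless.
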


\begin{proof}%[Proof of Proposition~\ref{p:estdiff}]
The property that $U$ defined in \eqref{e:exprU} belongs to $M^1$ is
obtained directly by applying Theorem~\ref{t:mainest} (ii).
Indeed the kernel $\Upsilon^{\xi,j}$
defined in \eqref{e:ups} for $\xi\in{\mathbb S}^{N-1}$
belongs to $C^\infty_c (\R^N)$ and has support in the unit ball
because $\spt h \subset B_{1/2}$ and $|\xi|=1$. Moreover
it is uniformly bounded in $L^1$ and in $H^q$ for any $q\in\N$,
thus with Remark~\ref{Rk Hq} we get that \eqref{e:unifrho}
and \eqref{e:normcb} are satisfied for $\Upsilon^{\xi,j}$.

It remains to prove that \eqref{e:incremest} holds
for $U$ defined by \eqref{e:exprU}.
We define as usual $h_r = \frac{1}{r^N} h \left( \frac{\cdot}{r} \right) $ for all $r>0$,
and since $\int h_r=1$ we can write
\begin{equation}\label{e:diffrew}
u(x) - u(y) = \int_{\R^N} h_r \left( z -\frac{x+y}{2} \right) \big( u(x) - u(z) \big) \, dz
+ \int_{\R^N} h_r \left( z -\frac{x+y}{2} \right) \big( u(z) - u(y) \big) \, dz \,.
\end{equation}
We consider one of the integrals in \eqref{e:diffrew},
we assume that $x\not=y$ and we set $r = |x-y|$.

\medskip

{\sc Step 1. Computation in the smooth case.} In order to justify the following computations,
we assume for the moment that the functions $u$ and $g_{jk}$ are smooth.
Thus we can compute
\begin{equation}\label{e:comp}
\begin{split}
& \int_{\R^N} h_r \left( z -\frac{x+y}{2} \right) \big( u(x) - u(z) \big) \, dz \\
= & - \sum_{j=1}^N \int_{\R^N} \int_0^1 h_r \left( z -\frac{x+y}{2} \right) \partial_j u \big( x + s (z-x) \big) (z_j - x_j) \, ds dz \\
= & \sum_{j=1}^N \int_0^1 \int_{\R^N} h_r \left( x- \frac{x+y}{2} - \frac{w}{s} \right) \partial_j u (x-w) \frac{w_j}{s^{N+1}} \, dw ds \\
= & \sum_{j=1}^N \int_0^1 \frac{r}{s^N} \left[ h_r \left( \frac{x-y}{2} - \frac{w}{s} \right) \frac{w_j}{s r} \right] \stackrel{w}{*} \Big[ \partial_j u (w) \Big] (x) \, ds \,.
\end{split}
\end{equation}
Now, as usual we define for every $\eps>0$
$$ \Upsilon^{\xi , j}_\eps (w) = \frac{1}{\eps^N} \Upsilon^{\xi , j} \left(\frac{w}{\eps}\right) \,. $$
Then from \eqref{e:comp} and the expression for the derivative of $u$ in \eqref{e:derivative}, we deduce that
\begin{equation}\begin{array}{l}
	\dsp \int_{\R^N} h_r \left( z -\frac{x+y}{2} \right) \big( u(x) - u(z) \big) \, dz = r \sum_{j=1}^N \int_0^1 \left[ \Upsilon^{\frac{x-y}{|x-y|} , j}_{s r} * \partial_j u \right] (x) \, ds\\
	\dsp\hphantom{\int_{\R^N} h_r \left( z -\frac{x+y}{2} \right) \big( u(x) - u(z) \big) \, dz}
	=r \sum_{j=1}^N\sum_{k=1}^m \int_0^1 \left[ \Upsilon^{\frac{x-y}{|x-y|} , j}_{s r} * \Bigl(S_{jk}g_{jk}\Bigr) \right] (x) \, ds.
	\label{eq:formdiffups}
	\end{array}
\end{equation}
Therefore
$$\begin{array}{l}
	\dsp\hphantom{\leq\ }\left|\int_{\R^N} h_r \left( z -\frac{x+y}{2} \right) \big( u(x) - u(z) \big) \, dz\right|
	\dsp\leq|x-y|\sum_{j=1}^N\sum_{k=1}^m \int_0^1 \left| \Upsilon^{\frac{x-y}{|x-y|} , j}_{s r} * \Bigl(S_{jk}g_{jk}\Bigr) (x) \right| \, ds \,, \end{array}
$$
and we have the estimate
\begin{equation}\begin{array}{ll}
	\dsp \sum_{j=1}^N\sum_{k=1}^m \int_0^1 
	\left| \Upsilon^{\frac{x-y}{|x-y|} , j}_{s r} * \Bigl(S_{jk}g_{jk}\Bigr) (x) \right| \, ds &
	\dsp \leq\sum_{j=1}^N\sum_{k=1}^m \int_0^1 \sup_{\xi \in {\mathbb S}^{N-1}}
	\sup_{\eps > 0}\left| \Upsilon^{\xi , j}_{\eps} * \Bigl(S_{jk}g_{jk}\Bigr) (x) \right| \, ds\\
	&\dsp = \sum_{j=1}^N\sum_{k=1}^m M_{\{\Upsilon^{\xi,j} , \; \xi \in {\mathbb S}^{N-1}\}}
	\Bigl(S_{jk}g_{jk}\Bigr) (x)
	 = U(x) \, .
	\label{eq:estformdiffups}
\end{array}\end{equation}
Since the second term in \eqref{e:diffrew} gives a similar
contribution with $x$ and $y$ exchanged, this concludes \eqref{e:incremest}
when $u$ and $g_{jk}$ are smooth. 

\medskip

{\sc Step 2. Approximation argument.}
In the general case $u\in L^1_{\loc}$ and $g_{jk}$ measures,
we still have that $U$ defined in \eqref{e:exprU} belongs to $M^1$.
We take a smoothing sequence $\zeta_{1/n}(w)=n^N\zeta(nw)$,
$\zeta\in C^\infty_c$, $\int\zeta=1$, $\spt\zeta\subset B_{\alpha}$
where $\alpha>0$ is such that $\spt h\subset B_{1/2-\alpha}$.
Convolving \eqref{e:derivative} with $\zeta_{1/n}$
yields that $u^n\equiv\zeta_{1/n} * u$ verifies the same
assumption as $u$, with associated functions $g_{jk}^n=\zeta_{1/n} *g_{jk}$.
Since these functions are smooth we can apply the result proved in Step 1.
In particular, \eqref{eq:formdiffups} can be written
\begin{equation}\begin{array}{l}
	\dsp \int_{\R^N} h_r \left( z -\frac{x+y}{2} \right) \big( u_n(x) - u_n(z) \big) \, dz
	=r \sum_{j=1}^N\sum_{k=1}^m \int_0^1 \left[ \Bigl(\zeta_{1/n}*\Upsilon^{\frac{x-y}{|x-y|} , j}_{s r} \Bigr)* \Bigl(S_{jk}g_{jk}\Bigr) \right] (x) \, ds.
	\label{eq:formdiffupsn}
	\end{array}
\end{equation}
Then, for $\eps>0$, $\xi\in{\mathbb S}^{N-1}$, $n\in\N$, we write the kernel in the following way,
$$ \begin{array}{l}
	\dsp \zeta_{1/n}*\Upsilon^{\xi , j}_{\eps}=\Bigl(\zeta*\Upsilon^{\xi , j}_{n\eps}\Bigr)_{1/n}\quad\text{if }n\eps\leq 1,\\
	\dsp \zeta_{1/n}*\Upsilon^{\xi , j}_{\eps}=\Bigl(\zeta_{1/(n\eps)}*\Upsilon^{\xi , j}\Bigr)_\eps\quad\text{if }n\eps>1 \,. \end{array}
$$
Setting
$$	\rho^{n,\eps,\xi,j}=\left\{\begin{array}{l}
	\dsp\zeta*\Upsilon^{\xi , j}_{n\eps}\quad\text{if }n\eps\leq 1,\\
	\dsp\zeta_{1/(n\eps)}*\Upsilon^{\xi , j}\quad\text{if }n\eps>1,
	\end{array}\right.
$$
we have $\spt \rho^{n,\eps,\xi,j}\subset B_1$,
thus by applying Theorem~\ref{t:mainest} (ii) with $\nu=(n,\eps,\xi)$
we get that
$$	\overline U\equiv\sum_{j=1}^N\sum_{k=1}^m\sup_{n,\eps,\xi}\Bigl|\Bigl(\zeta_{1/n}*\Upsilon^{\xi , j}_{\eps}\Bigr)*\Bigl(S_{jk}g_{jk}\Bigr)\Bigr|\in M^1 \,.
$$
In particular, there exists an $\Leb^N$-negligible set ${\mathcal N}_1 \subset \R^N$
such that $\overline U(x)<\infty$ for all $x\not\in{\mathcal N}_1$.
We have also $u^n\rightarrow u$ in $L^1_{\loc}$, thus after extraction
of a subsequence, $u^n\rightarrow u$ a.e, and there exists
an $\Leb^N$-negligible set ${\mathcal N}_2 \subset \R^N$ such that
$u^n(x)\rightarrow u(x)$ for all $x\not\in{\mathcal N}_2$.
Setting ${\mathcal N}={\mathcal N}_1\cup{\mathcal N}_2$,
we can pass to the limit in \eqref{eq:formdiffupsn} by dominated convergence
in $s$ for all $x\not\in{\mathcal N}$. This yields \eqref{eq:formdiffups}.
Finally, the estimate \eqref{eq:estformdiffups} is still valid,
proving the result.
\end{proof}

\begin{remark}
The case $u \in BV$ previously stated in Lemma~\ref{Lemma quotients BV} is obtained 
from Proposition~\ref{p:estdiff} by taking $m=1$ and $S_{jk} = \delta_0$.
Noticing that according to \eqref{e:forstrongest} we have
$M_{\{\Upsilon^{\xi,j}\}}(g_j)\leq C_N Mg_j$, we recover estimate \eqref{eq:quotBV}.
\end{remark}

\section{Regular Lagrangian flow: definition and main estimate}\label{s:estimate}

We start by recalling the notion of convergence in measure for measurable functions and by introducing some related notation.

\begin{definition}[Convergence in measure] We say that a sequence of measurable functions
$u_n : \R^N \to \R$ {\em converges locally in measure in $\R^N$} towards a measurable function
$u : \R^N \to \R$ if for every $\gamma > 0$ and every $r>0$ there holds
$$ \Leb^N  \big( \big\{ x \in B_r \; : \; |u_n(x) - u(x)| > \gamma \big\} \big) \to 0 
\qquad \text{as $n \to +\infty$.}
$$
We simply say that $u_n$ converges towards $u$ in measure in $\R^N$ when the same is true
with $B_r$ replaced by the whole space $\R^N$.

We denote by $L^0(\R^N)$ the space of real-valued measurable functions on $\R^N$, defined a.e.~with respect to the Lebesgue measure $\Leb^N$, endowed with the convergence in measure. We denote by $L^0_\loc(\R^N)$ the same space, when we mean that it is endowed with the local convergence in measure.
We shall also denote by ${\mathcal B}(E,F)$ the space of bounded functions between the sets $E$ and $F$.
\end{definition}

We shall always consider in the following vector fields $b : (0,T) \times \R^N \to \R^N$ satisfying
at least the following growth condition:
\begin{itemize}
\item[(R1)] The vector field $b$ can be decomposed as
\begin{equation}\label{e:grass2}
\frac{b(s,x)}{1+|x|} = \tilde b_1(s,x) + \tilde b_2 (s,x) \,,
\end{equation}
with
\begin{equation}
	\tilde b_1 \in L^1 \big( (0,T);L^1(\R^N) \big),\qquad
	\tilde b_2 \in L^1\big( (0,T) ; L^\infty(\R^N) \big).
	\label{e:grass2space}
\end{equation}
\end{itemize}

The following is the by now usual definition of flow for the ordinary differential equation in the case of non smooth vector fields.
In all the following, we denote by $\log L(\R^N)$ the space of measurable functions $u : \R^N \to \R$ such that
$\int_{\R^N} \log \big( 1 + |u(x)| \big) \, dx$ is finite, and the local space $\log L_\loc (\R^N)$ is defined accordingly.

\begin{definition}[Regular Lagrangian flow]\label{d:rlf}
Let $b : (0,T) \times \R^N \to \R^N$ be a vector field satisfying assumption (R1) and fix $t \in [0,T)$. We say that a map 
\begin{equation}\label{e:rlfclass}
X \in C\big( [t,T]_s ; L^0_\loc(\R^N_x) \big) \cap
{\mathcal B}\big( [t,T]_s ; \log L_\loc(\R^N_x) \big)
\end{equation}
is a {\em regular Lagrangian flow (in the renormalized sense)} relative to $b$ starting at time $t$ if the following properties hold:
\begin{itemize}
\item[(i)] The equation 
\begin{equation}\label{e:rlfeq}
\partial_s \Bigl(\beta\big(X(s,x)\big)\Bigr) = \beta' \big( X(s,x) \big)b \big(s,X(s,x)\big) 
\end{equation}
holds in $\D' \big( (t,T) \times \R^N \big)$, for every function $\beta \in C^1(\R^N;\R)$ satisfying 
$$ |\beta(z)| \leq C \big( 1 + \log (1+|z|) \big) \quad \text{and} \quad
|\beta'(z)| \leq \frac{C}{1+|z|} \qquad
\text{for all $z \in \R^N$,}
$$
for some constant $C$;
\item[(ii)] $X(t,x) = x$ for $\Leb^N$-a.e.~$x \in \R^N$;
\item[(iii)] There exists a constant $L \geq 0$ such that for every $s \in [t,T]$ there holds 
\begin{equation}\label{e:defcompr}
X(s,\cdot)_\# \Leb^N \leq L \Leb^N \,,
\end{equation}
i.e.
\begin{equation}
	\Leb^N \left( \left\{ x \in \R^N \; : \; X(s,x) \in B \right\} \right) \leq L \Leb^N (B) \qquad \text{ for every Borel set $B \subset \R^N$.}
	\label{eq:absmeas}
\end{equation}
\end{itemize}
The constant $L$ in \eqref{e:defcompr} is called the {\em compression constant} of the flow $X$.
The condition \eqref{e:defcompr} can also be formulated as
\begin{equation}
	\int_{\R^N}\varphi(X(s,x))\,dx\leq L\int_{\R^N}\varphi(x)\,dx
	\qquad\mbox{for all measurable }\varphi:\R^N\rightarrow[0,\infty).
	\label{e:comprfunct}
\end{equation}
Note that (iii) and \eqref{e:grass2}-\eqref{e:grass2space} enable to get a right-hand side of \eqref{e:rlfeq} in $L^1((t,T),L_{\loc}^1(\R^N))$.
\end{definition}

\begin{remark} (i) In the case of smooth flows, the bounded compression condition \eqref{e:defcompr} corresponds to the lower bound $JX(s,x) \geq 1/L >0$ on the Jacobian of the flow. (ii)
In the sequel we shall also need to make explicit the dependence of the flow on the initial time
$t \in [0,T]$ chosen in Definition~\ref{d:rlf}, thus we shall use the notation $X(s,t,x)$ to indicate
the value at time $s$ of the regular Lagrangian flow relative to $b$ and starting at time $t$,
with $0\leq t\leq s\leq T$.
\end{remark}

Our definition of regular Lagrangian flow slightly differs from the usual one (see for instance \cite{A,notes,notes2}),
since in general we do not assume global boundedness of the vector field $b$,
and thus we do not have that $X(s,\cdot)$ is locally integrable in $\R^N$.
For this reason we state the equation in Definition~\ref{d:rlf}(i)
in the renormalized sense, since the usual distributional equation is a priori meaningless.
A bound on $X(s,x)$ in $\log L_\loc(\R^N)$
(as stated in \eqref{e:rlfclass}) indeed follows from the integration in $s$ of \eqref{e:rlfeq},
knowing that the right-hand side is in $L^1((t,T),L_{\loc}^1(\R^N))$, see \eqref{eq:logpoint} below.
This estimate can be slightly strenghtened by the a priori estimate on the growth of the flow stated in the following lemma.
We preliminarily introduce the notation for the sublevels.

\begin{definition}[Sublevels]\label{d:sublevel}
Let $X : [t,T] \times \R^N \to \R^N$ be a measurable map. For every $\lambda > 0$ we define the sublevel
$$	G_\lambda = \Big\{ x \in \R^N \; : \; |X(s,x)| \leq \lambda \text{ for almost all $s\in [t,T]$} \Big\}\,.
$$
\end{definition}

\begin{lemma}[Estimate of the superlevels]\label{l:superlevels} 
Let $b : (0,T) \times \R^N \to \R^N$ be a vector field satisfying assumption (R1)
and let $X : [t,T] \times \R^N \to \R^N$ be a regular Lagrangian flow relative to $b$ starting at time $t$, with compression constant $L$.
Then for $B_r$ the ball of radius $r$ centered at the origin,
\begin{equation}
	\mbox{for all }s\in[t,T],\ \int_{B_r}\log\left(\frac{1+|X(s,x)|}{1+r}\right)\,dx
	\leq L \| \tilde b_1 \|_{L^1((0,T);L^1(\R^N))} + \Leb^N (B_r)\| \tilde b_2 \|_{L^1((0,T);L^\infty(\R^N))}
	\label{eq:logpoint}
\end{equation}
and
\begin{equation}\label{e:loggrowth}
\int_{B_r} \mathop{\rm ess\, sup}_{t \leq s \leq T}\, \log \left( \frac{ 1 + |X(s,x)|}{1+r} \right) \, dx \leq 
L \| \tilde b_1 \|_{L^1((0,T);L^1(\R^N))} + \Leb^N (B_r)\| \tilde b_2 \|_{L^1((0,T);L^\infty(\R^N))} \,,
\end{equation}
where $\tilde b_1$ and $\tilde b_2$ are as in \eqref{e:grass2}-\eqref{e:grass2space}.
This in particular implies that for every regular Lagrangian flow $X$ relative to $b$ starting at time $t$
 with compression constant $L$, we have for all $r,\lambda>0$
\begin{equation}
	\Leb^N \big( B_r \setminus G_\lambda \big) \leq g(r,\lambda) \,,
	\label{eq:estsuperlevels}
\end{equation}
where the function $g$ only depends on $\| \tilde b_1 \|_{L^1((0,T);L^1(\R^N))}$, $\| \tilde b_2 \|_{L^1((0,T);L^\infty(\R^N))}$ and $L$, and satisfies $g(r,\lambda) \downarrow 0$ for $r$ fixed and $\lambda \uparrow +\infty$.
\end{lemma}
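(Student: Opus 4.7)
The plan is to feed the renormalized equation of Definition~\ref{d:rlf}(i) with the logarithmic test function $\beta(z)=\log(1+|z|)$ (or, to stay strictly $C^1$, with $\beta(z)=\log\bigl(1+\sqrt{\delta^2+|z|^2}\bigr)$ and eventually send $\delta\to 0$), which satisfies the growth bounds required there since $|\beta(z)|\leq 1+\log(1+|z|)$ and $|\beta'(z)|\leq 1/(1+|z|)$. Using decomposition (R1) one obtains the pointwise (in $x$, a.e.) bound
\begin{equation*}
\Bigl|\partial_s\beta(X(s,x))\Bigr|=\frac{|b(s,X(s,x))|}{1+|X(s,x)|}\leq |\tilde b_1|(s,X(s,x))+|\tilde b_2|(s,X(s,x)),
\end{equation*}
and, knowing from (R1) that the right-hand side belongs to $L^1((t,T);L^1_{\loc})$ after composition with $X$ (this is what justifies the use of the renormalized formulation in the first place), integration in $s$ from $t$ to any $s\in[t,T]$ combined with $X(t,x)=x$ yields
\begin{equation*}
\log(1+|X(s,x)|)\leq \log(1+|x|)+\int_t^T\Bigl[|\tilde b_1|(\tau,X(\tau,x))+|\tilde b_2|(\tau,X(\tau,x))\Bigr]\,d\tau
\end{equation*}
for a.e.\ $x\in\R^N$ and \emph{uniformly} in $s\in[t,T]$, since the majorant no longer depends on $s$.

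Next I would integrate this inequality over $x\in B_r$ and subtract the trivial bound $\log(1+|x|)\leq\log(1+r)$ valid in $B_r$, so that the left-hand side becomes $\int_{B_r}\log\bigl((1+|X(s,x)|)/(1+r)\bigr)\,dx$. For the right-hand side I would apply Fubini and, for each fixed $\tau$, use the bounded compression property~\eqref{e:comprfunct} on $|\tilde b_1(\tau,\cdot)|$ to get
\begin{equation*}
\int_{B_r}|\tilde b_1|(\tau,X(\tau,x))\,dx\leq L\,\|\tilde b_1(\tau,\cdot)\|_{L^1(\R^N)},
\end{equation*}
together with the trivial estimate $\int_{B_r}|\tilde b_2|(\tau,X(\tau,x))\,dx\leq\Leb^N(B_r)\|\tilde b_2(\tau,\cdot)\|_{L^\infty}$. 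Integrating in $\tau\in[t,T]\subset[0,T]$ and plugging the constants from \eqref{e:grass2space} produces exactly the right-hand side of \eqref{eq:logpoint}. Since the pointwise inequality holds with the $s$-free majorant, taking the essential supremum in $s$ inside the integral over $B_r$ before applying Fubini gives the stronger bound \eqref{e:loggrowth} with the same constant.

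Finally, for \eqref{eq:estsuperlevels}, note that by Definition~\ref{d:sublevel} we have $x\in B_r\setminus G_\lambda$ iff $x\in B_r$ and $\mathop{\rm ess\,sup}_{s\in[t,T]}|X(s,x)|>\lambda$. When $\lambda>r$, Chebyshev's inequality applied to \eqref{e:loggrowth} with the nonnegative integrand gives
\begin{equation*}
\Leb^N(B_r\setminus G_\lambda)\leq \frac{L\|\tilde b_1\|_{L^1((0,T);L^1)}+\Leb^N(B_r)\|\tilde b_2\|_{L^1((0,T);L^\infty)}}{\log\bigl((1+\lambda)/(1+r)\bigr)},
\end{equation*}
while for $\lambda\leq r$ one uses the trivial bound $\Leb^N(B_r)$. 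Taking $g(r,\lambda)$ to be the minimum of these two expressions gives a function depending only on $r$, $\lambda$, $L$ and the two norms, which tends to $0$ as $\lambda\uparrow\infty$ with $r$ fixed. The only real technical point is the justification of the a.e.\ pointwise integrated inequality starting from the distributional renormalized equation; this is standard (the distributional identity combined with an $L^1$ right-hand side gives, after Fubini and for a.e.\ $x$, an absolutely continuous function of $s$ satisfying the ODE almost everywhere), and the passage $\delta\to 0$ in the smoothed $\beta$ poses no difficulty by monotone convergence.
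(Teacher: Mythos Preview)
Your proposal is correct and follows essentially the same route as the paper: regularize $\beta(z)=\log(1+|z|)$ to a $C^1$ function, use the renormalized equation and the $L^1((t,T);L^1_{\loc})$ bound on the right-hand side to pass to an absolutely continuous representative in $s$ for a.e.\ $x$, integrate, let the regularization parameter go to $0$, then integrate over $B_r$ and apply the compression bound to $\tilde b_1$ and the trivial bound to $\tilde b_2$.

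One small inaccuracy: in your Chebyshev step you call the integrand in \eqref{e:loggrowth} ``nonnegative'', but $\log\bigl((1+|X|)/(1+r)\bigr)$ can be negative on $B_r$. The paper handles this by bounding the integrand below by $-\log(1+r)$ on $B_r\cap G_\lambda$, which produces the extra term $\Leb^N(B_r)\log(1+r)$ in the numerator of $g(r,\lambda)$. Equivalently, you can simply take the positive part of the pointwise inequality before integrating (the majorant $\int_t^T[\cdots]\,d\tau$ is nonnegative), which then makes your Chebyshev bound with denominator $\log\bigl((1+\lambda)/(1+r)\bigr)$ legitimate for $\lambda>r$.
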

\begin{proof}The proof can be found for instance in \cite[Proposition 3.2]{estimates}, but for
completeness we write it in full details.
Let us take $\beta_\varepsilon(z)=\log(1+\sqrt{|z|^2+\varepsilon^2})$, for some $\varepsilon>0$.
Then \eqref{e:rlfeq}, (R1) and \eqref{e:comprfunct} yields that
$\partial_s\bigl(\beta_\varepsilon(X(s,x))\bigr)\in L^1((t,T),L^1_\loc(\R^N))$.
Thus, for almost all $x\in\R^N$, $\partial_s\bigl(\beta_\varepsilon(X(\cdot,x))\bigr)\in L^1(t,T)$.
This implies that for such $x$, $\beta_\varepsilon(X(\cdot,x))$ coincides almost everywhere with an absolutely continuous function
$\Xi_\varepsilon(\cdot,x)$ in $[t,T]$. We have for all $s\in[t,T|$
$$ \Xi_\varepsilon(s,x)=\Xi_\varepsilon(t,x)+\int_t^s{\beta_\varepsilon}'\bigl(X(\tau,x)\bigr)b\bigl(\tau,X(\tau,x)\bigr)d\tau \,,
$$
thus
\begin{equation}
	\mbox{for a.e. }x\in\R^N,\quad\mbox{for a.e. }s\in (t,T),\qquad
	\beta_\varepsilon(X(s,x))=\Xi_\varepsilon(t,x)+\int_t^s{\beta_\varepsilon}'\bigl(X(\tau,x)\bigr)b\bigl(\tau,X(\tau,x)\bigr)d\tau.
	\label{eq:intcharae}
\end{equation}
But since the integral in the right-hand side belongs to $C([t,T],L^1_\loc)$, and
$\beta_\varepsilon(X)\in C\big( [t,T] ; L^0_\loc \big)\cap{\mathcal B}([t,T],L^1_\loc)$,
we have that $\Xi_\varepsilon(t,\cdot)\in L^1_\loc$, and \eqref{eq:intcharae} is valid for all $s$, i.e.
\begin{equation}
	\mbox{for all }s\in [t,T],\quad\mbox{for a.e. }x\in\R^N,\qquad
	\beta_\varepsilon(X(s,x))=\beta_\varepsilon(X(t,x))+\int_t^s{\beta_\varepsilon}'\bigl(X(\tau,x)\bigr)b\bigl(\tau,X(\tau,x)\bigr)d\tau.
	\label{eq:intcharall}
\end{equation}
We have
$$ {\beta_\varepsilon}'(z)=\frac{1}{1+\sqrt{|z|^2+\varepsilon^2}}\frac{z}{\sqrt{|z|^2+\varepsilon^2}}\,,\qquad
	\left|{\beta_\varepsilon}'(z)\right|\leq\frac{1}{1+|z|} \,,
$$
thus for all $s\in[t,T]$ and $a.e.\ x\in\R^N$,
$$ \beta_\varepsilon(X(s,x))\leq\beta_\varepsilon(X(t,x))
	+\int_t^s\frac{\left|b\bigl(\tau,X(\tau,x)\bigr)\right|}{1+|X(\tau,x)|}\,d\tau \,.
$$
Letting $\varepsilon\rightarrow 0$ this yields that for all $s\in[t,T]$ and $a.e.\ x\in\R^N$,
\begin{equation}
	\log(1+|X(s,x)|)\leq\log(1+|x|)
	+\int_t^s\frac{\left|b\bigl(\tau,X(\tau,x)\bigr)\right|}{1+|X(\tau,x)|}\,d\tau.
	\label{eq:intcharall3}
\end{equation}
In particular, integrating this over $x\in B_r$ yields \eqref{eq:logpoint}.
Similarly, weakening \eqref{eq:intcharall3} to $a.e.\ x\in\R^N$, $a.e.\ s\in(t,T)$, yields \eqref{e:loggrowth}.
Then, we have
$$ \int_{B_r} \mathop{\rm ess\, sup}_{t \leq s \leq T}\, \log \left( \frac{ 1 + |X(s,x)|}{1+r} \right) \, dx
	\geq \Leb^N(B_r\backslash G_\lambda)\log\left(  1 + \lambda \right)-\Leb^N(B_r)\log(1+r) \,,
$$
proving \eqref{eq:estsuperlevels} with
$$ g(r,\lambda)=\frac{L \| \tilde b_1 \|_{L^1((0,T);L^1(\R^N))} + \Leb^N (B_r)\| \tilde b_2 \|_{L^1((0,T);L^\infty(\R^N))}
	+\Leb^N(B_r)\log(1+r)}{\log(1+\lambda)} \,. $$
\end{proof}
\begin{remark}\label{r:Xsmallinfinity}
The inequality \eqref{eq:intcharall3} can be reversed, by estimating \eqref{eq:intcharall} from
the other side. Thus we have for all $s\in[t,T]$ and $a.e.\ x\in\R^N$,
$$
	\log(1+|x|)\leq\log(1+|X(s,x)|)
	+\int_t^s\frac{\left|b\bigl(\tau,X(\tau,x)\bigr)\right|}{1+|X(\tau,x)|}\,d\tau.
$$
In particular, given $\lambda>0$ and $r>0$, the points $x$ where $|X(s,x)|\leq\lambda$
and $|x|>r$ satisfy
$$ \int_t^s\left|\tilde b_1\bigl(\tau,X(\tau,x)\bigr)\right|\,d\tau
	\geq \log(1+r)-\log(1+\lambda)-\| \tilde b_2 \|_{L^1((0,T);L^\infty(\R^N))} \,.
$$
In particular, the following counterpart of \eqref{eq:estsuperlevels} holds: for fixed $\lambda$, we have
$$ \Leb^N\left(\left\{x\in\R^N \, : \, |X(s,x)|\leq\lambda, |x|>r\right\}\right)
	\longrightarrow 0 \qquad\mbox{as }r\rightarrow\infty \,,
$$
uniformly for $s\in[t,T]$.
\end{remark}

Let us now recall some classical properties related to the notion of equi-integrability.

\begin{definition}[Equi-integrability] Let $\Omega$ be an open subset of $\R^N$. We say that a bounded family $\{ \varphi_i \}_{i \in I} \subset L^1(\Omega)$ is equi-integrable if the following two conditions hold:
\begin{itemize}
\item[(i)] For any $\eps > 0$ there exists a Borel set $A \subset \Omega$ with finite measure such that $\int_{\Omega \setminus A} |\varphi_i| \, dx \leq \eps$ for any $i \in I$;
\item[(ii)] For any $\eps > 0$ there exists $\delta > 0$ such that, for every Borel set $E \subset \Omega$ with with $\Leb^N(E) \leq \delta$, there holds $\int_E | \varphi_i | \, dx \leq \eps$ for any $i \in I$.
\end{itemize}
\end{definition}
We recall in passing that the Dunford-Pettis theorem ensures that a bounded family in $L^1(\Omega)$
is relatively compact for the weak $L^1$ topology if and only if it is equi-integrable. Also notice that every finite family is (trivially) equi-integrable.
An interesting property is that a sequence $u_n\in L^1(\R^N)$ converges to $u$ in $L^1(\R^N)$
if and only if it is equi-integrable and $u_n$ converges to $u$ locally in measure.
The following lemma can be proved with elementary tools.

\begin{lemma}\label{l:equiint}
Consider a family $\{ \varphi_i \}_{i \in I} \subset L^1(\Omega)$ which is bounded in $L^1(\Omega)$.
Then this family is equi-integrable if and only if for every $\eps > 0$, there exists a constant $C_\eps$
and a Borel set $A_\eps\subset\Omega$ with finite measure such that
such that for every $i \in I$ we can write
$$
\varphi_i = \varphi_i^1 + \varphi_i^2 \,,
$$
with
$$
\| \varphi_i^1 \|_{L^1(\Omega)} \leq \eps \qquad \text{ and } \qquad
\spt(\varphi_i^2)\subset A_\eps,\quad
\| \varphi_i^2 \|_{L^2(\Omega)} \leq C_\eps \qquad \text{ for all $i \in I$.}
$$
\end{lemma}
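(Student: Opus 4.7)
The plan is to prove the two implications separately; both are short, with the only slightly technical point in the ``only if'' direction being the use of a truncation argument to obtain the $L^2$ bound.

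For the easy direction (the decomposition implies equi-integrability), I would verify the two conditions (i) and (ii) of the definition. Given $\eta > 0$, fix the decomposition $\varphi_i = \varphi_i^1 + \varphi_i^2$ corresponding to $\eps = \eta/2$, and take the set $A$ in condition (i) to be $A_\eps$: since $\varphi_i$ coincides with $\varphi_i^1$ on $\Omega \setminus A_\eps$, we have $\int_{\Omega \setminus A_\eps} |\varphi_i| \leq \|\varphi_i^1\|_{L^1} \leq \eta/2$. For condition (ii), for any Borel set $E \subset \Omega$ we bound
\[
\int_E |\varphi_i|\,dx \leq \|\varphi_i^1\|_{L^1(\Omega)} + \|\varphi_i^2\|_{L^2(\Omega)} \Leb^N(E)^{1/2} \leq \frac{\eta}{2} + C_\eps \Leb^N(E)^{1/2},
\]
so choosing $\delta = (\eta/(2C_\eps))^2$ makes the right-hand side $\leq \eta$ whenever $\Leb^N(E) \leq \delta$.

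For the nontrivial direction, assume equi-integrability and fix $\eps > 0$. First, by condition (i) we pick a Borel set $A_\eps$ of finite measure such that $\int_{\Omega \setminus A_\eps} |\varphi_i|\,dx \leq \eps/2$ for every $i \in I$. The plan is then to further truncate on $A_\eps$: choose a threshold $M = M_\eps > 0$ large enough that the set $\{|\varphi_i| > M\}$ has Lebesgue measure uniformly smaller than the $\delta$ provided by condition (ii) for the choice $\eta = \eps/2$, which is possible thanks to the uniform $L^1$ bound through Chebyshev: $\Leb^N(\{|\varphi_i|>M\}) \leq \sup_j \|\varphi_j\|_{L^1}/M$. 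Then condition (ii) yields $\int_{\{|\varphi_i|>M\}} |\varphi_i|\,dx \leq \eps/2$ uniformly in $i$. I would then set
\[
\varphi_i^2 = \varphi_i\, \ind_{A_\eps}\, \ind_{\{|\varphi_i| \leq M\}}, \qquad \varphi_i^1 = \varphi_i - \varphi_i^2.
\]
By construction $\spt(\varphi_i^2) \subset A_\eps$ and $|\varphi_i^2| \leq M$, so $\|\varphi_i^2\|_{L^2(\Omega)} \leq M\,\Leb^N(A_\eps)^{1/2} =: C_\eps$; splitting $\varphi_i^1$ as the sum of its contributions on $\Omega \setminus A_\eps$ and on $A_\eps \cap \{|\varphi_i| > M\}$ and using the two bounds above yields $\|\varphi_i^1\|_{L^1(\Omega)} \leq \eps$, which finishes the proof.

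The main (small) obstacle is ensuring that the choice of the threshold $M$ in the second direction can be made uniformly in $i$: this is the place where both conditions (i)--(ii) of equi-integrability must be used together, the first only to obtain the set $A_\eps$ and the second applied to the sets $\{|\varphi_i|>M\} \subset \Omega$, whose measures go to zero uniformly by the Chebyshev argument and the $L^1$ boundedness of the family. Everything else is bookkeeping with the triangle inequality and Cauchy--Schwarz.
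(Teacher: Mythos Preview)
Your proof is correct and is exactly the standard elementary argument; the paper itself omits the proof entirely, stating only that the lemma ``can be proved with elementary tools.'' Your truncation-and-restriction construction in the forward direction and the Cauchy--Schwarz verification in the converse are precisely what one would expect here, and there are no gaps.
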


In order to obtain results of well-posedness (i.e., existence, uniqueness and stability)
for the regular Lagrangian flow, the mere growth conditions in (R1) are not sufficient.
Some assumptions on the space derivatives of the vector field are needed
(see for instance the discussion in \cite{notes, notes2, thesis}).
We are interested in the case when the space derivatives of the vector field can be expressed
as singular integrals of $L^1$ functions, with singular kernels of fundamental type
as in Definition~\ref{d:fundker}. We thus assume that in addition to (R1) $b$ satisfies the following assumption.

\begin{itemize}
\item[(R2)] For every $i$, $j=1,\ldots,N$ we have
\begin{equation}\label{e:exprder}
\partial_j b^i = \sum_{k=1}^m S_{jk}^i g_{jk}^i \qquad \text{in $\D'\big((0,T) \times \R^N\big)$,}
\end{equation}
where $S_{jk}^i$ are singular integral operators of fundamental type in $\R^N$
(acting as operators in $\R^N$ independently of time) and the functions  $g_{jk}^i \in L^1 \big( (0,T) \times \R^N \big)$ for every $i$, $j=1,\ldots,N$ and every $k=1,\ldots,m$. For notational simplicity, we shall also use the vectorial notation
\begin{equation}
	\partial_j b = \sum_{k=1}^m S_{jk} g_{jk} \qquad \text{in $\D' \big( (0,T) \times \R^N \big)$,}
	\label{e:exprdervect}
\end{equation}
in which $S_{jk}$ is a vector consisting of $N$ singular integral operators,
and for every $j=1,\ldots,N$ and every $k=1,\ldots,m$ we have $g_{jk} \in L^1 \big( (0,T) \times \R^N ; \R^N \big)$.
Note that the equations \eqref{e:exprder} or \eqref{e:exprdervect} can also be formulated
as equations in $\D'(\R^N)$ for almost every $t\in(0,T)$.
\end{itemize}

\noindent We also make the following local integrability assumption,
\begin{itemize}
\item[(R3)] The vector field $b$ satisfies
\begin{equation}\label{e:grass1}
b \in L^p_\loc( [0,T]\times \R^N)\quad\mbox{for some }p>1.
\end{equation}
\end{itemize}

We are now in position of proving our main quantitative estimate on the regular Lagrangian flows.
The idea is to consider an integral functional that controls the distance between two flows,
and to derive estimates in the same spirit of \cite{estimates}.
A key tool is given by Theorem~\ref{t:mainest}, that allows to estimate the composition
of singular integral operators with smooth maximal functions, that appears
in \eqref{e:exprU} when estimating the differential quotients as \eqref{e:incremest} in Proposition~\ref{p:estdiff}.

\begin{propos}[Fundamental estimate for flows]\label{l:main}
Let $b$ and $\bar b$ be two vector fields satisfying assumption (R1),
and assume that $b$ also satisfies assumptions (R2), (R3).
Fix $t \in [0,T)$ and let $X$ and $\bar X$ be regular Lagrangian flows starting at time $t$ associated to $b$ and $\bar b$ respectively, with compression constants $L$ and $\bar L$. Then the following holds.
For every $\gamma > 0$ and $r > 0$ and for every $\eta > 0$ there exist $\lambda > 0$ and $C_{\gamma,r,\eta}>0$ such that
\begin{equation}\label{e:mainlemma}
\Leb^N \Big( B_r \cap \big\{ |X(s,\cdot) - \bar X (s,\cdot) | > \gamma \big\} \Big) \leq
C_{\gamma,r,\eta} \| b - \bar b \|_{L^1((0,T) \times B_\lambda)} + \eta
\quad\mbox{ for all }s \in [t,T].
\end{equation}
The constants $\lambda > 0$ and $C_{\gamma,r,\eta}>0$, beside depending on $\gamma$, $r$ and $\eta$, also depend on
\begin{itemize}
\item The equi-integrability in $L^1\big( (0,T) \times \R^N \big)$ of the functions $g_{jk}$ associated to $b$ as in \eqref{e:exprdervect};
\item The norms of the singular integral operators $S_{jk}$ associated to $b$ as in \eqref{e:exprdervect}
(i.e., the constants $C_0 + C_1+\|\widehat K\|_\infty$ from Definition~\ref{d:fundker});
\item The norm $\|b\|_{L^p((0,T) \times B_\lambda)}$ corresponding to \eqref{e:grass1};
\item The quantities $\| \tilde b_1 \|_{L^1(L^1)} + \| \tilde b_2 \|_{L^1(L^\infty)}$ and 
$\| \tilde{\bar b}_1 \|_{L^1(L^1)} + \| \tilde{\bar b}_2 \|_{L^1(L^\infty)}$, for decompositions of $b$ and $\bar b$ as in \eqref{e:grass2}-\eqref{e:grass2space};
\item The compression constants $L$ and $\bar L$.
\end{itemize}
\end{propos}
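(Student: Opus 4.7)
My plan is to adapt the functional argument of \cite{estimates} by injecting the new cancellation estimate from Theorem~\ref{t:mainest} together with the equi-integrable decomposition of Lemma~\ref{l:equiint}. First, given $r$, $\gamma$, $\eta$, I will invoke Lemma~\ref{l:superlevels} to choose $\lambda$ so large that
\[
\Leb^N\bigl(B_r\setminus(G_\lambda\cap\bar G_\lambda)\bigr)\leq\eta/2,
\]
so that I may restrict attention to $\Omega = B_r\cap G_\lambda\cap\bar G_\lambda$, on which both flows remain in $B_\lambda$. On $\Omega$ I will introduce
\[
\Phi_\delta(s)=\int_\Omega\log\!\left(1+\frac{|X(s,x)-\bar X(s,x)|}{\delta}\right)dx,
\]
which vanishes at $s=t$. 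Using the renormalized equation for each flow (together with a standard regularization to justify the chain rule for $\log(1+|\cdot|/\delta)$) I will derive
\[
\Phi_\delta'(s)\leq\int_\Omega\frac{|b(s,X)-b(s,\bar X)|}{\delta+|X-\bar X|}\,dx+\frac{1}{\delta}\int_{B_r\cap\bar G_\lambda}|b(s,\bar X)-\bar b(s,\bar X)|\,dx.
\]
The second piece, after integration in $s$ and a change of variable through $\bar X$ (using its compression constant $\bar L$), gives a contribution $(\bar L/\delta)\|b-\bar b\|_{L^1((0,T)\times B_\lambda)}$; this will be the source of the $C_{\gamma,r,\eta}\|b-\bar b\|_{L^1}$ term in \eqref{e:mainlemma}.

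For the first piece, I will apply Proposition~\ref{p:estdiff} to $b(s,\cdot)$ and combine with the trivial bound $|b(X)-b(\bar X)|/(\delta+|X-\bar X|)\leq(|b(X)|+|b(\bar X)|)/\delta$ to get
\[
\frac{|b(s,X)-b(s,\bar X)|}{\delta+|X-\bar X|}\leq\min\!\left\{\frac{|b(s,X)|+|b(s,\bar X)|}{\delta},\,U(s,X)+U(s,\bar X)\right\},
\]
where $U(s,\cdot)=\sum_{j,k} M_{\{\Upsilon^{\xi,j}\}}(S_{jk}g_{jk}(s,\cdot))$ as in \eqref{e:exprU}. I then exploit the equi-integrability of $\{g_{jk}\}$ in $L^1((0,T)\times\R^N)$: by Lemma~\ref{l:equiint}, for any $\eps'>0$ there is a splitting $g_{jk}=g_{jk}^1+g_{jk}^2$ with $\|g_{jk}^1\|_{L^1}\leq\eps'$ and $g_{jk}^2$ compactly supported in $L^2$ (with norm depending on $\eps'$). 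Writing correspondingly $U=U^1+U^2$, Theorem~\ref{t:mainest}(ii)--(iii) yields $\lpn U^1(s,\cdot)\rpn_{M^1}\leq C\sum_{j,k}\|g_{jk}^1(s,\cdot)\|_{L^1}$ and $\|U^2(s,\cdot)\|_{L^2}\leq C\sum_{j,k}\|g_{jk}^2(s,\cdot)\|_{L^2}$. Using $\min\{A,B^1+B^2\}\leq B^2+\min\{A,B^1\}$, the $B^2$-piece is integrated by change of variables and Cauchy--Schwarz, producing a term controlled by the $L^2$ norm of $g^2$. For $\min\{A,B^1\}$ I will apply the interpolation Lemma~\ref{Lemma InterpM1} on $\Omega$: its $M^1$ quasi-norm is bounded via $B^1$ (after change of variable by the compression $L,\bar L$) by $C(L+\bar L)\|g^1(s,\cdot)\|_{L^1}$, while its $L^p$ norm is bounded via $A$, using (R3), by $C(L+\bar L)^{1/p}\|b(s,\cdot)\|_{L^p(B_\lambda)}/\delta$. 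The lemma then gives
\[
\int_\Omega\min\{A,B^1\}\,dx\leq C\|g^1(s,\cdot)\|_{L^1}\left[1+\log\!\left(\frac{C_\lambda\|b(s,\cdot)\|_{L^p(B_\lambda)}}{\delta\|g^1(s,\cdot)\|_{L^1}}\right)\right].
\]
Integrating in $s$ (using Jensen's inequality to move time averages inside the concave $\log$), I obtain an estimate of the form $\Phi_\delta(s)\leq C_1(\eps',\lambda)+C_2\eps'\log(1/\delta)+(\bar L/\delta)\|b-\bar b\|_{L^1((0,T)\times B_\lambda)}$.

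To close, I will use the obvious lower bound $\Phi_\delta(s)\geq m(s)\log(1+\gamma/\delta)$, where $m(s)=\Leb^N(\Omega\cap\{|X-\bar X|>\gamma\})$. Dividing through by $\log(1+\gamma/\delta)$ and noting that $\log(1/\delta)/\log(1+\gamma/\delta)\to 1$ as $\delta\to 0$, I first pick $\eps'$ so small that $C_2\eps'\leq\eta/4$, and then pick $\delta$ small enough that $C_1(\eps',\lambda)/\log(1+\gamma/\delta)\leq\eta/4$. Combining with the $\eta/2$ absorbed at the start through the superlevel sets yields \eqref{e:mainlemma}, with $C_{\gamma,r,\eta}=\bar L/[\delta\log(1+\gamma/\delta)]$ once $\delta$ has been frozen. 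The hard part will be the interpolation step for $\min\{A,B^1\}$: neither the $M^1$ bound alone (which would produce a fatal $\log(1/\delta)$ divergence) nor the $L^p$ bound alone (which blows up as $\delta^{-1}$) is sufficient, and Lemma~\ref{Lemma InterpM1} is exactly what lets the two divergences cancel. It is precisely here that equi-integrability---a consequence of the $L^1$ assumption on $g_{jk}$---enters, providing the small coefficient $\eps'$ in front of the surviving $\log(1/\delta)$; the argument would break down for general measures, consistently with the open status of Bressan's compactness conjecture.
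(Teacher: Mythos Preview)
Your proposal is correct and follows essentially the same route as the paper's proof: the functional $\Phi_\delta$ on $B_r\cap G_\lambda\cap\bar G_\lambda$, the differentiation and splitting into the $b-\bar b$ term and the difference-quotient term, the use of Proposition~\ref{p:estdiff} to produce $U$, the equi-integrable decomposition $g_{jk}=g_{jk}^1+g_{jk}^2$, the $L^2$ treatment of $U^2$, the $M^1$--$L^p$ interpolation (Lemma~\ref{Lemma InterpM1}) for the $\min\{A,B^1\}$ piece, and the final choice of $\lambda$, then $\eps'$, then $\delta$. The only notable technical difference is that you apply the interpolation lemma pointwise in $s$ and then integrate in time via Jensen, whereas the paper first integrates $\Phi_\delta'$ over $(t,\tau)$ and then applies Lemma~\ref{Lemma InterpM1} directly to $\varphi(s,x)=\min\{A,B^1\}$ on the space-time domain $(t,\tau)\times\Omega$, using the monotonicity of $z\mapsto z(1+\log^+(K/z))$; this avoids the need to control $\|b(s,\cdot)\|_{L^p(B_\lambda)}$ in $L^1_s$ separately and is slightly cleaner under the bare assumption (R3).
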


\begin{proof}
For any $\delta>0$, $\lambda>0$ and $s\in[t,T]$ let us define the quantity
\begin{equation}\label{e:functional}
\Phi_\delta(s) = \int_{B_r \cap G_\lambda \cap \overline G_\lambda} \log \left( 1+ \frac{|X(s,x) - \overline X (s,x)|}{\delta} \right) \, dx \,,
\end{equation}
where $G_\lambda$ and $\overline G_\lambda$ are the sublevels of $X$ and $\overline X$ respectively, defined as in Definition~\ref{d:sublevel}.
Because of the continuity statement in \eqref{e:rlfclass}, $\Phi_\delta$ is continuous, and since
in \eqref{e:functional} the values of $x$ that are involved correspond to bounded trajectories,
we are able to use \eqref{e:rlfeq} in the classical sense of derivatives of absolutely
continuous functions. Thus  $\Phi_\delta$ is also absolutely continuous, with
\begin{equation*}
\begin{split}
\Phi'_\delta(s) \,=\, & \int_{B_r \cap G_\lambda \cap \overline G_\lambda} \frac{b(s,X(s,x)) - \overline b (s,\overline X(s,x))}{\delta + |X(s,x) - \overline X(s,x)|}\cdot\frac{X(s,x) - \overline X(s,x)}{|X(s,x) - \overline X(s,x)|} \, dx\\
\leq \, & \int_{B_r \cap G_\lambda \cap \overline G_\lambda} 
\frac{|b(s,X(s,x)) - \overline b (s,\overline X(s,x))|}{\delta + |X(s,x) - \overline X(s,x)|} \, dx \\
\leq \, & \int_{B_r \cap G_\lambda \cap \overline G_\lambda} 
\frac{|b(s,\overline X(s,x)) - \overline b (s,\overline X(s,x))|}{\delta + |X(s,x) - \overline X(s,x)|} \, dx + \int_{B_r \cap G_\lambda \cap \overline G_\lambda}  
\frac{|b(s,X(s,x)) -  b (s,\overline X(s,x))|}{\delta + |X(s,x) - \overline X(s,x)|} \, dx \\
\leq \, & \frac{\overline L}{\delta} \int_{B_\lambda} |b(s,x) - \overline b(s,x)| \, dx \\
& +  \int_{B_r \cap G_\lambda \cap \overline G_\lambda}  
\min \left\{ \frac{|b(s,X(s,x))|+|b(s,\overline X(s,x))|}{\delta} \; ; \;  
\frac{|b(s,X(s,x)) - b(s,\overline X (s,x))|}{|X(s,x) - \overline X (s,x) |} \right\}  dx \,.
\end{split}
\end{equation*}
We now apply Proposition~\ref{p:estdiff} for almost all $s$, which gives the existence of a function $U(s) \in M^1(\R^N)$ estimating the difference quotients of the vector field $b$. We obtain
\begin{equation}\label{e:unog}
\begin{split}
\Phi'_\delta(s) \leq \, & \frac{\overline L}{\delta} \| b(s,\cdot) - \overline b(s,\cdot) \|_{L^1(B_\lambda)}  \\
& +  \int_{B_r \cap G_\lambda \cap \overline G_\lambda}  
\min \left\{ \frac{|b(s,X(s,x))|}{\delta} + \frac{|b(s,\overline X(s,x))|}{\delta} \; ; \;  
U(s,X(s,x)) + U(s,\overline X (s,x)) \right\}  dx \,. 
\end{split}
\end{equation}
Now, observing that $\Phi_\delta(t)=0$, for any $\tau\in[t,T]$ we integrate \eqref{e:unog}
over $s\in(t,\tau)$ to get
\begin{equation}\label{e:unogint}
\begin{split}
\Phi_\delta(\tau) \leq \, & \frac{\overline L}{\delta} \| b - \overline b \|_{L^1((t,\tau)\times B_\lambda)}  \\
& \mkern -20mu+  \int_t^\tau\!\!\!\int_{B_r \cap G_\lambda \cap \overline G_\lambda}  
\min \left\{ \frac{|b(s,X(s,x))|}{\delta} + \frac{|b(s,\overline X(s,x))|}{\delta} \; ; \;  
U(s,X(s,x)) + U(s,\overline X (s,x)) \right\}  dx \,ds\,. 
\end{split}
\end{equation}
Recall that $U$ is given by \eqref{e:exprU}. Let us fix $\eps > 0$, that will be chosen later,
and apply Lemma~\ref{l:equiint} to the finite family of functions $g_{jk} \in L^1 \big( (0,T)\times \R^N \big)$.
This gives the existence of a constant $C_\eps$ and a set $A_\eps$ with finite measure such that
for every $j=1,\ldots,N$ and $k=1,\ldots,m$, we have a decomposition
$$
g_{jk}(s,x) = g^1_{jk}(s,x) + g^2_{jk}(s,x)
$$
satisfying
\begin{equation}\label{e:g12eps}
\| g^1_{jk}\|_{L^1((0,T)\times \R^N)} \leq \eps 
\qquad\text{ and }\qquad
\spt(g^2_{jk})\subset A_\eps,\quad
\| g^2_{jk}\|_{L^2((0,T)\times \R^N)} \leq C_\eps \,.
\end{equation}
The constant $C_\eps$ measures the equi-integrability of the family $g_{jk}$ in $ L^1 \big( (0,T)\times \R^N \big)$.
We deduce that 
\begin{equation}\label{e:decg12}
\begin{split}
U \, = \, & \sum_{k = 1}^m \sum_{j=1}^N M_{\{\Upsilon^{\xi,j} , \; \xi \in {\mathbb S}^{N-1}\}} (S_{jk} g_{jk}) \\
\leq \, & \sum_{k = 1}^m \sum_{j=1}^N M_{\{\Upsilon^{\xi,j} , \; \xi \in {\mathbb S}^{N-1}\}} (S_{jk} g^1_{jk})
+ \sum_{k = 1}^m \sum_{j=1}^N M_{\{\Upsilon^{\xi,j} , \; \xi \in {\mathbb S}^{N-1}\}} (S_{jk} g^2_{jk}) \\
\equiv\,& U^1+U^2.
\end{split}
\end{equation}
Plugging \eqref{e:decg12} into \eqref{e:unogint} gives
\begin{equation}\label{e:gdue}
\begin{split}
\Phi_\delta(\tau) \leq \, & \frac{\overline L}{\delta} \| b - \overline b \|_{L^1((t,\tau)\times B_\lambda)}  \\
& \mkern -20mu+  \int_t^\tau\!\!\!\int_{B_r \cap G_\lambda \cap \overline G_\lambda}  
\mkern-10mu\min \left\{ \frac{|b(s,X(s,x))|}{\delta} + \frac{|b(s,\overline X(s,x))|}{\delta} \; ; \;  
U^1(s,X(s,x)) + U^1(s,\overline X (s,x)) \right\}  dx \,ds\\
& \mkern -20mu+  \int_t^\tau\!\!\!\int_{B_r \cap G_\lambda \cap \overline G_\lambda}  
\mkern-10mu\min \left\{ \frac{|b(s,X(s,x))|}{\delta} + \frac{|b(s,\overline X(s,x))|}{\delta} \; ; \;  
U^2(s,X(s,x)) + U^2(s,\overline X (s,x)) \right\}  dx \,ds.
\end{split}
\end{equation}
For the second double integral we write
\begin{equation*}
\begin{split}
&\int_{B_r \cap G_\lambda \cap \overline G_\lambda}  
\mkern-10mu\min \left\{ \frac{|b(s,X(s,x))|}{\delta} + \frac{|b(s,\overline X(s,x))|}{\delta} \; ; \;  
U^2(s,X(s,x)) + U^2(s,\overline X (s,x)) \right\}  dx\\
\leq\,& \int_{B_r \cap G_\lambda \cap \overline G_\lambda}  
\mkern-10mu \biggl(U^2(s,X(s,x)) + U^2(s,\overline X (s,x))\biggr)\,dx\\
\leq\,&(L+\overline L)\int_{B_\lambda}U^2(s,x)dx.
\end{split}
\end{equation*}
Therefore, the second double integral $I_2$ in \eqref{e:gdue} is estimated by
\begin{equation}
	I_2\leq(L+\overline L)\|U^2\|_{L^1((t,\tau)\times B_\lambda)}
	\leq (L+\overline L)\left[(\tau-t) \Leb^N(B_\lambda)\right]^{1/2}\|U^2\|_{L^2((t,\tau)\times \R^N)}.
	\label{e:estI2}
\end{equation}
Now, applying Theorem~\ref{t:mainest} to the operator
\begin{equation}
	g\mapsto \sum_{k = 1}^m \sum_{j=1}^N M_{\{\Upsilon^{\xi,j} , \; \xi \in {\mathbb S}^{N-1}\}} (S_{jk} g_{jk}),
	\label{eq:opmaxsing}
\end{equation}
where the kernels $\Upsilon^{\xi,j}$ are defined in \eqref{e:ups}-\eqref{eq:kernelh}, yields
that this operator \eqref{eq:opmaxsing} is bounded $L^2(\R^N)\rightarrow L^2(\R^N)$
and $L^1(\R^N)\rightarrow M^1(\R^N)$, with constants $P_2$ and $P_1$ respectively, depending
only on the norms of the singular integral operators $S_{jk}$ (recall \eqref{e:care} that
enables to control  a finite sum in $M^1$).
Then, using the simple inequality
$$
	\lpn u(t,x) \rpn_{M^1_{t,x}} \leq \big\| \lpn u(t,x) \rpn_{M^1_x} \big\|_{L^1_t} \,,
$$
we obtain that
\begin{equation*}
\begin{split}
\lpn U^1 \rpn_{M^1((t,\tau) \times \R^N)}
\, \leq \, & P_1 \| g^1 \|_{L^1((t,\tau) \times \R^N)}\,, \\
\left\| U^2 \right\|_{L^2((t,\tau) \times \R^N)}
\, \leq \, & P_2 \| g^2 \|_{L^2((t,\tau) \times \R^N)}\,.
\end{split}
\end{equation*}
This last inequality yields with \eqref{e:estI2} that
\begin{equation}
	I_2\leq (L+\overline L)P_2\left[(\tau-t) \Leb^N(B_\lambda)\right]^{1/2}\| g^2 \|_{L^2((t,\tau) \times \R^N)}.
	\label{e:I_2final}
\end{equation}
Next, we would like to estimate the first double integral $I_1$ in \eqref{e:gdue}.
We observe that
$$
	\lpn U^1(s,X(s,x))\rpn_{M^1((t,\tau)\times (B_r \cap G_\lambda \cap \overline G_\lambda))}
	\leq L\lpn U^1(s,x)\rpn_{M^1((t,\tau)\times B_\lambda)}\,,
$$
and similarly for $\overline X$. Thus denoting by
$$
	\varphi(s,x)=\min \left\{ \frac{|b(s,X(s,x))|}{\delta} + \frac{|b(s,\overline X(s,x))|}{\delta} \; ; \;  
U^1(s,X(s,x)) + U^1(s,\overline X (s,x)) \right\} \,,
$$
we have
\begin{equation}
	\lpn\varphi\rpn_{M^1((t,\tau)\times (B_r \cap G_\lambda \cap \overline G_\lambda))}
	\leq 2(L+\overline L)\lpn U^1\rpn_{M^1((t,\tau)\times B_\lambda)}
	\leq2(L+\overline L)P_1\| g^1 \|_{L^1((t,\tau) \times \R^N)}.
	\label{e:M1varphi}
\end{equation}
But on the other side,
\begin{equation}
\begin{split}
	\|\varphi\|_{L^p((t,\tau)\times (B_r \cap G_\lambda \cap \overline G_\lambda))}
	\leq\, & \frac{1}{\delta}\left\||b(s,X(s,x))| + |b(s,\overline X(s,x))|\right\|_{L^p((t,\tau)\times (B_r \cap G_\lambda \cap \overline G_\lambda))}\\
	\leq\, &\frac{L^{1/p}+{\overline L}^{1/p}}{\delta}\|b\|_{L^p((t,\tau) \times B_\lambda)}
	\leq 2\frac{(L+\overline L)^{1/p}}{\delta}\|b\|_{L^p((t,\tau) \times B_\lambda)}.
\end{split}
	\label{eq:Lpvarphi}
\end{equation}
Apply now the interpolation Lemma~\ref{Lemma InterpM1} to the function $\varphi$
and using \eqref{e:M1varphi}, \eqref{eq:Lpvarphi} gives
\begin{equation}
\begin{split}
	I_1=&\,\|\varphi\|_{L^1((t,\tau)\times (B_r \cap G_\lambda \cap \overline G_\lambda))}\\
	\leq&\,\frac{p}{p-1}2(L+\overline L)P_1\| g^1 \|_{L^1((t,\tau) \times \R^N)}\\
	&\,\mkern 30mu
	\times\Biggl[1+\log^+\left(\frac{(L+\overline L)^{1/p}\|b\|_{L^p((t,\tau) \times B_\lambda)}}
	{(L+\overline L)P_1\| g^1 \|_{L^1((t,\tau) \times \R^N)}}
	\frac{\left[(\tau-t)\Leb^N(B_r)\right]^{1-1/p}}{\delta}\right)\Biggr],
\end{split}
	\label{eq:intervarphi}
\end{equation}
where we used that the map $z \mapsto z ( 1 + \log^+ (K/z))$ is nondecreasing over $[0,\infty)$.
Plugging the estimates \eqref{eq:intervarphi}, \eqref{e:I_2final} in \eqref{e:gdue},
we deduce with \eqref{e:g12eps} that
\begin{equation}\label{e:integrata}
\begin{split}
\Phi_\delta(\tau) \, \leq \, &  \frac{\overline L}{\delta} \| b  - \overline b \|_{L^1((t,\tau) \times B_\lambda)}
+ \big( L + \overline L \big) P_2 \Big[ (\tau - t) \Leb^N(B_\lambda)\Big]^{1/2} C_\eps  \\
& +  \frac{p}{p-1}2 \big( L + \overline L \big) P_1 \eps
\left[ 1 + \log^+ \left(   \frac{ (L+\overline L)^{1/p}\| b \|_{L^p((t,\tau) \times B_\lambda)}}{(L+\overline L)P_1 \eps} 
\frac{\left[ (\tau-t) \Leb^N (B_r) \right]^{1-1/p}}{\delta} \right) \right].
\end{split}
\end{equation}
But according to the definition \eqref{e:functional} of $\Phi_\delta(\tau)$, given $\gamma>0$, we observe that
\begin{equation*}
\begin{split}
\Phi_\delta(\tau) \, \geq \, & \int_{B_r \cap \{ |X(\tau,x)-\overline X(\tau,x)| > \gamma\} \cap G_\lambda \cap \overline G_\lambda} 
\log \left( 1 +\frac{\gamma}{\delta} \right) \, dx \\
= \, & \log \left( 1+\frac{\gamma}{\delta} \right) \Leb^N \Big( B_r \cap \big\{ |X(\tau,\cdot)-\overline X(\tau,\cdot)| > \gamma \big\} \cap G_\lambda \cap \overline G_\lambda \Big) \,,
\end{split}
\end{equation*}
which implies
\begin{equation}\label{e:basiclevel}
\begin{split}
\Leb^N \Big( B_r \cap & \big\{ |X(\tau,\cdot)-\overline X(\tau,\cdot)| > \gamma \big\} \Big) \\
\leq \, & \Leb^N \Big( B_r \cap \big\{ |X(\tau,\cdot)-\overline X(\tau,\cdot)| > \gamma \big\} \cap G_\lambda \cap \overline G_\lambda \Big) + \Leb^N (B_r \setminus G_\lambda) + \Leb^N (B_r \setminus \overline G_\lambda) \\
\leq \, & \frac{\Phi_\delta(\tau)}{ \displaystyle \log \left( 1+\frac{\gamma}{\delta} \right)}
 + \Leb^N (B_r \setminus G_\lambda) + \Leb^N (B_r \setminus \overline G_\lambda) \,.
\end{split}
\end{equation}
Combining \eqref{e:basiclevel} and \eqref{e:integrata} we get
\begin{equation}
\begin{split}
\Leb^N \Big( B_r \cap & \big\{ |X(\tau,\cdot)-\overline X(\tau,\cdot)| > \gamma \big\} \Big) \\
\leq \, & \frac{\overline L}{\delta  \log \left( 1+\frac{\gamma}{\delta} \right)} 
\| b - \overline b \|_{L^1((t,\tau) \times B_\lambda)} + \frac{ L + \overline L }{\log \left( 1+\frac{\gamma}{\delta} \right)} P_2 \Big[ (\tau - t) \Leb^N(B_\lambda)\Big]^{1/2} C_\eps \\
& +  2\frac{p}{p-1} \frac{( L + \overline L ) P_1 \eps}{\log \left( 1+\frac{\gamma}{\delta} \right)}
\left[ 1 + \log^+ \left(   \frac{ (L+\overline L)^{1/p}\| b \|_{L^p((t,\tau) \times B_\lambda)}}{(L+\overline L)P_1 \eps} 
\frac{\left[ (\tau-t) \Leb^N (B_r) \right]^{1-1/p}}{\delta} \right) \right] \\ 
&  + \Leb^N (B_r \setminus G_\lambda) + \Leb^N (B_r \setminus \overline G_\lambda) \\
= \, & I + II + III + IV + V \,.
\end{split}
\end{equation}
We are now ready to conclude. Let use fix $\eta >0$. According to Lemma~\ref{l:superlevels},
we can choose $\lambda>0$ large enough to ensure that $IV \leq \eta/4$ and $V \leq \eta/4$.
We then consider $III$. We can find $\eps>0$ small enough in such a way that $ III \leq \eta/4$
for every $0<\delta \leq\gamma$ (notice that $III$ is uniformly bounded as $\delta \downarrow 0$).
Since at this point $\lambda$ and $\eps$ (and thus $C_\eps$) are fixed,
we choose $\delta>0$ small enough in such a way that $ II \leq \eta/4$. By setting
$$ C_{\gamma,r,\eta} =  \frac{\overline L}{\delta  \displaystyle \log \left( 1+\frac{\gamma}{\delta} \right)} \,, $$
where $\delta>0$ has been chosen according to the above discussion, the proof is completed.
\end{proof}

\section{Regular Lagrangian flow: existence, uniqueness, stability and further properties}\label{s:consequences}

In this section we show how the estimate in Proposition~\ref{l:main} implies the well-posedness
and further properties of the regular Lagrangian flow. We start by showing uniqueness and stability.

\begin{theorem}[Uniqueness]\label{t:unirlf}
Let $b$ be a vector field satisfying assumptions (R1), (R2) and (R3), and fix $t \in [0,T)$. Then, the regular Lagrangian flow associated to $b$ starting at time $t$, if it exists, is unique.
\end{theorem}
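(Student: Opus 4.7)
The plan is to deduce uniqueness directly from the fundamental estimate of Proposition~\ref{l:main}, by comparing a flow with itself. Suppose $X$ and $\bar X$ are both regular Lagrangian flows relative to the same vector field $b$ starting at the same time $t$, with compression constants $L$ and $\bar L$ respectively. Since both assumptions (R1), (R2), (R3) hold for $b$ itself, we can apply Proposition~\ref{l:main} with $\bar b = b$. The crucial observation is that the right-hand side of \eqref{e:mainlemma} then reduces to just $\eta$, because $\|b-\bar b\|_{L^1((0,T)\times B_\lambda)} = 0$ no matter how $\lambda$ and $C_{\gamma,r,\eta}$ are chosen.

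Therefore, for every $\gamma>0$, $r>0$, $\eta>0$ and every $s\in[t,T]$,
\begin{equation*}
\Leb^N \Big( B_r \cap \big\{ |X(s,\cdot)-\bar X(s,\cdot)|>\gamma \big\}\Big) \leq \eta.
\end{equation*}
Letting $\eta\downarrow 0$ yields $\Leb^N(B_r\cap\{|X(s,\cdot)-\bar X(s,\cdot)|>\gamma\}) = 0$ for every $\gamma, r>0$ and every $s\in[t,T]$. This means that for every fixed $s$, $X(s,\cdot) = \bar X(s,\cdot)$ for $\Leb^N$-almost every $x\in\R^N$.

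To upgrade this to a statement valid simultaneously for a.e. $x$ and all $s$, I would use the continuity in $s$ provided by \eqref{e:rlfclass}: both $X$ and $\bar X$ belong to $C([t,T]_s; L^0_{\loc}(\R^N_x))$. Picking a countable dense set $\{s_n\}\subset[t,T]$, the equality $X(s_n,\cdot)=\bar X(s_n,\cdot)$ a.e. for each $n$ combined with continuity in $s$ with values in $L^0_{\loc}$ implies $X(s,\cdot)=\bar X(s,\cdot)$ for every $s\in[t,T]$ in the sense of local convergence in measure, hence $\Leb^N$-a.e. identity of the two flows as maps on $[t,T]\times\R^N$.

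The main (mild) obstacle is simply checking that Proposition~\ref{l:main} applies with $\bar b = b$: assumption (R1) is used for both vector fields, but assumptions (R2)--(R3) are needed only on the first vector field, which is exactly our $b$. No additional regularity hypothesis is required on $\bar X$ beyond being a regular Lagrangian flow with some compression constant $\bar L$. Hence the estimate is genuinely symmetric in the two flows (even though it is asymmetric in the two vector fields), and the conclusion follows with no further work.
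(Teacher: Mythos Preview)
Your proposal is correct and follows essentially the same approach as the paper: apply Proposition~\ref{l:main} with $\bar b=b$ so that the $\|b-\bar b\|$ term vanishes, and let $\eta\downarrow 0$. The additional paragraph about upgrading to a statement valid for all $s$ simultaneously is not needed, since uniqueness is already understood as equality of $X$ and $\bar X$ as elements of $C([t,T];L^0_{\loc})$, which is exactly what you obtain from the pointwise-in-$s$ equality.
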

\begin{proof} It is a straightforward consequence of Proposition~\ref{l:main}.
Indeed, consider two regular Lagrangian flows $X$, $\overline X$ associated to $b$ and starting at time $t$, with compression constants $L$ and $\bar L$. Then we obtain the validity of \eqref{e:mainlemma} with $b = \bar b$. Namely, for every $\gamma > 0$ and every $r>0$, there holds
$$
	\Leb^N \Big( B_r \cap \big\{ |X(s,\cdot) - \overline X (s,\cdot) | > \gamma \big\} \Big) \leq \eta
	\qquad \mbox{for all }\eta > 0\mbox{ and }s\in[t,T].
$$
This implies that $X=\overline X$.
\end{proof}

\begin{theorem}[Stability]\label{t:stabrlf}
Let $\{ b_n \}$ be a sequence of vector fields satisfying assumption (R1), converging in $L^1_\loc([0,T] \times \R^N)$
to a vector field $b$ which satisfies assumptions (R1), (R2) and (R3).
Assume that there exist $X_n$ and $X$ regular Lagrangian flows starting at time $t$ associated to $b_n$
and to $b$ respectively, and denote by $L_n$ and $L$ the compression constants of the flows.
Suppose that:
\begin{itemize}
\item For some decomposition $b_n / (1+|x|) = \tilde b_{n,1} + \tilde b_{n,2}$ as in assumption (R1), we have that
$$
\| \tilde b_{n,1} \|_{L^1(L^1)} + \| \tilde b_{n,2} \|_{L^1(L^\infty)}
\qquad \text{ is equi-bounded in $n$;}
$$
\item The sequence $\{ L_n \}$ is equi-bounded.
\end{itemize}
Then the sequence $\{ X_n \}$ converges to $X$ locally in measure in $\R^N$, uniformly with respect to $s$ and $t$.
\end{theorem}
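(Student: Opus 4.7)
The plan is to deduce this stability statement directly from the fundamental estimate of Proposition~\ref{l:main}, using $b$ as the ``good'' vector field (which by hypothesis satisfies (R1), (R2), (R3)) and $\bar b = b_n$ as the perturbed one (which needs only (R1)). Fix $\gamma > 0$, $r > 0$, and $\eta > 0$. Applying that proposition with the pair $(b, b_n)$ would produce $\lambda > 0$ and $C_{\gamma,r,\eta} > 0$ such that, for every $n$,
\begin{equation*}
\sup_{s \in [t,T]} \Leb^N \Big( B_r \cap \big\{ |X(s,\cdot) - X_n (s,\cdot) | > \gamma \big\} \Big) \leq C_{\gamma,r,\eta} \| b - b_n \|_{L^1((0,T) \times B_\lambda)} + \eta.
\end{equation*}

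The main step, and essentially the only thing that requires any work, is to verify that $\lambda$ and $C_{\gamma,r,\eta}$ can be chosen \emph{independently of $n$}. I would go through the list of dependencies stated in Proposition~\ref{l:main} one by one: the equi-integrability of the $g_{jk}$, the norms $C_0+C_1+\|\widehat K\|_\infty$ of the singular integral operators $S_{jk}$, and $\|b\|_{L^p((0,T)\times B_\lambda)}$ refer exclusively to the fixed limit $b$, so they yield $n$-independent constants. The decomposition norm $\|\tilde b_1\|_{L^1(L^1)} + \|\tilde b_2\|_{L^1(L^\infty)}$ is fixed for $b$, and the analogous quantity for $b_n$ is equi-bounded by the first assumption of the theorem. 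Finally, the compression constants $L$ (fixed) and $L_n$ (equi-bounded by the second assumption) enter only through their magnitudes. Tracing the proof of Proposition~\ref{l:main} confirms that these are all the quantities that determine $\lambda$ and $C_{\gamma,r,\eta}$, so a single uniform choice works for all $n$.

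Once uniformity is established, the conclusion is immediate: the $L^1_{\loc}([0,T]\times\R^N)$ convergence $b_n \to b$ gives $\|b - b_n\|_{L^1((0,T) \times B_\lambda)} \to 0$, so letting $n\to\infty$ in the displayed inequality yields
\begin{equation*}
\limsup_{n \to \infty}\, \sup_{s \in [t,T]} \Leb^N \Big( B_r \cap \big\{ |X(s,\cdot) - X_n (s,\cdot) | > \gamma \big\} \Big) \leq \eta,
\end{equation*}
and since $\eta$ was arbitrary the limsup is $0$. This is precisely local convergence in measure, uniform in $s\in[t,T]$. Uniformity in the starting time $t$ follows from the same argument applied for each $t\in[0,T)$ simultaneously, since the constants produced by Proposition~\ref{l:main} are expressed in terms of global norms on $(0,T)\times\R^N$ and do not depend on $t$ in any non-uniform way. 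The only conceivable obstacle is a hidden $n$-dependence in the constants, but the equi-boundedness hypotheses on the decompositions and on the compression constants are tailored precisely to close this off.
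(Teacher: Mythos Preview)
Your proposal is correct and follows essentially the same approach as the paper: apply Proposition~\ref{l:main} with $\bar b = b_n$, observe that the assumptions guarantee the constants $\lambda$ and $C_{\gamma,r,\eta}$ are independent of $n$, and conclude using the $L^1_\loc$ convergence of $b_n$ to $b$. Your write-up is in fact more explicit than the paper's in verifying the $n$-independence of the constants and in addressing uniformity in the starting time $t$.
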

\begin{proof} We apply Proposition~\ref{l:main} with $\bar b=b_n$. 
According to our assumptions, the constants $\lambda > 0$ and $C_{\gamma,r,\eta}$ can be chosen independently of $n$.
Thus, we can choose $\bar n$ large enough in such a way that
$$
C_{\gamma,r,\eta}\| b - b_n \|_{L^1((0,T) \times B_\lambda)} \leq \eta
\qquad \text{ for all $n \geq \bar n$.}
$$
This means that, given any $r>0$ and any $\gamma>0$, for every $\eta>0$ we can find $\bar n$ for which
$$
\Leb^N \Big( B_r \cap \big\{ |X(s,\cdot) - X_n (s,\cdot) | > \gamma \big\} \Big)
\leq 2 \eta \qquad \text{ for all } n \geq \bar n\mbox{ and } s\in[t,T].
$$ 
This is precisely the desired thesis.
\end{proof}

The existence of the regular Lagrangian flow follows by an approximation procedure,
again with the help of Proposition~\ref{l:main} in order to to derive a compactness estimate.

\begin{lemma}[Compactness]\label{Lemma compact}
Let $\{ b_n \}$ be a sequence of vector fields satisfying assumption (R1), (R2) and (R3), converging in $L^1_\loc([0,T] \times \R^N)$
to a vector field $b$ which satisfies assumptions (R1), (R2) and (R3).
Assume that there exist $X_n$ regular Lagrangian flows starting at time $t$ associated to $b_n$,
and denote by $L_n$ the compression constants of the flows.
Suppose that:
\begin{itemize}
\item For some decomposition $b_n / (1+|x|) = \tilde b_{n,1} + \tilde b_{n,2}$ as in assumption (R1), we have that
$$
\| \tilde b_{n,1} \|_{L^1(L^1)} + \| \tilde b_{n,2} \|_{L^1(L^\infty)}
\qquad \text{ is equi-bounded in $n$;}
$$
\item The sequence $\{ L_n \}$ is equi-bounded;
\item For some $p>1$ the norms $\|b_n\|_{L^p((0,T)\times B_r)}$ are equi-bounded for any fixed $r>0$;
\item The norms of the singular integral operators associated to the vector fields $b_n$ (as well as their number $m$) are equi-bounded;
\item The functions $g_{jk}^n$ are uniformly in $n$ equi-integrable in $L^1 \big( (0,T)\times\R^N \big)$.
\end{itemize}
Then the sequence $\{ X_n \}$ converges as $n\rightarrow\infty$ to some $X$ locally in measure in $\R^N$, uniformly with respect to $s$ and $t$,
and $X$ is a regular Lagrangian flow starting at time $t$ associated to $b$.
\end{lemma}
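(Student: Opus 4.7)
The plan is to use Proposition~\ref{l:main} as a Cauchy estimate for the sequence $\{X_n\}$, and then verify that the limit is a regular Lagrangian flow for $b$.

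First I would apply Proposition~\ref{l:main} to the pair $(b_n, b_m)$, taking $b = b_n$ (which satisfies (R1), (R2), (R3)) and $\bar b = b_m$ (which satisfies (R1)). The quantitative dependence of the constants $\lambda$ and $C_{\gamma,r,\eta}$ listed in the statement of Proposition~\ref{l:main} matches exactly the five equi-boundedness assumptions of the present lemma, so for every $\gamma, r, \eta > 0$ I can choose $\lambda$ and $C_{\gamma,r,\eta}$ independent of $n, m$ such that
\begin{equation*}
\Leb^N \Big( B_r \cap \big\{ |X_n(s,\cdot) - X_m(s,\cdot)| > \gamma \big\} \Big)
\leq C_{\gamma,r,\eta} \| b_n - b_m \|_{L^1((0,T) \times B_\lambda)} + \eta
\end{equation*}
uniformly in $s \in [t,T]$. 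Since $b_n \to b$ in $L^1_\loc$, the sequence $\{b_n\}$ is $L^1((0,T)\times B_\lambda)$-Cauchy, and letting $n, m \to \infty$ followed by $\eta \to 0$ shows that $\{X_n(s,\cdot)\}$ is Cauchy locally in measure, uniformly in $s$. A standard diagonal/completeness argument for local convergence in measure produces a limit $X \in C([t,T]; L^0_\loc(\R^N))$.

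Next I would verify that $X$ is a regular Lagrangian flow for $b$. The initial condition $X(t,x) = x$ passes to the limit immediately from $X_n(t,x) = x$. The compression bound with constant $L = \limsup L_n$ follows by passing to the limit in \eqref{e:comprfunct} applied to $X_n$ with nonnegative continuous $\varphi$ of compact support: on the left, $\varphi(X_n(s,\cdot)) \to \varphi(X(s,\cdot))$ locally in measure, combined with equi-integrability coming from the uniform $L^\infty$ bound on $\varphi$ and Remark~\ref{r:Xsmallinfinity} (which gives the uniform tail control needed to upgrade convergence in measure to $L^1$ convergence of the integrals). Lemma~\ref{l:superlevels} then provides the $\log L_\loc$ bound \eqref{e:rlfclass} for $X$.

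The main obstacle, and the delicate step, is passing to the limit in the renormalized equation \eqref{e:rlfeq} for $X_n$. For a test function $\beta$ as in Definition~\ref{d:rlf}(i), the left-hand side converges in $\D'$ because $\beta(X_n) \to \beta(X)$ in $L^1_\loc$, using the $\log L_\loc$ equi-bound and the convergence in measure. For the right-hand side one writes
\begin{equation*}
\beta'(X_n) b_n(s, X_n) - \beta'(X) b(s, X)
= \beta'(X_n)\bigl[b_n(s,X_n) - b(s,X_n)\bigr] + \bigl[\beta'(X_n) b(s,X_n) - \beta'(X) b(s,X)\bigr].
\end{equation*}
The first bracket is controlled in $L^1_\loc$ by the compression bound: its integral against any $\varphi \in C_c$ is bounded by $C \sup_n L_n \, \|b_n - b\|_{L^1((0,T)\times B_R)} \to 0$ for suitable $R$. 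For the second bracket, I would approximate $b$ by a sequence of continuous compactly supported vector fields $b^\eps$ in $L^1((0,T)\times B_R)$: for $b^\eps$ continuity combined with convergence of $X_n$ in measure and the growth condition on $\beta'$ gives convergence by dominated convergence, while the error $b - b^\eps$ is controlled uniformly in $n$ using the compression bound on $X_n$ (to change variables) and then on $X$. Letting $\eps \to 0$ closes the argument, so $X$ satisfies \eqref{e:rlfeq} for $b$ and is the sought regular Lagrangian flow.
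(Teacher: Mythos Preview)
Your Cauchy argument via Proposition~\ref{l:main} is correct and is exactly what the paper does. The difficulty lies in passing to the limit in the renormalized equation~\eqref{e:rlfeq}, and here your argument has a genuine gap.

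For the first bracket $\beta'(X_n)\bigl[b_n(s,X_n)-b(s,X_n)\bigr]$, you claim the integral against $\varphi\in C_c(B_r)$ is bounded by $C\,\sup_n L_n\,\|b_n-b\|_{L^1((0,T)\times B_R)}$ ``for suitable $R$''. But there is no such $R$: the compression bound~\eqref{e:comprfunct} lets you change variables only \emph{globally}, i.e.\ it gives at best $\int_{B_r}|\beta'(X_n)|\,|b_n-b|(s,X_n)\,dx\le L_n\int_{\R^N}|\beta'(y)|\,|b_n-b|(s,y)\,dy$, and this last integral need not be finite, let alone small, since from (R1) one only controls $b/(1+|x|)$ by the sum of an $L^1$ function and an $L^\infty$ function. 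The same problem hits your second bracket: approximating $b$ by a compactly supported $b^\eps$ in $L^1((0,T)\times B_R)$ leaves an error $b-b^\eps$ which, after the change of variables along $X_n$, is integrated over all of $\R^N$ against $|\beta'|$, and the $\tilde b_2$ contribution does not vanish.

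The paper repairs this by first reducing to $\beta\in C^1_c(\R^N)$ via the cutoff $\beta_\epsilon(z)=\beta(z)\chi\bigl(\epsilon\log(2+|z|^2)\bigr)$. Once $\beta'$ has compact support in some $B_R$, the right-hand side $\beta'(X_n)b_n(s,X_n)$ vanishes unless $X_n\in B_R$, and on that set the uniform bound $\|b_n\|_{L^p((0,T)\times B_R)}$ from the hypotheses gives equi-integrability; the convergence in measure then suffices (e.g.\ via Lusin's theorem). A minor further simplification: for the compression bound~(iii) the paper just uses Fatou's lemma on $\int\varphi(X_n)\,dx$ for $\varphi\in C_c$, $\varphi\ge 0$, rather than appealing to the tail estimate of Remark~\ref{r:Xsmallinfinity}.
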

\begin{proof} The application of Proposition~\ref{l:main} to the vector fields
$b_n$ and $b_m$ yields that for any $r>0$ and $\gamma>0$
$$ \Leb^N \big( B_r\cap\left\{|X_n(s,\cdot)-X_m(s,\cdot)|>\gamma\right\} \big)
	\rightarrow 0\qquad\mbox{as }m,n\rightarrow\infty,\mbox{ uniformly in }s,t. $$
Thus there exists $X\in C\big( [t,T]_s ; L^0_\loc(\R^N_x) \big)$ such that
$X_n\rightarrow X$ locally in measure in $\R^N$, uniformly in $s,t$.
The bound \eqref{eq:logpoint} being uniform in $n$ and $s,t$, we deduce that
$X\in {\mathcal B}\big( [t,T]_s ; \log L_\loc(\R^N) \big)$. It satisfies obviously (ii)
in Definition~\ref{d:rlf}. For (iii), it is enough to prove \eqref{e:comprfunct} for $\varphi\in C_c(\R^N)$,
$\varphi\geq 0$, and it follows from Fatou's lemma, with $L=\liminf L_n$.
Finally, for proving (i) in Definition~\ref{d:rlf}, it is enough to get \eqref{e:rlfeq}
for $\beta\in C^1_c(\R^N)$, because a general $\beta$ can be approximated by
$\beta_\epsilon(z)=\beta(z)\chi\bigl(\epsilon\log(2+|z|^2)\bigr)$, where $\chi\in C^\infty_c([0,\infty))$,
$\chi\geq 0$, $\chi(y)=1$ for $y\leq 1$ (use Lebesgue's theorem).
Now, for $\beta\in C^1_c(\R^N)$, we can pass to the limit from the equation \eqref{e:rlfeq}
written for $X_n$, because then $X_n(s,t,x)$ lies in a fixed ball $B_r$ (the support of $\beta$), and we have the uniform bound
$\|b_n\|_{L^p((0,T)\times B_r)}$, which implies local equi-integrability. Thus in this context it is enough to prove the local convergence
in measure of $\beta(X_n)$ and $\beta'(X_n)b_n(s,X_n)$ to $\beta(X)$ and $\beta'(X)b(s,X)$ respectively,
which can be obtained with standard analysis (see for example the proof of Theorem~\ref{t:existrlf} below,
with the use of Lusin's theorem).
Therefore $X$ is a regular Lagrangian flows starting at time $t$ associated to $b$.
\end{proof}

In order to find a sequence of approximations $b_n$ to $b$ with uniformly
bounded compression constants, a convenient method is to make a further assumption
on the divergence of the vector field, as we describe in the following
theorem (but see also Remark~\ref{r:abstrapprox}).

\begin{theorem}[Existence]\label{t:existrlf}
Let $b$ be a vector field satisfying assumptions (R1), (R2) and (R3), and assume that 
\begin{equation}\label{e:ipodiv}
\div b \geq \alpha(t)\quad\mbox{in }(0,T)\times\R^N, \qquad \text{ with $\alpha \in L^1(0,T)$.}
\end{equation} 
Then, for all $t \in [0,T)$ there exists a regular Lagrangian flow associated to $b$ starting at time $t$.
Moreover,  the flow $X$ satisfies
\begin{equation}\label{e:contflow} 
X\in C \big( D_T ; L^0_\loc(\R^N_x) \big)\cap {\mathcal B} \big( D_T ; \log L_\loc(\R^N_x) \big) \,,
\end{equation}
where $D_T=\left\{(s,t);0\leq t\leq s\leq T\right\}$,
and for every $0\leq t\leq\tau\leq s\leq T$ there holds
\begin{equation}\label{e:semigroupplus}
X \big( s,\tau, X(\tau,t,x) \big) = X (s,t,x)
\qquad \text{ for $\Leb^N$-a.e.~$x \in \R^N$.}
\end{equation}
\end{theorem}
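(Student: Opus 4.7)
The strategy is to construct the flow by smooth approximation and then invoke the compactness Lemma~\ref{Lemma compact}; the role of the extra divergence hypothesis \eqref{e:ipodiv} is to deliver a uniform compression bound on the approximating flows. Let $\zeta_n$ be a standard space-time mollifier and set $b_n = b * \zeta_n$. Each $b_n$ is smooth with derivatives locally bounded on $[0,T] \times \R^N$, so the classical Cauchy-Lipschitz theorem produces a smooth flow $X_n \in C^1(D_T \times \R^N; \R^N)$, which is automatically a regular Lagrangian flow for $b_n$ starting at any $t \in [0,T)$.

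I now verify uniformly in $n$ the hypotheses of Lemma~\ref{Lemma compact}. The convergence $b_n \to b$ in $L^p_\loc([0,T] \times \R^N)$ is classical. For (R1), starting from a decomposition $b/(1+|x|) = \tilde b_1 + \tilde b_2$ and using the elementary inequality $1+|x-y| \leq (1+|x|)(1+|y|)$ together with the compact support of $\zeta_n$, one produces a decomposition of $b_n/(1+|x|)$ whose $L^1(L^1) + L^1(L^\infty)$ norm is bounded uniformly in $n$. For (R2), since the singular integral operators $S_{jk}$ are Fourier multipliers independent of $n$, one has $\partial_j b_n = \sum_k S_{jk}(g_{jk} * \zeta_n)$ with the same operators $S_{jk}$, and the functions $g_{jk}^n \equiv g_{jk} * \zeta_n \to g_{jk}$ in $L^1((0,T) \times \R^N)$, hence form an equi-integrable family. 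The $L^p_\loc$ bound (R3) passes to $b_n$ as well. The decisive uniform estimate is the bound on the compression constants $L_n$: from \eqref{e:ipodiv} we get $\div b_n = (\div b) * \zeta_n \geq \alpha_n$ with $\|\alpha_n^-\|_{L^1(0,T)}$ uniformly bounded, and the classical Liouville formula
\begin{equation*}
	JX_n(s,t,x) = \exp\left(\int_t^s \div b_n\bigl(\tau, X_n(\tau,t,x)\bigr)\,d\tau\right)
\end{equation*}
gives a uniform positive lower bound on the Jacobian, hence an equi-bound on $L_n$.

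Lemma~\ref{Lemma compact} then produces a regular Lagrangian flow $X(\cdot, t, \cdot)$ associated to $b$ starting at time $t$, with $X_n \to X$ locally in measure uniformly in $s$. Since the constants in Proposition~\ref{l:main} depend on the data of $b$ and on the equi-integrability of the $g_{jk}^n$ but not on the starting time $t$, the same argument shows that the convergence is in fact uniform jointly in $(s,t) \in D_T$. Each $X_n$ is smooth and therefore belongs to $C(D_T; C(\R^N)) \hookrightarrow C(D_T; L^0_\loc(\R^N))$; the uniform joint convergence therefore transfers to $X$ the joint continuity stated in \eqref{e:contflow}, while the $\log L_\loc$ bound follows from the $n$- and $(s,t)$-uniform estimate \eqref{eq:logpoint}.

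For the semigroup identity \eqref{e:semigroupplus}, it holds pointwise for every smooth $X_n$. Passing to the limit requires a continuity-of-composition statement: if $F_n \to F$ locally in measure with the $F_n$ of uniformly bounded compression, and $Y_n \to Y$ locally in measure, then $F_n \circ Y_n \to F \circ Y$ locally in measure. This is a standard Lusin-type argument: approximate $F$ by a continuous function off a small set and use the uniform compression of the $F_n$ to guarantee that the mass of $\{Y_n$ hits that small set$\}$ is small, uniformly in $n$. Applied with $F_n = X_n(s,\tau,\cdot)$ and $Y_n = X_n(\tau,t,\cdot)$, this yields \eqref{e:semigroupplus} in the limit. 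The main obstacle throughout the proof, and the reason why the divergence hypothesis \eqref{e:ipodiv} is imposed, is precisely to secure the uniform bound on the compression constants $L_n$; once this is in hand, the remainder is a careful bookkeeping of the hypotheses of Lemma~\ref{Lemma compact}.
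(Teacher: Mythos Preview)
Your proof is correct and follows essentially the same route as the paper: mollify $b$, verify the hypotheses of Lemma~\ref{Lemma compact} uniformly in $n$ (with the divergence bound \eqref{e:ipodiv} supplying the uniform compression), and pass to the limit in the smooth semigroup identity via a Lusin argument. Two small points: the paper mollifies in the space variable only, which is marginally cleaner because it preserves the lower bound $\div b_n \geq \alpha(t)$ \emph{exactly} and avoids any extension of $b$ outside $(0,T)$; and in your Lusin sketch it is the uniform compression of the inner maps $Y_n=X_n(\tau,t,\cdot)$ (together with the sublevel estimates of Lemma~\ref{l:superlevels}), not of the outer maps $F_n$, that controls $\Leb^N(\{x:Y_n(x)\in E\})$ for the small exceptional set $E$.
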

\begin{proof} We fix a positive radial convolution kernel $\zeta$ in $\R^N$, with $\spt \zeta \subset B_1$.
By defining $b_n = b \mathop{*}\limits_x \zeta_n$,
it is immediate to check that  there exist decompositions $b_n / (1+|x|) = \tilde b_{n,1} + \tilde b_{n,2}$ as in \eqref{e:grass2} for which 
$$
\| \tilde b_{n,1} \|_{L^1(L^1)} + \| \tilde b_{n,2} \|_{L^1(L^\infty)}
\qquad \text{ is equi-bounded in $n$,}
$$
and that the sequence $\{ b_n \}$ is equi-bounded in $L^p_\loc([0,T] \times \R^N)$.
Moreover, we have
$$
\partial_j b_n = \sum_{k=1}^m S_{jk} \big( g_{jk} \big)_n \qquad \text{in $\D' \big( (0,T) \times \R^N \big)$,}
$$
where we have set
$$
\big( g_{jk} \big)_n = g_{jk} \mathop{*}\limits_x \zeta_n \,.
$$
Thus, for all $j=1,\ldots,N$ and $k=1,\ldots,m$, the family $\{ (g_{jk})_n \}_{n \in \N}$ is bounded in 
$L^1 \big( (0,T)\times\R^N \big)$ and equi-integrable in $(0,T) \times \R^N$
(indeed it converges strongly in $L^1 \big( (0,T)\times\R^N \big)$ to $g_{jk}$).

Let us consider, for every $n$, the regular Lagrangian flow $X_n$ associated to $b_n$ starting at time $t$, which indeed is a classical flow, being each $b_n$ smooth with respect to $x$.
By \eqref{e:ipodiv} we have $\div b_n = (\div b) * \zeta_n\geq\alpha(t)$,
thus we can take for compression constant of $X_n$ the value
$L_n=\exp \big( \|\alpha\|_{L^1(0,T)} \big)$, independently on $n$.
Applying Lemma~\ref{Lemma compact} yields the existence result, with
$L = \exp \big( \|\alpha\|_{L^1(0,T)} \big)$.

The continuity statement in \eqref{e:contflow} follows from the uniform convergence result of Lemma~\ref{Lemma compact},
and the boundedness in $\log L_\loc(\R^N)$ comes from the uniform in time estimate \eqref{eq:logpoint}.

It remains to prove \eqref{e:semigroupplus}. Note that the composition in this formula is well-defined almost everywhere
in $x$ because of \eqref{eq:absmeas} (take $B$ of zero measure).
We still use the approximation of $b$ by a sequence $b_n$ of smooth vector fields, in such a way that $X_n(s,t,\cdot)$
converges towards $X(s,t,\cdot)$ locally in measure in $\R^N$,
uniformly in $(s,t) \in D_T$. Clearly \eqref{e:semigroupplus} holds for the approximating flows $X_n$. In order to pass to the limit we show that 
\begin{equation}\label{e:claimconvmeas}
X_n \big( s,\tau, X_n(\tau,t,\cdot) \big) \to X \big( s,\tau, X(\tau,t,\cdot) \big)
\qquad \text{locally in measure in $\R^N$.}
\end{equation}
Let us fix $\eta >0$, $\gamma>0$ and $r>0$.
According to Lemma ~\ref{l:superlevels} we can find $\lambda\geq r$
depending on $\eta$ and $r$ in such a way that for all $s$ and $t$
$$
\Leb^N (B_r \setminus G_\lambda^{s,t}) \leq \eta \qquad
\text{ and } \qquad \Leb^N (B_r \setminus G^{s,t,n}_\lambda) \leq \eta \,,
$$ 
where we denote by $G_\lambda^{s,t}$ and $G^{s,t,n}_\lambda$ the sublevels at fixed time of $X$ and $X_n$ respectively,
i.e. $G_\lambda^{s,t}=\{x\in\R^N \, : \, |X(s,t,x)|\leq\lambda\}$
(the inequality \eqref{eq:estsuperlevels} is valid for these sublevels, just use \eqref{eq:logpoint}
instead of \eqref{e:loggrowth}).

Since $X_n(\tau,t,\cdot) \to X(\tau, t,\cdot)$ locally in measure and uniformly in $\tau,t$, we can find $n_1$
such that for all $n\geq n_1$, and all $\tau,t$,
$$ \Leb^N \big(B_\lambda\backslash S_1^{n,\tau,t}\big)\leq\eta \,,
	\qquad\mbox{with }S_1^{n,\tau,t}=\big\{x\in B_\lambda \, : \, |X_n(\tau,t,x)-X(\tau,t,x)|\leq\gamma/4\big\} \,.
$$
Then we have
\begin{equation}\label{e:estconv2}
\begin{split}
\Leb^N \Big( B_r \cap \big\{ \big| X_n\big(s,\tau,X_n(\tau,t,\cdot)\big) - X\big(s,\tau,X(\tau,t,\cdot)\big) \big| > \gamma \big \}\Big) \\
\leq \Leb^N \Big( B_r\cap G^{s,t,n}_\lambda \cap \big\{ \big| X_n\big(s,\tau,X_n(\tau,t,\cdot)\big) -  X\big(s,\tau,X_n(\tau,t,\cdot)\big) \big| > \gamma/4 \big \}\Big) \mkern 50mu\\
+ \Leb^N \Big( B_r\cap G^{s,t}_\lambda\cap G^{s,t,n}_\lambda\cap \Big\{ \big|X\big(s,\tau,X_n(\tau,t,\cdot)\big)  - X\big(s,\tau,X(\tau,t,\cdot)\big) \big| > \frac{3}{4}\gamma \Big\}\Big)\\
+\Leb^N (B_r \setminus G^{s,t}_\lambda)+\Leb^N (B_r \setminus G^{s,t,n}_\lambda).
\end{split}
\end{equation}
The first term of the right-hand side is bounded by $L_n\Leb^N \big(B_\lambda\backslash S_1^{n,s,\tau}\big)\leq L_n\eta$ (for $n\geq n_1$),
and the two last terms are bounded by $\eta$.
Now, according to Lusin's theorem we can find $\hat X$ such that $\hat X (s,\tau,\cdot) \in C(\overline{B_\lambda})$ and
\begin{equation}
	\Leb^N \Big( \overline{B_\lambda} \cap \big\{ X (s,\tau ,\cdot) \not = \hat X (s,\tau ,\cdot) \big\} \Big) \leq \eta.
	\label{eq:lusin}
\end{equation}
Then, since $\hat X(s,\tau ,\cdot)$ is uniformly continuous, there exists $\alpha>0$ such that
\begin{equation}
	|y-x|\leq\alpha\quad\mbox{implies that}\quad |\hat X(s,\tau ,y)-\hat X(s,\tau ,x)|\leq\frac{3}{4}\gamma.
	\label{eq:unifcontX}
\end{equation}
Then there exists $n_2$ such that for all $n\geq n_2$
$$ \Leb^N \big(B_r\backslash S_2^{n,\tau,t}\big)\leq\eta\,,
	\qquad\mbox{with }S_2^{n,\tau,t}=\big\{x\in B_r \, : \, |X_n(\tau,t,x)-X(\tau,t,x)|\leq\alpha\big\} \,.
$$
We can estimate the second term in the right-hand side of \eqref{e:estconv2} as
\begin{equation*}
\begin{split}
\Leb^N \Big( B_r\cap G^{s,t}_\lambda\cap G^{s,t,n}_\lambda\cap \Big\{ \big|X\big(s,\tau,X_n(\tau,t,\cdot)\big)  - X\big(s,\tau,X(\tau,t,\cdot)\big) \big| > \frac{3}{4}\gamma \Big\}\Big)\\
\leq\Leb^N \Big( B_r\cap G^{s,t}_\lambda\cap G^{s,t,n}_\lambda\cap \Big\{ \big|\hat X\big(s,\tau,X_n(\tau,t,\cdot)\big)  - \hat X\big(s,\tau,X(\tau,t,\cdot)\big) \big| > \frac{3}{4}\gamma \Big\}\Big)\\
+\Leb^N \Big( B_r\cap G^{s,t,n}_\lambda\cap \big\{ \hat X\big(s,\tau,X_n(\tau,t,\cdot)\big)  \not =  X\big(s,\tau,X_n(\tau,t,\cdot)\big) \big \}\Big)\\
+\Leb^N \Big( B_r\cap G^{s,t}_\lambda\cap \big\{ \hat X\big(s,\tau,X(\tau,t,\cdot)\big)  \not =  X\big(s,\tau,X(\tau,t,\cdot)\big) \big \}\Big).
\end{split}
\end{equation*}
Taking into account \eqref{eq:lusin}, the two last terms are bounded respectively by $L_n\eta$
and $L\eta$. Because of \eqref{eq:unifcontX}, the first term on the right-hand side is bounded by $\Leb^N \big(B_r\backslash S_2^{n,\tau,t}\big)$.
We conclude that for $n\geq \max\{n_1,n_2\}$
$$ \Leb^N \Big( B_r \cap \big\{ \big| X_n\big(s,\tau,X_n(\tau,t,\cdot)\big) - X\big(s,\tau,X(\tau,t,\cdot)\big) \big| > \gamma \big \}\Big)
	\leq C\eta \,, $$
which is exactly the desired convergence in measure in \eqref{e:claimconvmeas}.
Together with the local convergence in measure in $\R^N$ of $X_n(s,t,\cdot)$ towards $X(s,t,\cdot)$, this proves \eqref{e:semigroupplus}.
\end{proof}

\begin{remark}\label{r:abstrapprox} In the previous theorem we assume the condition \eqref{e:ipodiv} in order to be sure to have a smooth approximating sequence with equi-bounded compression constants.
An assumption on the divergence is the easiest (and the most explicit) way to get such equi-bound, due to the fact that the regularization by convolution preserves the $L^\infty$ bounds on the divergence.
However, according to Lemma~\ref{Lemma compact}, a sharp assumption for the existence of a regular Lagrangian flow would be the existence of a smooth approximating sequence with equi-bounds on the growth assumptions in (R1) and (R3),
that satisfies (R2) with fixed singular integral operators $S_{jk}$ (or at least with singular integral operators satisfying uniform bounds)
and with equi-integrable functions $g_{jk}$, and for which the compression constants are equi-bounded.
Observe that (by a diagonal argument) the class of vector fields that enjoy this approximation property is closed with respect to such convergence with bounds.\end{remark}

Summing up all the previous results, we have existence, uniqueness and stability of the (forward) regular Lagrangian flow
starting at time $t \in [0,T)$, associated to a vector field $b$ satisfying assumptions (R1), (R2) and (R3) and for which \eqref{e:ipodiv} holds.
In the following corollary we deal with the case when two-sided bounds on the divergence are assumed,
and thus we can define forward and backward flows, that also satisfy the usual group property.

\begin{corol}[Forward-backward flow]\label{c:finalwell}
Let $b$ be a vector field satisfying assumptions (R1), (R2) and (R3) and assume that \eqref{e:ipodiv} is replaced by the stronger condition
\begin{equation}\label{e:ipodiv3}
\div b \in L^1 \big( (0,T) ; L^\infty( \R^N ) \big) \,.
\end{equation}
Then, for every $t \in [0,T]$, there exists a unique forward and backward (i.e., satisfying the conditions of Definition~\ref{d:rlf} for $s\leq t$)
regular Lagrangian flow associated to $b$ starting at time $t$.
Such flow $X$, as a function of both $s$ and $t$, satisfies
\begin{equation}\label{e:fbflow}
X \in C\big( [0,T]_s \times [0,T]_t ; L^0_\loc(\R^N_x) \big) \cap
{\mathcal B}\big( [0,T]_s \times [0,T]_t ; \log L_\loc(\R^N_x) \big) \,.
\end{equation}
Moreover, for every $s$, $t$ and $\tau$ in $[0,T]$ there holds
\begin{equation}\label{e:semigroup}
X \big( s,\tau, X(\tau,t,x) \big) = X (s,t,x)
\qquad \text{ for $\Leb^N$-a.e.~$x \in \R^N$,}
\end{equation}
and in particular for all $s$, $t \in [0,T]$
\begin{equation}\label{e:semiinverse}
X \big( t,s,X(s,t,x) \big) = x
\qquad \text{ for $\Leb^N$-a.e.~$x \in \R^N$.}
\end{equation}
\end{corol}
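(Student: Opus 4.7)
The plan is to reduce the construction of the backward flow to a forward flow for a time-reversed vector field, and then to obtain the joint regularity in $(s,t)$ together with the composition identities by smooth approximation.

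First, since $\|\div b(t,\cdot)\|_{L^\infty}$ is an $L^1$ function of $t$, the hypothesis \eqref{e:ipodiv3} immediately implies the one-sided bound \eqref{e:ipodiv}, so Theorems~\ref{t:existrlf} and \ref{t:unirlf} give, for each $t\in[0,T)$, a unique forward flow $X^+(s,t,\cdot)$ for $s\in[t,T]$. For the backward piece I would introduce the time-reversed vector field $\bar b(\sigma,x) = -b(T-\sigma,x)$, which still satisfies (R1), (R2), (R3) and \eqref{e:ipodiv3}. Applying the same theorems to $\bar b$ produces a unique forward flow $\bar X(\sigma,\tau,\cdot)$, and I would set $X^-(s,t,x) := \bar X(T-s,T-t,x)$ for $s\leq t$. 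Gluing $X^+$ and $X^-$ along the diagonal $s=t$ gives a candidate $X(s,t,x)$ on $[0,T]^2$.

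To obtain the semigroup identities and the joint continuity I would use the same mollification as in the proof of Theorem~\ref{t:existrlf}. Fix a positive radial convolution kernel $\zeta$ with $\spt\zeta\subset B_1$ and set $b_n = b *_x \zeta_n$. The condition \eqref{e:ipodiv3} is preserved by spatial mollification with $\|\div b_n(t,\cdot)\|_{L^\infty}\leq\|\div b(t,\cdot)\|_{L^\infty}$, so the classical smooth flows $X_n(s,t,x)$ of $b_n$ are defined for all $(s,t)\in[0,T]^2$ and their compression constants in \emph{both} time directions are equi-bounded by $\exp(\|\div b\|_{L^1(L^\infty)})$. The stability Theorem~\ref{t:stabrlf}, applied separately to the forward problem and (via $\bar b_n\to\bar b$) to the backward problem, then yields $X_n\to X$ locally in measure, uniformly in $(s,t)\in[0,T]^2$.

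Because each $X_n$ is smooth, it satisfies exactly $X_n(s,\tau,X_n(\tau,t,x))=X_n(s,t,x)$ for \emph{every} ordering of $s,\tau,t$ in $[0,T]$, and in particular the inverse identity. The uniform convergence in $(s,t)$ with values in $L^0_\loc$ promotes the continuity in $s$ already obtained from Theorem~\ref{t:existrlf} to the joint continuity \eqref{e:fbflow}, while applying the log-growth estimate \eqref{eq:logpoint} to both forward and backward approximants gives the $\log L_\loc$ bound uniformly in $(s,t)$.

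The main obstacle is the passage to the limit in the composition $X_n(s,\tau,X_n(\tau,t,\cdot))$, since composition does not commute with mere convergence in measure. This is precisely the argument already carried out at the end of the proof of Theorem~\ref{t:existrlf}: one localizes the inner argument inside a large sublevel provided by Lemma~\ref{l:superlevels}, approximates $X(s,\tau,\cdot)$ on that sublevel by a uniformly continuous map via Lusin's theorem, and uses the equi-bounded compression constants to control the measure of the exceptional sets. Once this is done for all orderings of $s,\tau,t$, \eqref{e:semigroup} holds on $[0,T]^2$, and \eqref{e:semiinverse} is the particular case $\tau=s$, completing the proof.
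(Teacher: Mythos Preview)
Your proposal is correct and follows essentially the same route as the paper: reduce the backward flow to the forward flow for the time-reversed field $\bar b(\sigma,x)=-b(T-\sigma,x)$, mollify in space so that the two-sided bound on $\div b$ gives equi-bounded compression constants in both time directions, and then reuse verbatim the Lusin/sublevel argument from the end of Theorem~\ref{t:existrlf} to pass to the limit in the composition for all orderings of $s,\tau,t$. The paper's own proof is just a more compressed version of exactly this.
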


\begin{proof}The two-sided bound \eqref{e:ipodiv3} enables to apply Theorem~\ref{t:existrlf}
to $b$ and to $\overline b(t,x)=-b(T-t,x)$.
This gives the existence and uniqueness of the forward and backward flows, with a control
on the compression constants $L\leq\exp\|\div b\|_{L^1 ( (0,T) ; L^\infty( \R^N ) )}$.
Condition \eqref{e:fbflow} follows.

For proving \eqref{e:semigroup}, we have to consider an approximation of $b$ by a sequence $b_n$ of smooth vector fields, in such a way that $X_n(s,t,\cdot)$
converges towards $X(s,t,\cdot)$ locally in measure in $\R^N$,
uniformly in $s$, $t \in [0,T]$, and with uniform bounds on the compression.
This can be done by the same approximation as in the proof of Theorem~\ref{t:existrlf}.
In this way we have a simultaneous approximation of the forward and backward flows.
Then it is straightforward to check that the proof of Theorem~\ref{t:existrlf} works as well
for all values of $s,\tau,t$ in $[0,T]$.
\end{proof}

We finally would like to define the Jacobian of the flow, which is by definition a bounded map
$JX(s,t,\cdot)\in L^\infty(\R^N)$ satisfying for all $s$, $t \in [0,T]$ the equality
\begin{equation}\label{e:jacsmooth}
\int_{\R^N} f(y) \, dy = \int_{\R^N} f \big( X(s,t,x) \big) JX (s,t,x) \, dx
\qquad \text{ for every $f \in L^1(\R^N)$.}
\end{equation}

\begin{propos}[Jacobian of the flow]\label{p:jacob}
Consider a vector field $b$ satisfying assumptions (R1), (R2) and (R3) and such that
\begin{equation}\label{e:ipodiv4}
\div b \in L^1 \big( (0,T) ; L^\infty(\R^N) \big) \,.
\end{equation}
Then there exists a unique Jacobian $JX$ satisfying \eqref{e:jacsmooth}.
Moreover,
\begin{equation}
	JX \in C \big( [0,T]_s \times [0,T]_t ; L^\infty(\R^N)-w* \big)
	\cap C \big( [0,T]_s \times [0,T]_t ; L^1_\loc(\R^N) \big),
	\label{eq:regJ}
\end{equation}
and $JX\geq 0$.
\end{propos}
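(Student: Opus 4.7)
The plan is first to \emph{define} $JX$ via the Radon--Nikodym theorem and only afterwards to verify \eqref{e:jacsmooth}. By the proof of Corollary~\ref{c:finalwell}, both the forward and the backward flow have compression constants bounded by $L := \exp\bigl(\|\div b\|_{L^1((0,T);L^\infty(\R^N))}\bigr)$, and in particular $X(t,s,\cdot)_\#\Leb^N \leq L\,\Leb^N$. I define $JX(s,t,\cdot)$ to be the Radon--Nikodym derivative $d\bigl(X(t,s,\cdot)_\#\Leb^N\bigr)/d\Leb^N$, which is then automatically nonnegative and lies in $L^\infty(\R^N)$ with norm at most $L$, uniformly in $(s,t)$.

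Formula \eqref{e:jacsmooth} then follows immediately from the inversion identity \eqref{e:semiinverse}: for $f \in L^1(\R^N)$,
\begin{equation*}
\int_{\R^N} f(X(s,t,x))\, JX(s,t,x)\, dx
= \int_{\R^N} f(X(s,t,x))\, d\bigl(X(t,s,\cdot)_\# \Leb^N\bigr)(x)
= \int_{\R^N} f\bigl(X(s,t, X(t,s,y))\bigr)\, dy
= \int_{\R^N} f(y)\, dy.
\end{equation*}
For uniqueness, if $\widetilde{JX}$ is another $L^\infty$ function satisfying \eqref{e:jacsmooth}, then $X(s,t,\cdot)_\#(\widetilde{JX}\,\Leb^N) = \Leb^N$; pushing by $X(t,s,\cdot)$ and using that $X(t,s,\cdot)\circ X(s,t,\cdot)$ is the identity $\Leb^N$-a.e.\ (and therefore preserves absolutely continuous measures) yields $\widetilde{JX}\,\Leb^N = X(t,s,\cdot)_\#\Leb^N = JX\,\Leb^N$.

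The weak-$*$ continuity in $L^\infty$ is easy: testing against $\varphi \in C_c(\R^N)$, the same push-forward calculation gives
\begin{equation*}
\int_{\R^N} \varphi(x)\, JX(s,t,x)\, dx = \int_{\R^N} \varphi(X(t,s,y))\, dy,
\end{equation*}
and the right-hand side is jointly continuous in $(s,t)$ by \eqref{e:fbflow}, Remark~\ref{r:Xsmallinfinity} (to handle the tails in $y$), and dominated convergence. The uniform $L^\infty$-bound on $JX$ combined with density of $C_c$ in $L^1$ then extends this to weak-$*$ continuity against every $L^1$ test function.

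The main obstacle is the strong $L^1_\loc$ continuity. My plan is to establish the explicit exponential representation
\begin{equation*}
JX(s,t,x) = \exp\!\left(\int_t^s \div b\bigl(\tau, X(\tau,t,x)\bigr)\, d\tau\right)
\qquad\text{for a.e.\ }x,
\end{equation*}
and then derive $L^1_\loc$-continuity from continuity of the exponent. For the formula itself, I would approximate $b$ by smooth $b_n = b * \zeta_n$ (convolution in $x$) as in the proof of Theorem~\ref{t:existrlf}; the classical Jacobians $JX_n(s,t,x) = \exp\bigl(\int_t^s \div b_n(\tau, X_n(\tau,t,x))\,d\tau\bigr)$ are uniformly bounded by $L$, satisfy \eqref{e:jacsmooth} for $X_n$, and (passing the uniqueness argument above to the limit) must converge to the $JX$ just constructed. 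The delicate point is the convergence of the exponent: one uses Lusin's theorem to approximate $\div b(\tau,\cdot)$ by continuous functions off a small set, combined with the local-in-measure convergence $X_n \to X$ uniform in the time parameters (Lemma~\ref{Lemma compact}), to obtain $\div b_n(\tau,X_n(\tau,t,\cdot)) \to \div b(\tau,X(\tau,t,\cdot))$ locally in measure in $(\tau,x)$, then passes through $\exp$ using the uniform $L^\infty$-bound. Continuity of the exponent in $(s,t)$ with values in $L^1_\loc$ follows by a similar Lusin-based argument together with \eqref{e:fbflow}.
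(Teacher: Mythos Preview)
Your proof is correct and its overall architecture is close to the paper's, but you take a genuinely different route for the \emph{existence} part.  The paper constructs $JX$ by approximation: it starts from the classical Jacobians $JX_n$ of smooth regularizations $b_n$, rewrites \eqref{e:jacsmooth} in dual form as $\int g(X_n(t,s,y))\,dy=\int g\,JX_n\,dx$, shows the left-hand side converges uniformly in $(s,t)$ (via an equi-integrability/tail argument), and \emph{defines} $JX(s,t,\cdot)$ as the resulting weak-$*$ limit.  You instead define $JX(s,t,\cdot)$ directly as the Radon--Nikodym density of $X(t,s,\cdot)_\#\Leb^N$ with respect to $\Leb^N$, which is available because the backward flow has bounded compression; the change-of-variables formula and weak-$*$ continuity then follow immediately from \eqref{e:semiinverse} and the dual identity $\int\varphi\,JX\,dx=\int\varphi(X(t,s,y))\,dy$.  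This is cleaner and avoids approximation at the existence stage.

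For the strong $L^1_\loc$ continuity both proofs are essentially the same: one passes through the explicit exponential formula $JX=\exp\int_t^s\div b(\tau,X(\tau,t,x))\,d\tau$ obtained by smooth approximation.  Your identification step (``passing the uniqueness argument to the limit'') is a bit terse; the cleanest way to close it is exactly what you have available: you already know $JX_n\rightharpoonup JX$ in $L^\infty$-$w*$ (by the dual identity applied to $X_n$ and $X$ and the convergence $\varphi(X_n(t,s,\cdot))\to\varphi(X(t,s,\cdot))$ in $L^1$), while the Lusin-based argument gives $JX_n\to\exp(\cdots)$ in $L^1_\loc$; the two limits must agree, yielding the formula and hence the strong continuity.
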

\begin{proof} {\sc Step 1. Uniqueness.} Assume there are two Jacobians
$\mathcal J_{s,t}$ and $\mathcal J'_{s,t}$. Then
\begin{equation}\label{e:JJ2}
\int_{\R^N} f \big( X(s,t,x) \big) \mathcal J_{s,t} (x) \, dx = 
\int_{\R^N} f \big( X(s,t,x) \big) \mathcal J'_{s,t} (x) \, dx
\qquad \text{ for every $f \in L^1(\R^N)$.}
\end{equation}
Given $g \in L^1(\R^N)$, let us take $f(y) = g \big( X(t,s,y) \big)$.
Since the forward and backward flows have bounded compression, we have $f \in L^1(\R^N)$.
Thus we can substitute in \eqref{e:JJ2}, and with \eqref{e:semiinverse} we obtain
$$
\int_{\R^N} g(x) \, \mathcal J_{s,t} (x) \, dx = 
\int_{\R^N} g(x) \, \mathcal J'_{s,t} (x) \, dx
\qquad \text{ for every $g \in L^1(\R^N)$.}
$$
This implies that $\mathcal J_{s,t}=\mathcal J'_{s,t}$ a.e. in $\R^N$.

\medskip

{\sc Step 2. Existence.}
We consider as in Theorem~\ref{t:existrlf} and Corollary~\ref{c:finalwell} a smooth approximation
$b_n$ of $b$ with uniform bounds.
For every $n$ we have a regular Lagrangian flow $X_n$ associated to $b_n$, and its Jacobian $JX_n$.
The a priori bound $\|JX_n(s,t,\cdot)\|_{L^\infty}\leq \exp\|\div b_n\|_{L^1(L^\infty)}\leq \Lambda$
holds and since $X_n$ is a classical flow we have
\begin{equation}\label{e:JJ}
\int_{\R^N} f(y) \, dy = \int_{\R^N} f \big( X_n(s,t,x) \big) JX_n(s,t,x) \, dx
\qquad \text{ for every $f \in L^1(\R^N)$.}
\end{equation}
Next, we claim that for all $f\in L^1(\R^N)$,
\begin{equation}
	f \big(X_n(s,t,\cdot) \big) \mbox{ is equi-integrable in }L^1(\R^N)\mbox{ with respect to }n,s,t.
	\label{eq:equifX}
\end{equation}
By density, it is enough to prove this for $f\in C_c(\R^N)$. In this case, we just have to prove that
$$
	\sup_{s,t,n}\int_{|x|>r}\left|f(X_n(s,t,x))\right|\,dx\rightarrow 0 \qquad\mbox{as }r\rightarrow \infty.
$$
Since $f$ has support in a ball $B_\lambda$, it amounts to prove that for $\lambda>0$,
$$ \sup_{s,t,n}\Leb^N\left(\left\{x\in\R^N \, : \, |x|>r\mbox{ and }|X_n(s,t,x)|\leq \lambda\right\}\right) \rightarrow 0
	\qquad\mbox{as }r\rightarrow \infty ,
$$
which holds true according to Remark~\ref{r:Xsmallinfinity}.
Thus \eqref{eq:equifX} is proved. Then we claim that for all $f\in L^1(\R^N)$
\begin{equation}
	f \big( X_n(s,t,\cdot) \big) \rightarrow f \big( X(s,t,\cdot) \big) 
	\quad \mbox{ in }L^1(\R^N)\mbox{ uniformly in }s,t,
	\label{eq:convunif}
\end{equation}
as $n\rightarrow\infty$. Again by density, it is enough to prove it for $f\in C_c(\R^N)$,
and this follows from \eqref{eq:equifX} and the fact that $f \big( X_n(s,t,\cdot) \big)\rightarrow f \big( X(s,t,\cdot) \big)$
locally in measure and uniformly in $s,t$. Thus \eqref{eq:convunif} holds.
Next, taking as above $f(y)=g \big(X(t,s,y) \big)$ yields that \eqref{e:jacsmooth} is equivalent to
finding $JX(s,t,\cdot)\in L^\infty(\R^N)$ such that
\begin{equation}
	\int_{\R^N} g\bigl(X(t,s,y)\bigr) \, dy = \int_{\R^N} g(x) JX (s,t,x) \, dx
\qquad \text{ for every $g \in L^1(\R^N)$,}
	\label{eq:Jg}
\end{equation}
and similarly we can transform \eqref{e:JJ} into
\begin{equation}
	\int_{\R^N} g\bigl(X_n(t,s,y)\bigr) \, dy = \int_{\R^N} g(x) JX_n (s,t,x) \, dx
\qquad \text{ for every $g \in L^1(\R^N)$.}
	\label{eq:Jgn}
\end{equation}
But because of \eqref{eq:convunif}, we have
$g\big(X_n(t,s,\cdot)\big)\rightarrow g\big(X(t,s,\cdot)\big)$ in $L^1(\R^N)$ uniformly in $s,t$,
thus $g\big(X(t,s,\cdot)\big)\in C\big([0,T]_s\times[0,T]_t;L^1(\R^N)\big)$ and we deduce that
the right-hand side of \eqref{eq:Jgn} converges uniformly in $s,t$ to some continuous function,
the left-hand side of \eqref{eq:Jg}. This being true for all $g\in L^1(\R^N)$,
for all $s,t$ we conclude that $JX_n(s,t,\cdot)$ converges in $L^\infty-w*$ to some
function $JX(s,t,\cdot)\in L^\infty$ that satisfies \eqref{eq:Jg}. Finally, the continuity
of \eqref{eq:Jg} with respect to $s,t$ yields that
$JX\in C\big([0,T]_s\times[0,T]_t;L^\infty(\R^N)-w*\big)$. The nonnegativity of $JX_n$
implies also that $JX\geq 0$.

\medskip

{\sc Step 3. Strong continuity.} It is only now that we really use the assumption
\eqref{e:ipodiv4} on $\div b$, and not only the bounded compression.
The Jacobians associated to the smooth flows $X_n$ satisfy the ordinary differential equation
$$ \partial_s JX_n(s,t,x) = (\div b_n) (s,X_n(s,t,x)) \, JX_n(s,t,x) \,, $$
thus we can write
$$ JX_n(s,t,x)=\exp\int_t^s(\div b_n)\bigl(\tau,X_n(\tau,t,x)\bigr)\,d\tau \,. $$
We have
$$ \div b_n\longrightarrow \div b\quad\mbox{in }L^1_\loc \big( [0,T]\times\R^N \big) \,, $$
hence also locally in measure in $[0,T]\times\R^N$, and $X_n(\tau,t,x)\rightarrow X(\tau,t,x)$ locally in measure,
uniformly in $\tau,t$. Since $\div b_n$ is bounded in $L^1 \big( (0,T);L^\infty(\R^N) \big)$, we deduce that
$$ (\div b_n) \big( \tau,X_n(\tau,t,x)\big)\longrightarrow(\div b)\big(\tau,X(\tau,t,x)\big)\quad
	\mbox{in }L^1_\loc \big( [0,T]\times\R^N \big),\mbox{ uniformly in }t.
$$
Therefore
$$ \int_t^s(\div b_n)\bigl(\tau,X_n(\tau,t,x)\bigr)\,d\tau
	\longrightarrow \int_t^s(\div b)\bigl(\tau,X(\tau,t,x)\bigr)\,d\tau\quad
	\mbox{in }L^1_\loc(\R^N),\mbox{ uniformly in }s,t, $$
and
\begin{equation}
	JX_n(s,t,x)\rightarrow\exp\int_t^s(\div b)\bigl(\tau,X(\tau,t,x)\bigr)\,d\tau\quad
	\mbox{in }L^1_\loc(\R^N),\mbox{ uniformly in }s,t.
	\label{eq:convJ_n}
\end{equation}
We conclude that $JX$ is equal to the right-hand side of \eqref{eq:convJ_n}, that
$JX_n(s,t,x)\rightarrow JX(s,t,x)$ in $L^1_\loc(\R^N)$, uniformly in $s,t$, and that
$JX\in C \big( [0,T]_s \times [0,T]_t ; L^1_\loc(\R^N) \big)$.
\end{proof}

\section{Lagrangian solutions to the transport and continuity equations}\label{s:lagrangian}

In this final section we introduce the concept of Lagrangian solutions to the transport and continuity equations.
They are defined as superposition of the initial data with the regular Lagrangian flow of the ordinary differential equation.
The well-posedness results stated in the previous section yield
that such Lagrangian solutions are well-defined and stable.
These solutions are in particular solutions in the renormalized distributional sense.

Note that under our assumptions on the coefficient $b$
we do not know if a solution in the renormalized or distributional
sense is unique. It could in principle happen that there exist several distributional solutions,
only one of them being associated to the flow.
Introducing the notion of Lagrangian solution, we identify
(among the many possible distributional solutions) a unique stable semigroup of solutions. This will be relevant
for the applications in the forthcoming paper \cite{BC2}.

\medskip

We first consider the backward Cauchy problem for the transport equation with prescribed final data, that reads
\begin{equation}\label{e:transT}
\begin{cases}
\partial_t u + b \cdot \nabla u = 0\quad\mbox{in }(0,T)\times\R^N, \\
u(T,\cdot) = u^T \,.
\end{cases}
\end{equation}
\begin{definition}[Lagrangian solution to the transport equation]\label{d:transport} Assume that $b$ satisfies (R1), (R2), (R3), and
$\div b \geq \alpha(t)$ with $\alpha \in L^1(0,T)$.
If $u^T \in L^0(\R^N)$, we define the Lagrangian solution to \eqref{e:transT} by
\begin{equation}
	u(t,x) = u^T \big( X(T,t,x) \big) \,.
	\label{eq:deftransp}
\end{equation}
\end{definition}
According to Theorems~\ref{t:unirlf} and~\ref{t:existrlf}, there exists
a unique forward regular Lagrangian flow $X$ associated to $b$. The bounded compression
condition ensures that the function \eqref{eq:deftransp} will be modified only on a set
of measure zero if we change of representative of $u^T$, justifying the definition \eqref{eq:deftransp}.
This definition of course gives the classical solution in case of smooth data.
\begin{propos}\label{p:transport} Assume that $b$ satisfies (R1), (R2), (R3), and
$\div b \geq \alpha(t)$ with $\alpha \in L^1(0,T)$. Then the Lagrangian solution \eqref{eq:deftransp}
satisfies
\begin{itemize}
\item[(i)] For all $u^T \in L^0(\R^N)$, we have $u\in C\big([0,T];L^0_\loc(\R^N)\big)$;
\item[(ii)] For all $u^T\in L^q(\R^N)$ for some $1\leq q<\infty$, we have
$u\in C \big([0,T];L^q(\R^N)\big)$. For all $u^T\in L^\infty(\R^N)$, we have
$u\in  C\big([0,T];L^\infty(\R^N)-w*\big)\cap C\big([0,T];L^1_\loc(\R^N)\big)$;
\item[(iii)] If $\div b \in L^1_\loc \big( (0,T)\times\R^N \big)$, then for all $u^T \in L^0(\R^N)$
and $\beta\in C_b(\R)$,
$$
	\partial_t\bigl(\beta(u)\bigr)+b\cdot\nabla\bigl(\beta(u)\bigr)=0\quad
	\mbox{in }(0,T)\times\R^N,
$$
where we define $b\cdot \nabla v\equiv\div (bv) - v \, \div b$.
\end{itemize}
\end{propos}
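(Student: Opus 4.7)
The plan is to treat the three items separately, using the ansatz $u(t,x)=u^T(X(T,t,x))$ together with the flow $X$ furnished by Theorems~\ref{t:unirlf} and~\ref{t:existrlf}; the compression bound \eqref{e:comprfunct} is the main transfer tool between Eulerian and Lagrangian norms.

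For (i), I would essentially repeat the Lusin-plus-sublevels argument that appears in the semigroup step of the proof of Theorem~\ref{t:existrlf}. Fix $t_0\in[0,T]$ and $r,\gamma,\eta>0$. First apply Lemma~\ref{l:superlevels} to choose $\lambda$ large enough that $\Leb^N(B_r\setminus G_\lambda^{T,t})\leq\eta/4$ uniformly in $t$. Next use Lusin on $\overline B_\lambda$ to approximate $u^T$ by a uniformly continuous $\hat u^T$ off a set of measure at most $\eta/(4L)$, and push this smallness through bounded compression to control $\{u^T(X(T,t,\cdot))\neq\hat u^T(X(T,t,\cdot))\}\cap G_\lambda^{T,t}$. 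On $G_\lambda^{T,t}\cap G_\lambda^{T,t_0}$ both $X(T,t,x)$ and $X(T,t_0,x)$ live in $\overline B_\lambda$, so the uniform continuity of $\hat u^T$ converts the $L^0_\loc$-continuity of $X(T,\cdot,\cdot)$ in its second argument (which is part of \eqref{e:contflow}) into the desired local convergence in measure of $u(t,\cdot)$ to $u(t_0,\cdot)$.

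For (ii), the starting point is the uniform bound $\|u(t,\cdot)\|_{L^q}\leq L^{1/q}\|u^T\|_{L^q}$ obtained by taking $\varphi=|u^T|^q$ in \eqref{e:comprfunct} (with the trivial $L^\infty$ analogue). Since $u^T\mapsto u(t,\cdot)$ is linear with this uniform operator norm, $L^q$-continuity reduces by density to $u^T\in C_c(\R^N)$. For such data I would combine the local convergence in measure from (i), the uniform $L^\infty$ bound, and the tightness $\Leb^N(\{|X(T,t,\cdot)|\leq\lambda\}\setminus B_R)\to 0$ as $R\to\infty$ uniformly in $t$ provided by Remark~\ref{r:Xsmallinfinity} applied to $\spt u^T\subset B_\lambda$; dominated convergence on $B_R$ plus a uniform $L^q$ tail outside then yields $L^q$-continuity. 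The $L^\infty$-weak-$*$ continuity is the continuity of $t\mapsto\int u(t,\cdot)\varphi$ for $\varphi\in L^1$, which by density in $L^1$ reduces to $\varphi\in C_c$ and follows from (i) plus dominated convergence; the $L^1_\loc$-continuity is then direct dominated convergence from (i) and the $L^\infty$ bound.

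For (iii), observe first that $\beta(u)=(\beta\circ u^T)(X(T,t,\cdot))$ is itself a Lagrangian solution with the bounded datum $\beta\circ u^T$, so it suffices to prove the identity $\partial_t u+\div(bu)-u\div b=0$ in $\D'((0,T)\times\R^N)$ under the extra hypothesis $u^T\in L^\infty$. I would use the smooth approximation $b_n=b*_x\zeta_n$ of the proof of Theorem~\ref{t:existrlf}, whose classical flows $X_n$ converge to $X$ locally in measure uniformly in $t$ (Lemma~\ref{Lemma compact}) and yield classical transport solutions $u_n(t,x)=u^T(X_n(T,t,x))$ satisfying $\partial_t u_n+\div(b_n u_n)-u_n\div b_n=0$. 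The main obstacle is the passage to the limit in the nonlinear products in $L^1_\loc$: splitting $b_n u_n-bu=(b_n-b)u_n+b(u_n-u)$ and $u_n\div b_n-u\div b=u_n(\div b_n-\div b)+(u_n-u)\div b$, the first summand in each case is handled by strong $L^p_\loc$/$L^1_\loc$-convergence together with the uniform $L^\infty$ bound on $u_n$, while the second requires a.e.\ convergence of $u_n$ along a subsequence (from (i)) combined with dominated convergence against $|b|\|u^T\|_\infty\in L^p_\loc$ and $|\div b|\|u^T\|_\infty\in L^1_\loc$ respectively; this is precisely where the additional assumption $\div b\in L^1_\loc$ of (iii) enters the argument.
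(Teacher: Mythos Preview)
Your proposal is correct and follows essentially the same strategy as the paper's proof: Lusin plus sublevels for (i), bounded compression together with Remark~\ref{r:Xsmallinfinity} for (ii), and passage to the limit from the smooth approximation $b_n=b*_x\zeta_n$ for (iii). The minor organizational differences---the paper argues equi-integrability of $(|u(t,\cdot)|^q)_t$ directly in (ii) rather than reducing by density to $u^T\in C_c$, and in (iii) it further reduces to $u^T\in C_c^\infty$ before the limit rather than handling the products $b(u_n-u)$ and $(\div b)(u_n-u)$ by dominated convergence as you do---do not change the substance of the argument.
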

\begin{proof} For (i), fix a ball $B_r$ and $\gamma>0$, $\eta>0$.
According to Lemma~\ref{l:superlevels}, we can find $\lambda>0$ such that $\Leb^N(\{x\in B_r \,:\, |X(T,t,x)|>\lambda\})\leq\eta$
for all $t\in[0,T]$. Then by Lusin's theorem, there exists $\hat u^T\in C(\overline B_\lambda)$ such that
$\Leb^N(\{y\in\overline B_\lambda \,:\, u^T(y)\not=\hat u^T(y)\})\leq\eta$. Since $\hat u^T$ is
uniformly continuous, there exists $\alpha>0$ such that $|\hat u^T(z)-\hat u^T(y)|\leq\gamma$
for all $y,z\in \overline B_\lambda$ such that $|z-y|\leq\alpha$. Then, given $t_0\in [0,T]$, there exists $\delta>0$
such that for $|t-t_0|\leq\delta$ we have $\Leb^N(\{x\in B_r \,:\, |X(T,t,x)-X(T,t_0,x)|>\alpha\})\leq\eta$.
Finally, for $|t-t_0|\leq\delta$ we have
\begin{equation*}\begin{array}{l}
	\dsp\hphantom{\leq} \Leb^N\left(\{x\in B_r \,:\, |u(t,x)-u(t_0,x)|>\gamma\}\right)\\
	\leq\Leb^N\left(\{x\in B_r \,:\, |X(T,t,x)|\leq\lambda,|X(T,t_0,x)|\leq\lambda,|u^T(X(T,t,x))-u^T(X(T,t_0,x))|>\gamma\}\right)+2\eta\\
	\leq\Leb^N\left(\{x\in B_r \,:\, |X(T,t,x)|\leq\lambda,|X(T,t_0,x)|\leq\lambda,|\hat u^T(X(T,t,x))-\hat u^T(X(T,t_0,x))|>\gamma\}\right)\\
	\dsp\mkern 600mu+2\eta+2L\eta\\
	\leq\Leb^N\left(\{x\in B_r \,:\, |X(T,t,x)-X(T,t_0,x)|>\alpha\}\right)+2\eta+2L\eta\\
	\leq 3\eta+2L\eta,
	\label{eq:estconttransp}
	\end{array}
\end{equation*}
which proves (i).

For (ii) and $q<\infty$, taking into account (i) we just have to prove that $(|u(t,\cdot)|^q)_{0\leq t\leq T}$ is equi-integrable.
This holds true by using Lemma~\ref{l:equiint}, the bounded compression property and Remark~\ref{r:Xsmallinfinity}.
The case $q=\infty$ is obvious.

For (iii), the statement involves only the function $\beta(u^T)$ (and not $\beta$ and $u^T$ separately).
Therefore we have to prove that for all $u^T\in L^\infty(\R^N)$,
\begin{equation}
	\partial_t \Bigl(u^T\big(X(T,t,x)\big)\Bigr)+b\cdot\nabla\Bigl(u^T\big(X(T,t,x)\big)\Bigr)=0\quad
	\mbox{in }(0,T)\times\R^N.
	\label{eq:transpbounded}
\end{equation}
By approximation it is enough to prove \eqref{eq:transpbounded} for $u^T\in C_c^\infty(\R^N)$.
This is obtained obviously by approximation of $b$ by a smooth sequence $b_n$ such that
$\div b_n\rightarrow\div b$ in $L^1_\loc\big((0,T)\times\R^N\big)$.
\end{proof}
\begin{propos}[Stability of the Lagrangian transport]\label{p:stabtransport} Let $b_n$, $b$ be vector fields satisfying assumptions
(R1), (R2), (R3), and $\div b_n\geq\alpha_n(t)$, $\div b\geq\alpha(t)$ for some $\alpha_n,\ \alpha\in L^1(0,T)$.
Assume that $b_n\rightarrow b$ in $L^1_\loc \big([0,T] \times \R^N\big)$, and that
\begin{itemize}
\item For some decomposition $b_n / (1+|x|) = \tilde b_{n,1} + \tilde b_{n,2}$ as in assumption (R1), we have that
$$
\| \tilde b_{n,1} \|_{L^1(L^1)} + \| \tilde b_{n,2} \|_{L^1(L^\infty)}
\qquad \text{ is equi-bounded in $n$;}
$$
\item The sequence $\|\alpha_n\|_{L^1(0,T)}$ is equi-bounded.
\end{itemize}
We consider the Lagrangian solutions $u_n(t,x)$ to the transport
equation with coefficient $b_n$ and final data $u_n^T\in L^0(\R^N)$, as well
as $u(t,x)$ associated to $b$ and $u^T\in L^0(\R^N)$. Then we have the following properties:
\begin{itemize}
\item[(i)] If $u_n^T\rightarrow u^T$ in $L^0_\loc(\R^N)$,
then $u_n(t,x)\rightarrow u(t,x)$ in $C\big([0,T];L^0_\loc(\R^N)\big)$;
\item[(ii)] If $u_n^T\rightarrow u^T$ in $L^q(\R^N)$ for some $1\leq q<\infty$,
then $u_n(t,x)\rightarrow u(t,x)$ in $C\big([0,T];L^q(\R^N)\big)$.
If $u_n^T\rightarrow u^T$ in $\big(L^\infty(\R^N)-w*\big)\cap L^1_\loc(\R^N)$, then
$u_n(t,x)\rightarrow u(t,x)$ in $C\big([0,T];L^\infty(\R^N)-w*\big)\cap C\big([0,T];L^1_\loc(\R^N)\big)$.
\end{itemize}
\end{propos}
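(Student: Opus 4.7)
The plan is to reduce both parts to the stability of the regular Lagrangian flow (Theorem~\ref{t:stabrlf}), which under our hypotheses gives $X_n(T,t,\cdot) \to X(T,t,\cdot)$ locally in measure, uniformly in $t \in [0,T]$. The uniform $L^1$-bound on $\alpha_n$ provides compression constants $L_n \leq \exp(\|\alpha_n\|_{L^1(0,T)})$ that are equi-bounded; this equi-compression will be the essential tool for transporting local-in-measure estimates through the composition $u_n^T \circ X_n$.

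For (i), fix $r>0$ and $\gamma, \eta >0$, and split
\begin{equation*}
u_n(t,x) - u(t,x) = \bigl[u_n^T(X_n(T,t,x)) - u^T(X_n(T,t,x))\bigr] + \bigl[u^T(X_n(T,t,x)) - u^T(X(T,t,x))\bigr].
\end{equation*}
By Lemma~\ref{l:superlevels}, choose $\lambda$ so that $\Leb^N(B_r \setminus G^{T,t,n}_\lambda) \leq \eta$ and $\Leb^N(B_r \setminus G^{T,t}_\lambda) \leq \eta$ uniformly in $n,t$. For the first bracket, bounded compression of $X_n$ yields
\begin{equation*}
\Leb^N\bigl(\{ x \in B_r \cap G^{T,t,n}_\lambda : |u_n^T(X_n(T,t,x)) - u^T(X_n(T,t,x))| > \gamma/2 \}\bigr) \leq L_n \,\Leb^N\bigl(B_\lambda \cap \{|u_n^T - u^T| > \gamma/2\}\bigr),
\end{equation*}
which tends to $0$ as $n \to \infty$ by the local-in-measure convergence $u_n^T \to u^T$ on $B_\lambda$. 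For the second bracket, apply Lusin's theorem to obtain $\hat u^T \in C(\overline{B_\lambda})$ agreeing with $u^T$ outside a set $E \subset B_\lambda$ of arbitrarily small measure; use uniform continuity of $\hat u^T$ to fix $\alpha>0$; invoke Theorem~\ref{t:stabrlf} to find $n_0$ such that $\Leb^N\bigl(\{x \in B_r : |X_n(T,t,x) - X(T,t,x)| > \alpha\}\bigr) \leq \eta$ for $n \geq n_0$, uniformly in $t$; and control the sets $\{X_n(T,t,\cdot) \in E\}$, $\{X(T,t,\cdot) \in E\}$ by bounded compression of $X_n$ and $X$. Summing the two contributions yields (i).

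For (ii) with $1 \leq q < \infty$, combining (i) with equi-integrability of $|u_n(t,\cdot)|^q$ uniformly in $n, t$ gives $L^q$ convergence uniformly in $t$. Since $u_n^T \to u^T$ in $L^q$, the family $(|u_n^T|^q)$ is equi-integrable in $L^1(\R^N)$; Lemma~\ref{l:equiint} then provides a uniform-in-$n$ decomposition $|u_n^T|^q = \phi_n^1 + \phi_n^2$ with $\|\phi_n^1\|_{L^1} \leq \eps$, $\spt \phi_n^2 \subset A_\eps$ of finite measure and $\|\phi_n^2\|_{L^2} \leq C_\eps$. Push-forward via bounded compression handles $\phi_n^1$, while Remark~\ref{r:Xsmallinfinity} localizes the support of $\phi_n^2 \circ X_n(T,t,\cdot)$ inside a fixed large ball, exactly as in the proof of Proposition~\ref{p:transport}(ii). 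For $q = \infty$, the uniform $L^\infty$ bound $\|u_n(t,\cdot)\|_\infty \leq \|u_n^T\|_\infty$ (with $\|u_n^T\|_\infty$ equi-bounded by $L^\infty$-$w*$ convergence) together with (i) yields $L^1_\loc$ convergence uniformly in $t$; the $L^\infty$-$w*$ convergence then follows by approximating test functions $\varphi \in L^1(\R^N)$ by compactly supported ones, applying $L^1_\loc$ convergence on the compact set, and controlling the tail by the uniform $L^\infty$ bound.

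The main obstacle is the double perturbation in (i): both the datum $u^T$ and the flow are changing with $n$, and every estimate must be uniform in $t \in [0,T]$. The resolution is sequential: first freeze a large ball $B_\lambda$ via Lemma~\ref{l:superlevels} to confine the relevant image points; then Lusin-approximate $u^T$ on that ball, turning a convergence-in-measure hypothesis into effective uniform continuity off a small set; and only then invoke the uniform-in-$t$ flow stability of Theorem~\ref{t:stabrlf}, using equi-compression to absorb bad sets under the composition.
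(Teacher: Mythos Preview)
Your proof is correct and follows essentially the same approach as the paper: both invoke Theorem~\ref{t:stabrlf} for the uniform-in-$t$ flow convergence, confine image points via Lemma~\ref{l:superlevels}, split $u_n(t,\cdot)-u(t,\cdot)$ into a datum-variation term (controlled by bounded compression) and a flow-variation term (controlled by Lusin plus uniform continuity), and then upgrade to $L^q$ via the equi-integrability characterization of Lemma~\ref{l:equiint} together with Remark~\ref{r:Xsmallinfinity}. Your treatment of the $q=\infty$ case is slightly more explicit than the paper's (which dismisses it as ``obvious''), but the substance is identical.
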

\begin{proof}Denote by $X_n$ and $X$ the respective regular Lagrangian flows of $b_n$ and $b$.
Then according to Theorem~\ref{t:existrlf} we can take for their compression constants
$L_n=\exp\|\alpha_n\|_{L^1(0,T)}$ and $L=\exp\|\alpha\|_{L^1(0,T)}$, which are equi-bounded.
Thus we can apply Theorem~\ref{t:stabrlf}, and $X_n$ converges to $X$ locally in measure
in $\R^N$, uniformly in $s,t$.

For (i), fix a ball $B_r$ and $\gamma>0$, $\eta>0$.
According to Lemma~\ref{l:superlevels}, we can find $\lambda>0$ such that $\Leb^N(\{x\in B_r \,:\, |X_n(T,t,x)|>\lambda\})\leq\eta$
for all $t\in[0,T]$ and all $n$. Then, there exists some $n_1$ such that for all $n\geq n_1$,
$\Leb^N(\{y\in\overline B_\lambda \,:\, |u_n^T(y)-u^T(y)|>\gamma/2\})\leq\eta$.
By Lusin's theorem, there exists $\hat u^T\in C(\overline B_\lambda)$ such that
$\Leb^N(\{y\in\overline B_\lambda \,:\, u^T(y)\not=\hat u^T(y)\})\leq\eta$. Since $\hat u^T$ is
uniformly continuous, there exists $\alpha>0$ such that $|\hat u^T(z)-\hat u^T(y)|\leq\gamma/2$
for all $y,z\in \overline B_\lambda$ such that $|z-y|\leq\alpha$. Then, there exists $n_2$
such that for $n\geq n_2$ we have $\Leb^N(\{x\in B_r \,:\, |X_n(T,t,x)-X(T,t,x)|>\alpha\})\leq\eta$
for all $t\in[0,T]$.
Finally, for $n\geq\max(n_1,n_2)$ and $t\in[0,T]$ we have
\begin{equation*}\begin{array}{l}
	\dsp\hphantom{\leq} \Leb^N\left(\{x\in B_r \,:\, |u_n(t,x)-u(t,x)|>\gamma\}\right)\\
	\leq\Leb^N\left(\{x\in B_r \,:\, |X_n(T,t,x)|\leq\lambda,|X(T,t,x)|\leq\lambda,|u_n^T(X_n(T,t,x))-u^T(X(T,t,x))|>\gamma\}\right)+2\eta\\
	\leq\Leb^N\left(\{x\in B_r \,:\, |X_n(T,t,x)|\leq\lambda,| u_n^T(X_n(T,t,x))-u^T(X_n(T,t,x))|>\gamma/2\}\right)\\
	\dsp+\Leb^N\left(\{x\in B_r \,:\, |X_n(T,t,x)|\leq\lambda,|X(T,t,x)|\leq\lambda,| u^T(X_n(T,t,x))-u^T(X(T,t,x))|>\gamma/2\}\right)+2\eta\\
	\leq\Leb^N\left(\{x\in B_r \,:\, |X_n(T,t,x)|\leq\lambda,|X(T,t,x)|\leq\lambda,| \hat u^T(X_n(T,t,x))-\hat u^T(X(T,t,x))|>\gamma/2\}\right)\\
	\dsp\hphantom{\leq}+2\eta+2L_n\eta+L\eta\\
	\leq\Leb^N\left(\{x\in B_r \,:\, |X_n(T,t,x)-X(T,t,x)|>\alpha\}\right)+2\eta+2L_n\eta+L\eta\\
	\leq 3\eta+2L_n\eta+L\eta,
	\end{array}
\end{equation*}
which proves (i).

For proving (ii), because of (i) we need only to prove the equi-integrability of $|u_n(t,\cdot)|^q$
for $n\in\N$ and $t\in[0,T]$. This holds true by using Lemma~\ref{l:equiint}, the uniformly bounded compression property and Remark~\ref{r:Xsmallinfinity}.
The case $q=\infty$ is obvious since the $L^\infty$ bound ensures the local equi-integrability.
\end{proof}

If in addition to the previous conditions we further assume $\div b \in L^1 \big( (0,T) ; L^\infty(\R^N) \big)$,
we have existence and uniqueness of the backward regular Lagrangian flow starting at time $t \in [0,T]$,
according to Corollary~\ref{c:finalwell}. This gives the possibility of defining the Lagrangian solution to the Cauchy problem
for the transport equation in which we prescribe the initial data $u(0,\cdot) = u^0$ instead of the final data, via the formula
$u(t,x) = u^0 \big( X(0,t,x) \big)$.
The same results as above, regarding continuity, renormalized equations and stability
(with bound on $\div b_n$ in $L^1 \big( (0,T) ; L^\infty(\R^N) \big)$)
hold also in this case.\\

In a similar fashion we can consider the backward Cauchy problem for the continuity equation with prescribed final data, that is
\begin{equation}\label{e:contT}
\begin{cases}
\partial_t u + \div (bu) = 0\quad\mbox{in }(0,T)\times\R^N, \\
u(T,\cdot) = u^T.
\end{cases}
\end{equation}
\begin{definition}[Lagrangian solution to the continuity equation]\label{d:transportcons}
Assume that $b$ satisfies (R1), (R2), (R3), and
$\div b \in L^1 \big( (0,T) ; L^\infty(\R^N) \big)$.
If $u^T \in L^0(\R^N)$, we define the Lagrangian solution to \eqref{e:contT} by
\begin{equation}
	u(t,x) = u^T \big( X(T,t,x) \big)JX (T,t,x) \,.
	\label{eq:deftranspcons}
\end{equation}
\end{definition}
According to Theorem~\ref{t:unirlf} and Corollary~\ref{c:finalwell}, there exists
a unique forward-backward regular Lagrangian flow $X$ associated to $b$.
With Proposition~\ref{p:jacob}, the Jacobian $JX$ exists, thus \eqref{eq:deftranspcons} is well-defined.
\begin{propos}\label{p:transportcons} Assume that $b$ satisfies (R1), (R2), (R3), and
$\div b \in L^1 \big( (0,T) ; L^\infty(\R^N) \big)$.
Then the Lagrangian solution \eqref{eq:deftranspcons} satisfies
\begin{itemize}
\item[(i)] For all $u^T \in L^0(\R^N)$, we have $u\in C\big([0,T];L^0_\loc(\R^N)\big)$;
\item[(ii)] For all $u^T\in L^q(\R^N)$ for some $1\leq q<\infty$, we have
$u\in C\big([0,T];L^q(\R^N)\big)$. For all $u^T\in L^\infty(\R^N)$, we have
$u\in  C\big([0,T];L^\infty(\R^N)-w*\big)\cap C\big([0,T];L^1_\loc(\R^N)\big)$;
\item[(iii)] For all $u^T \in L^0(\R^N)$
and $\beta\in C^1(\R)$ with $\beta$, $\beta'(z)(1+|z|)$ bounded,
\begin{equation}
	\partial_t\bigl(\beta(u)\bigr)+b\cdot\nabla\bigl(\beta(u)\bigr)
	+\beta'(u)u\,\div b=0\quad
	\mbox{in }(0,T)\times\R^N,
	\label{eq:rntranspcons}
\end{equation}
where we define $b\cdot \nabla v\equiv\div (bv) - v \, \div b$.
\end{itemize}
\end{propos}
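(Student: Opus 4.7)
The plan is to follow the scheme of Propositions~\ref{p:transport} and~\ref{p:stabtransport}, carrying along the extra Jacobian factor via Proposition~\ref{p:jacob}. Throughout I use that $\|JX(T,t,\cdot)\|_{L^\infty}\leq\Lambda:=\exp\|\div b\|_{L^1((0,T);L^\infty)}$, that $JX\geq 0$, and that $(s,t)\mapsto JX(s,t,\cdot)$ is continuous into $L^1_\loc(\R^N)$.

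For (i), I fix $B_r$, $\gamma>0$, $\eta>0$ and adapt the Lusin-based argument of Proposition~\ref{p:transport}(i). Lemma~\ref{l:superlevels} produces $\lambda$ with $\Leb^N(\{x\in B_r:|X(T,t,x)|>\lambda\})\leq\eta$ uniformly in $t$. Lusin's theorem gives $\hat u^T\in C(\overline{B_\lambda})$ equal to $u^T$ outside a set of measure $\leq\eta$; after composition with $X(T,t,\cdot)$, bounded compression contributes a further exceptional set of measure $\leq L\eta$. On the good set, $\hat u^T(X(T,t,\cdot))$ is uniformly bounded and, by uniform continuity of $\hat u^T$ together with the $L^0_\loc$ continuity of $X(T,\cdot,\cdot)$, converges in measure as $t\to t_0$. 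Finally $JX(T,t,\cdot)$ is $L^\infty$-bounded and converges in $L^1_\loc$, so the product converges in measure on bounded sets, giving continuity of $u(t,\cdot)$ into $L^0_\loc$.

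For (ii) with $1\leq q<\infty$, the change-of-variable identity \eqref{e:jacsmooth} combined with $JX\leq\Lambda$ yields
\[
\int_{\R^N}|u(t,x)|^q\,dx \leq \Lambda^{q-1}\int_{\R^N}|u^T(X(T,t,x))|^q JX(T,t,x)\,dx = \Lambda^{q-1}\|u^T\|_{L^q}^q,
\]
so the $L^q$ norm of $u(t,\cdot)$ is uniformly bounded in $t$. Equi-integrability of $\{|u(t,\cdot)|^q\}_t$ reduces, via the pointwise bound $|u(t,x)|^q\leq\Lambda^q|u^T(X(T,t,x))|^q$, to equi-integrability of $|u^T\circ X(T,t,\cdot)|^q$. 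This follows from Lemma~\ref{l:equiint} applied to $|u^T|^q$: decomposing $|u^T|^q=f_1+f_2$ with $\|f_1\|_{L^1}$ small and $f_2\in L^\infty$ compactly supported, the $f_1$ part is controlled uniformly via bounded compression, while the $f_2$ part is controlled via Remark~\ref{r:Xsmallinfinity} (for tightness) and by the $L^\infty$ bound on $f_2$ (for equi-integrability on compacts). Combined with (i) this gives $u\in C([0,T];L^q)$. The case $q=\infty$ uses $\|u(t,\cdot)\|_\infty\leq\|u^T\|_\infty\Lambda$ together with (i) to get $L^1_\loc$ continuity, and weak-$*$ continuity then follows by testing against functions in a dense subset of $L^1$.

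For (iii) I use a double approximation. First, it suffices to prove the statement for $u^T\in C^\infty_c(\R^N)$: given a.e.\ convergent approximations $u^T_n\to u^T$ with $u^T_n\in C^\infty_c$ (obtained by truncation followed by mollification), the corresponding Lagrangian solutions $u_n$ converge a.e.\ to $u$ (since bounded compression preserves a.e.\ convergence under composition with $X$), and the hypotheses on $\beta$ give uniform $L^\infty$ bounds on $\beta(u_n)$ and $\beta'(u_n)u_n$; dominated convergence then passes the distributional formulation of \eqref{eq:rntranspcons} to the limit, using $b\in L^1_\loc$ and $\div b\in L^1_\loc$. Second, for $u^T\in C^\infty_c$, I approximate $b$ by $b_m=b*_x\zeta_m$ smooth in space, with associated smooth flows $X_m$ and Jacobians $JX_m$; then $u_m(t,x):=u^T(X_m(T,t,x))JX_m(T,t,x)$ is the classical smooth solution of the continuity equation with coefficient $b_m$, and \eqref{eq:rntranspcons} holds pointwise for $u_m,\ b_m$ by the chain rule. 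To pass to the limit $m\to\infty$ I invoke stability of the flow (giving $X_m\to X$ locally in measure, uniformly in $t$), strong $L^1_\loc$ convergence of $JX_m$ recorded in \eqref{eq:convJ_n}, uniform $L^\infty$ bounds on $u_m$, $b_m\to b$ in $L^1_\loc$, and $\div b_m\to\div b$ in $L^1_\loc$ (which follows from $\div b_m=(\div b)*_x\zeta_m$ and $\div b\in L^1((0,T);L^\infty(\R^N))$). The main obstacle is the nonlinear term $[\beta(u_m)-\beta'(u_m)u_m]\div b_m$, whose distributional convergence requires the uniform $L^\infty$ bound on the bracket (coming from the hypothesis $\beta'(z)(1+|z|)$ bounded) combined with strong $L^1_\loc$ convergence of $\div b_m$; it is precisely here that the stronger hypothesis $\div b\in L^1((0,T);L^\infty(\R^N))$ (as opposed to the one-sided bound of Proposition~\ref{p:transport}) plays its role.
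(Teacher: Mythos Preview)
Your proof is correct and follows essentially the same approach as the paper: parts (i) and (ii) are handled by carrying the bounded, $L^1_\loc$-continuous Jacobian factor through the Lusin/equi-integrability argument of Proposition~\ref{p:transport}, and part (iii) is proved by reducing to $u^T\in C^\infty_c$ and then approximating $b$ by spatial mollification so that $JX_m\to JX$ strongly. The only cosmetic difference is that the paper first reduces to $\beta\in C^1_c(\R)$ with $\beta(0)=0$ (which allows truncating $u^T$ to $L^\infty$ by a support argument), whereas you approximate $u^T$ directly and use the boundedness of $\beta$ and $\beta'(z)z$ to pass to the limit; both routes are equivalent.
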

\begin{proof} For (i), the proof is similar to that of Propositions~\ref{p:transport}(i), using \eqref{eq:regJ}.

For (ii) and $q<\infty$, taking into account (i) we just have to prove that $(|u(t,\cdot)|^q)_{0\leq t\leq T}$ is equi-integrable,
which follows from  Propositions~\ref{p:transport}(ii) and the fact that $JX$ is bounded.
The case $q=\infty$ is obvious.

For (iii), we only need to prove \eqref{eq:rntranspcons} for $\beta(0)=0$, $\beta\in C_c^1(\R)$.
Then it is enough to consider the case $u^T\in L^\infty(\R^N)$ (we can replace $u^T$
by $u^T\ind_{|u^T|\leq \lambda L}$ with $\lambda$ such that the support of $\beta$ lies
in the ball $B_\lambda$), and then by approximation it is also enough to prove \eqref{eq:rntranspcons} for $u^T\in C_c^\infty(\R^N)$.
This is obtained by approximation of $b$ by a smooth sequence $b_n$ such that
$\div b_n\rightarrow\div b$ in $L^1_\loc([0,T]\times\R^N)$, $\div b_n$ bounded in $L^1 \big( (0,T) ; L^\infty(\R^N) \big)$,
which ensures that $JX_n\rightarrow JX$ in $L^1_\loc(\R^N)$, uniformly in $s,t$.
\end{proof}
\begin{propos}[Stability of the Lagrangian continuity equation]\label{p:stabtransportcons} Let $b_n$, $b$ be vector fields satisfying assumptions
(R1), (R2), (R3), and $\div b_n,\div b \in L^1 \big( (0,T) ; L^\infty(\R^N) \big)$.
Assume that $b_n\rightarrow b$ in $L^1_\loc\big([0,T] \times \R^N\big)$, and that
\begin{itemize}
\item For some decomposition $b_n / (1+|x|) = \tilde b_{n,1} + \tilde b_{n,2}$ as in assumption (R1), we have that
$$
\| \tilde b_{n,1} \|_{L^1(L^1)} + \| \tilde b_{n,2} \|_{L^1(L^\infty)}
\qquad \text{ is equi-bounded in $n$;}
$$
\item The sequence $\|\div b_n\|_{L^1 ( L^\infty )}$ is equi-bounded;
\item $\div b_n\rightarrow\div b$ in $L^1_\loc \big( [0,T]\times\R^N \big)$.
\end{itemize}
We consider the Lagrangian solutions $u_n(t,x)$ to the continuity
equation with coefficient $b_n$ and final data $u_n^T\in L^0(\R^N)$, as well
as $u(t,x)$ associated to $b$ and $u^T\in L^0(\R^N)$. Then we have the following properties:
\begin{itemize}
\item[(i)] If $u_n^T\rightarrow u^T$ in $L^0_\loc(\R^N)$,
then $u_n(t,x)\rightarrow u(t,x)$ in $C\big([0,T];L^0_\loc(\R^N)\big)$;
\item[(ii)] If $u_n^T\rightarrow u^T$ in $L^q(\R^N)$ for some $1\leq q<\infty$,
then $u_n(t,x)\rightarrow u(t,x)$ in $C\big([0,T];L^q(\R^N)\big)$.
If $u_n^T\rightarrow u^T$ in $\big(L^\infty(\R^N)-w*\big) \cap L^1_\loc(\R^N)$, then
$u_n(t,x)\rightarrow u(t,x)$ in $C\big([0,T];L^\infty(\R^N)-w*\big)\cap C\big([0,T];L^1_\loc(\R^N)\big)$.
\end{itemize}
\end{propos}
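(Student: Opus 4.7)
The plan is to mirror the strategy of Proposition~\ref{p:stabtransport}, introducing as the new ingredient the stability of the Jacobians $JX_n$, and then recombining through the definition $u_n(t,x)=u_n^T(X_n(T,t,x))\,JX_n(T,t,x)$. First, Corollary~\ref{c:finalwell} ensures that each $b_n$ admits a unique forward--backward regular Lagrangian flow $X_n$ with compression constant bounded by $\exp\|\div b_n\|_{L^1((0,T);L^\infty)}$, hence equi-bounded in $n$ by hypothesis. Theorem~\ref{t:stabrlf} then yields $X_n(s,t,\cdot)\to X(s,t,\cdot)$ locally in measure in $\R^N$, uniformly in $(s,t)\in[0,T]^2$.

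The central step is the convergence
\begin{equation}
JX_n(s,t,\cdot)\longrightarrow JX(s,t,\cdot)\qquad\text{in }L^1_\loc(\R^N),\quad\text{uniformly in }s,t\in[0,T].
\label{eq:planjacobconv}
\end{equation}
For this, the representation $JX_n(s,t,x)=\exp\int_t^s(\div b_n)(\tau,X_n(\tau,t,x))\,d\tau$, established in Step~3 of the proof of Proposition~\ref{p:jacob}, applies verbatim to each $b_n$, since $b_n$ itself satisfies (R1)--(R3) and $\div b_n\in L^1(L^\infty)$. The combination of $\div b_n\to\div b$ in $L^1_\loc$, the equi-boundedness of $\|\div b_n\|_{L^1(L^\infty)}$, and the uniform (in $s,t$) convergence in measure of the flows produces
$$(\div b_n)(\tau,X_n(\tau,t,\cdot))\longrightarrow(\div b)(\tau,X(\tau,t,\cdot))\qquad\text{in }L^1_\loc,\text{ uniformly in }t,$$
exactly as in the cited proof (convergence in measure plus uniform pointwise bound $\Rightarrow$ $L^1_\loc$ convergence). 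Integrating in $\tau\in[t,s]$ and applying $\exp$, which is Lipschitz on the equi-bounded range of the primitives, yields \eqref{eq:planjacobconv}.

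For (i), given $B_r$, $\gamma>0$ and $\eta>0$, I would decompose
$$u_n-u=\big[u_n^T(X_n(T,t,\cdot))-u^T(X(T,t,\cdot))\big]\,JX_n(T,t,\cdot)+u^T(X(T,t,\cdot))\,\big[JX_n(T,t,\cdot)-JX(T,t,\cdot)\big].$$
The first bracket tends to zero locally in measure uniformly in $t$ by Proposition~\ref{p:stabtransport}(i) (whose hypotheses are satisfied with $\alpha_n(t)=-\|\div b_n(t,\cdot)\|_{L^\infty}$), and it is multiplied by the equi-bounded sequence $JX_n\in L^\infty$; the second summand is a fixed $L^0_\loc$ function times a sequence converging to zero in $L^1_\loc$ uniformly in $t$, hence negligible in measure on $B_r$. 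This gives the required convergence in $C([0,T];L^0_\loc(\R^N))$.

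For (ii) with $q<\infty$, given (i) it suffices to prove equi-integrability of $|u_n(t,\cdot)|^q$ in $(n,t)$, which follows from the uniform $L^\infty$ bound on $JX_n$, the $L^q$-equi-integrability of $u_n^T$, the uniformly bounded compression, and Remark~\ref{r:Xsmallinfinity}, via Lemma~\ref{l:equiint}. For the $L^\infty$ case, testing against $\varphi\in C_c(\R^N)$ and using the defining property of $JX_n$ together with the semigroup identity \eqref{e:semigroup} rewrites $\int u_n(t,x)\varphi(x)\,dx=\int u_n^T(y)\varphi(X_n(t,T,y))\,dy$; then $\varphi(X_n(t,T,\cdot))\to\varphi(X(t,T,\cdot))$ strongly in $L^1(\R^N)$ uniformly in $t$ (exactly as in the argument leading to \eqref{eq:convunif}), and combining this with the weak-$*$ convergence of $u_n^T$ yields the weak-$*$ convergence of $u_n$, uniformly in $t$; the $L^1_\loc$ strong convergence comes from the $L^1_\loc$ convergence of $u_n^T$ paired with the uniform $L^\infty$ bound. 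The only genuine obstacle is \eqref{eq:planjacobconv}; the remainder is a systematic adaptation of Proposition~\ref{p:stabtransport} that keeps track of the multiplicative factor $JX_n$.
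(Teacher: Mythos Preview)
Your proposal is correct and follows essentially the same route as the paper: obtain equi-bounded compression from Corollary~\ref{c:finalwell}, deduce flow convergence from Theorem~\ref{t:stabrlf}, extract the Jacobian convergence \eqref{eq:planjacobconv} from Step~3 of Proposition~\ref{p:jacob}, and then feed this into the scheme of Proposition~\ref{p:stabtransport}. The only noteworthy deviation is your treatment of the $L^\infty$-$w*$ part of (ii): the paper simply observes that the uniform $L^\infty$ bound on $u_n$ upgrades the $L^1_\loc$ convergence (already obtained from (i)) to $L^\infty$-$w*$ convergence, whereas you invoke the duality identity $\int u_n(t,x)\varphi(x)\,dx=\int u_n^T(y)\,\varphi(X_n(t,T,y))\,dy$, which is precisely the mechanism used later in Proposition~\ref{stabweak}. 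Both arguments are valid; yours is a bit more than needed here but has the advantage of working even when the final data converge only weakly.
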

\begin{proof}Denote by $X_n$ and $X$ the respective regular Lagrangian flows of $b_n$ and $b$.
Then according to Corollary~\ref{c:finalwell} we can take for their compression constants
$L_n=\exp\|\div b_n\|_{L^1(L^\infty)}$ and $L=\exp\|\div b\|_{L^1(L^\infty)}$, which are equi-bounded.
Thus we can apply Theorem~\ref{t:stabrlf}, and $X_n$ converges to $X$ locally in measure
in $\R^N$, uniformly in $s,t$. Moreover, according to the proof of Proposition~\ref{p:jacob}
and because of the strong convergence of $\div b_n$,
we have $JX_n\rightarrow JX$ in $L^1_\loc(\R^N)$, uniformly in $s,t$, with $JX_n$ uniformly bounded.

For (i), the proof is similar to that of Proposition~\ref{p:stabtransport}(i), knowing that $JX_n\rightarrow JX$
in $L^1_\loc(\R^N)$ uniformly in $s,t$.

For proving (ii), because of (i) we need only to prove the equi-integrability of $|u_n(t,\cdot)|^q$
for $n\in\N$ and $t\in[0,T]$. This is obvious because of Proposition~\ref{p:stabtransport}(ii)
and the uniform bound on $JX_n$.
The case $q=\infty$ is obvious since the $L^\infty$ bound ensures the local equi-integrability.
\end{proof}

We can also define the Lagrangian solution to the Cauchy problem for the continuity equation
in which we prescribe the initial data $u(0,\cdot) = u^0$ instead of the final data, via the formula
$u(t,x) = u^0 \big( X(0,t,x) \big) JX (0,t,x)$.
The same results as above, regarding continuity, renormalized equations and stability, hold.

\medskip

Finally, we discuss the situation of weakly convergent initial data.

\begin{propos}[Weakly convergent initial data]\label{stabweak}
With the same assumptions as in Proposition~\ref{p:stabtransportcons}, we have the 
following properties:
\begin{itemize}
\item[(i)] If $u_n^T\rightharpoonup u^T$ weakly in $L^q(\R^N)$ for some $1\leq q<\infty$,
then $u_n(t,x)\rightharpoonup u(t,x)$ weakly in $L^q(\R^N)$, uniformly in $t\in[0,T]$.
If $u_n^T\rightharpoonup u^T$ in $L^\infty(\R^N)-w*$,
then $u_n(t,x)\rightharpoonup u(t,x)$ in $L^\infty(\R^N)-w*$, uniformly in $t\in[0,T]$;
\item[(ii)] The same holds for the transport equation.
\end{itemize}
\end{propos}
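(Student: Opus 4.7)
The strategy is to use the Lagrangian formulas together with the Jacobian change of variable to rewrite the pairing $\int u_n(t,\cdot)\varphi\,dx$ as a pairing between $u_n^T$ and a ``pulled-back'' test function depending on $t$, then to exploit weak convergence of $u_n^T$ against a relatively compact family of test functions. Using the defining property \eqref{e:jacsmooth} of the Jacobian together with the inversion \eqref{e:semiinverse}, for every $\varphi\in L^{q'}(\R^N)$ one obtains
$$\int_{\R^N} u_n(t,x)\varphi(x)\,dx = \int_{\R^N} u_n^T(y)\,\phi_n(t,y)\,dy, \qquad \phi_n(t,y):=\varphi(X_n(t,T,y)),$$
for the continuity equation, and the analogous identity with $\phi_n(t,y)$ replaced by $\tilde\phi_n(t,y):=\varphi(X_n(t,T,y))\,JX_n(t,T,y)$ for the transport equation. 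Write $\phi$ and $\tilde\phi$ for the corresponding expressions with $X$, $JX$ in place of $X_n$, $JX_n$.

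The next step is to establish the strong convergence $\phi_n\to\phi$ in $C([0,T];L^{q'}(\R^N))$ (and $\tilde\phi_n\to\tilde\phi$ in the transport case). This follows by exactly the techniques used in the proofs of Propositions~\ref{p:stabtransport}(ii) and~\ref{p:stabtransportcons}(ii), now applied to the backward flow $X_n(t,T,\cdot)$, which exists with uniformly bounded compression by Corollary~\ref{c:finalwell}: the ingredients are the convergence $X_n(t,T,\cdot)\to X(t,T,\cdot)$ locally in measure uniformly in $t$ (Theorem~\ref{t:stabrlf}), the tail control of Remark~\ref{r:Xsmallinfinity}, approximation of $\varphi$ by compactly supported continuous functions, and, in the transport case, the uniform $L^\infty$ bound and $L^1_\loc$-convergence of $JX_n(t,T,\cdot)\to JX(t,T,\cdot)$ uniformly in $t$ from the proof of Proposition~\ref{p:jacob}.

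With these convergences in hand, I split
$$\int u_n(t,\cdot)\varphi\,dx - \int u(t,\cdot)\varphi\,dx = \int u_n^T\,(\phi_n-\phi)(t,\cdot)\,dy + \int (u_n^T-u^T)\,\phi(t,\cdot)\,dy.$$
The first summand vanishes uniformly in $t$ by H\"older's inequality, using the uniform $L^q$ bound on $u_n^T$ (inherited from weak convergence) and the uniform-in-$t$ $L^{q'}$ convergence $\phi_n\to\phi$. For the second summand, the family $\{\phi(t,\cdot):t\in[0,T]\}$ is a continuous image of the compact interval $[0,T]$ in $L^{q'}(\R^N)$ (the continuity being obtained by the same argument as Proposition~\ref{p:transport}(ii) applied to the backward-flow composition), hence relatively compact there; since $u_n^T-u^T\rightharpoonup 0$ weakly in $L^q$, the pairing tends to zero uniformly in $t$ by the standard fact that weak convergence is uniform on compact subsets of the dual. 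The weak-$*$ statement for $q=\infty$, $q'=1$ follows verbatim, and claim (ii) is proved by the same scheme using $\tilde\phi_n$ in place of $\phi_n$.

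The main obstacle is the borderline case $q=1$ (so $q'=\infty$), where the strong $L^\infty$ convergence $\phi_n\to\phi$ is not available. Here the H\"older estimate for the first summand must be replaced by a Vitali-type argument exploiting the equi-integrability of $\{u_n^T\}$ delivered by the Dunford--Pettis theorem, together with the uniform $L^\infty$ bound on $\phi_n$ and the uniform-in-$t$ local convergence in measure of $\phi_n-\phi$; one splits across $\{|u_n^T|>K\}$ and a complement of finite measure to control tails and mass simultaneously. The second summand is handled likewise, using equi-integrability of $u_n^T-u^T$ together with the $L^1_\loc$-compactness and uniform $L^\infty$ bound of $\{\phi(t,\cdot)\}_t$.
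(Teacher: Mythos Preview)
Your proof is correct and follows essentially the same duality approach as the paper: rewrite $\int u_n(t,\cdot)\varphi$ via the Jacobian change of variable as $\int u_n^T\,\phi_n(t,\cdot)$, then use the strong convergence of the pulled-back test functions $\phi_n\to\phi$ (which the paper obtains by directly invoking Propositions~\ref{p:stabtransport} and~\ref{p:stabtransportcons}, whereas you spell out the underlying mechanism). Your explicit two-term splitting and the compactness argument for uniformity in $t$, as well as the Dunford--Pettis treatment of the endpoint $q=1$, are more detailed than the paper's terse handling but amount to the same strategy.
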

\begin{proof} Take a test function $\varphi\in L^{q'}(\R^N)$. Then for all $n$ and $t\in[0,T]$,
we can consider $\psi_n(t,y)=\varphi\big(X_n(t,T,y)\big)$. We have using \eqref{e:jacsmooth}
\begin{equation}\begin{array}{l}
	\dsp \int_{\R^N}u_n(t,x)\varphi(x)\,dx
	=\int_{\R^N}u_n^T\big(X_n(T,t,x)\big)JX_n(T,t,x)\varphi(x)\,dx\\
	\dsp\hphantom{\int_{\R^N}u_n(t,x)\varphi(x)\,dx}
	=\int_{\R^N}u_n^T\big(X_n(T,t,x)\big)JX_n(T,t,x)\psi_n\big(t,X_n(T,t,x)\big)\,dx\\
	\dsp\hphantom{\int_{\R^N}u_n(t,x)\varphi(x)\,dx}
	=\int_{\R^N}u_n^T(y)\psi_n(t,y)\,dy.
	\label{eq:dual}
	\end{array}
\end{equation}
Since $\varphi\big(X_n(s,t,y)\big)$ is the solution to the transport problem with coefficient $b_n$
with data $\varphi$ (independent of $n$) at time $s$, Proposition~\ref{p:stabtransport} yields that $\varphi \big(X_n(s,t,y)\big)\rightarrow \varphi\big(X(s,t,y)\big)$
in $L^{q'}(\R^N)$ uniformly in $s,t$ if $q'<\infty$, or in $L^\infty - w*$ and $L^1_\loc$
if $q'=\infty$. Thus we have that $\psi_n(t,y)\rightarrow \psi(t,y)=\varphi\big(X(t,T,y)\big)$
in this topology, uniformly in $t$, enabling to pass to the limit in \eqref{eq:dual} and to conclude.
The case of the transport problem works similarly, invoking Proposition~\ref{p:stabtransportcons}.
\end{proof}


\begin{thebibliography}{99}

\bibitem{A} {\sc Ambrosio, L.}
\newblock {\em Transport equation and Cauchy problem for $BV$ vector fields},
\newblock Invent.~Math., {\bf 158} (2004), 227--260.

\bibitem{notes} {\sc Ambrosio, L.}
\newblock {\em Transport equation and Cauchy problem for non-smooth vector fields},
\newblock Calculus of variations and nonlinear partial differential equations, 1--41, 
Lecture Notes in Math., {\bf 1927}, Springer, Berlin, 2008.

\bibitem{notes2} {\sc Ambrosio, L.; Crippa, G.}
\newblock {\em Existence, uniqueness, stability and differentiability properties of the flow 
associated to weakly differentiable vector fields},
\newblock Transport equations and multi-D hyperbolic conservation laws, 3--57, 
Lect.~Notes Unione Mat.~Ital., {\bf 5}, Springer, Berlin, 2008. 

\bibitem{ADM} {\sc Ambrosio, L.; De Lellis, C.; Mal\'y J.}
\newblock {\em On the chain rule for the divergence of vector fields:
applications, partial results, open problems},
\newblock Perspectives in nonlinear partial differential equations, 31--67, Contemp. Math., {\bf 446}, Amer.~Math.~Soc., Providence, RI, 2007.

\bibitem{ALM} {\sc Ambrosio, L.; Lecumberry, M.; Maniglia, S.}
\newblock {\em Lipschitz regularity and approximate differentiability
  of the DiPerna--Lions flow},
\newblock Rend.~Sem.~Mat.~Univ.~Padova, {\bf 114} (2005), 29--50.

\bibitem{BC} {\sc Bouchut, F.; Crippa, G.}
\newblock {\em Uniqueness, renormalization and smooth approximations
for linear transport equations},
\newblock SIAM J.~Math.~Anal., {\bf 38} (2006), 1316--1328.

\bibitem{BCX} {\sc Bouchut, F.; Crippa, G.}
\newblock {\em Equations de transport \`a coefficient dont le gradient est donn\'e par une int\'egrale singuli\`ere},
\newblock  S\'eminaire: Equations aux D\'eriv\'ees Partielles, 2007--2008,  Exp. No. I, S\'emin. Equ. D\'eriv. Partielles, Ecole Polytech., Palaiseau, 2009.

\bibitem{BC2} {\sc Bouchut, F.; Crippa, G.}
\newblock In preparation.

\bibitem{BC3} {\sc Bouchut, F.; Crippa, G.}
\newblock In preparation.

\bibitem{BRm} {\sc Bressan, A.} 
\newblock {\em A lemma and a conjecture on the cost of rearrangements},
\newblock Rend.~Sem.~Mat.~Univ.~Padova, {\bf 110}  (2003), 97--102.

\bibitem{BRc} {\sc Bressan, A.} 
\newblock {\em An ill posed Cauchy problem for a hyperbolic system in 
two space dimensions},
\newblock Rend.~Sem.~Mat.~Univ.~Padova, {\bf 110}  (2003), 103--117. 

\bibitem{ChJ} {\sc Champagnat, N.; Jabin, P.-E.}
\newblock {\em Well-posedness in any dimension for Hamiltonian flows with non BV force terms},
\newblock Comm.~Partial Differential Equations, {\bf 35} (2010), 786--816.

\bibitem{thesis} {\sc Crippa, G.}
\newblock {\em The flow associated to weakly differentiable vector fields},
\newblock Theses of Scuola Normale Superiore di Pisa (New Series), {\bf 12},
Edizioni della Normale, Pisa, 2009.

\bibitem{estimates} {\sc Crippa, G.; De Lellis, C.}
\newblock {\em Estimates and regularity results for the DiPerna-Lions flow},
\newblock  J.~Reine Angew.~Math., {\bf 616} (2008), 15--46.

\bibitem{hyp} {\sc Crippa, G.; De Lellis, C.}
\newblock {\em Regularity and compactness for the DiPerna-Lions flow},
\newblock Hyperbolic Problems: Theory, Numerics, Applications. 
S.~Benzoni-Gavage and D.~Serre eds.~(2008).

\bibitem{D} {\sc De Lellis, C.} 
\newblock {\em Notes on hyperbolic systems 
of conservation laws and transport equations},
\newblock Handbook of differential equations: evolutionary equations. Vol. III, 277--382,
Elsevier/North-Holland, Amsterdam, 2007.

\bibitem{Depauw} {\sc Depauw, N.}
\newblock {\em Non unicit\'e des solutions born\'ees pour un champ de vecteurs BV en dehors d'un hyperplan},
\newblock C. R. Math. Acad. Sci. Paris, {\bf 337} (2003), 249--252.

\bibitem{DL} {\sc DiPerna, R.~J.; Lions, P.~L.}
\newblock {\em Ordinary differential equations, transport theory  and Sobolev spaces},
\newblock Invent.~Math., {\bf 98} (1989), 511--547.

\bibitem{semmes} {\sc Semmes, S.}
\newblock {\em A primer on Hardy spaces, and some remarks on a theorem of Evans and M\"uller},
\newblock Comm.~Partial Differential Equations, {\bf 19} (1994), 277--319. 

\bibitem{Jab} {\sc Jabin, P.-E.}
\newblock {\em Differential equations with singular fields},
\newblock  J.~Math.~Pures Appl., {\bf 94} (2010), 597--621.

\bibitem{Stein} {\sc Stein, E.~M.}
\newblock {\em Singular integrals and differentiability properties of functions}, 
\newblock Princeton University Press, 1970.

\bibitem{Steinbig} {\sc Stein, E.~M.}
\newblock {\em Harmonic analysis: real-variable methods, orthogonality, and oscillatory integrals}, 
\newblock Princeton University Press, 1993.


\end{thebibliography}
\end{document}